\DeclareMathAlphabet{\mathpzc}{OT1}{pzc}{m}{it}
\providecommand\@dotsep{5}
\renewcommand{\listoftodos}[1][\@todonotes@todolistname]{%
  \@starttoc{tdo}{#1}}
\crefname{equation}{equation}{equations}
\theoremstyle{plain}
\newtheorem{Thm}{Theorem}[section]
\newtheorem{Cor}{Corollary}[section]
\newtheorem{Prop}{Proposition}[section]
\newtheorem{Lem}{Lemma}[section]
\theoremstyle{definition}
\newtheorem{Remark}{Remark}[section]
\newtheorem*{rep@theorem}{\rep@title} \newcommand{\newreptheorem}[2]{%
\newenvironment{rep#1}[1]{%
\def\rep@title{\bf #2 \ref{##1} }%
\begin{rep@theorem} }%
{\end{rep@theorem} } }
\numberwithin{equation}{section}
\newcommand{\Card}[1]{\left\vert #1\right\vert} 
\newcommand{\Places}{\mathcal{P}} 
\DeclareMathOperator{\zfun}{\zeta} 
\DeclareMathOperator{\zint}{\mathcal{Z}} 
\DeclareMathOperator{\Lfun}{\mathcal{L}} 
\newcommand{\st}{\operatorname{\mathfrak{st}}} 
\newcommand{\Hecke}{\mathcal{H}} 
\newcommand{\coset}[1]{\left[ #1 \right]}  
\newcommand{\gen}[1]{\left\langle #1 \right\rangle}  
\newcommand{\FNorm}[1]{\left\vert #1 \right\vert} 
\newcommand{\Nm}{\operatorname{Nm}}
\newcommand{\Ker}{\operatorname{Ker}}
\newcommand{\Ind}{\operatorname{Ind}}
\newcommand{\ind}{\operatorname{ind}}
\newcommand{\Hom}{\operatorname{Hom}}
\newcommand{\Gal}{\operatorname{Gal}}
\newcommand{\Aut}{\operatorname{Aut}}
\newcommand{\Tr}{\operatorname{Tr}}
\newcommand{\Res}{\operatorname{Res}}
\newcommand{\C}{\mathbb C}
\newcommand{\A}{\mathbb{A}}
\newcommand{\R}{\mathbb{R}}
\newcommand{\N}{\mathbb{N}}
\newcommand{\M}{\mathcal{M}}
\newcommand{\mO}{\mathcal{O}}
\newcommand{\bk}[1]{\left(#1\right)} 
\newcommand{\bm}{\begin{multline*}}
\newcommand{\tu}{\end  {multline*}}
\DeclareMathOperator{\Id}{\mathbbm{1}} 
\newcommand{\Ga}{\mathbb{G}_a} 
\newcommand{\Gm}{\mathbb{G}_m} 
\newcommand{\Eisen}{\mathcal{E}}
\DeclareMathOperator{\unif}{\varpi} 
\newcommand{\modf}[1]{\mathcal{\delta}_{#1}} 
\newcommand{\Sch}[1]{\operatorname{\mathcal{S}}\left( #1 \right)} 
\newcommand{\meas}{\lambda} 
\newcommand{\less}{<}
\newcommand{\more}{>}
\newcommand{\conv}{*}
\renewcommand{\check}[1]{#1 ^{\vee}} 
\DeclareMathOperator{\Real}{\mathfrak{Re}} 
\newcommand{\piece}[1]{\left\{\begin{matrix} #1 \end{matrix}\right.} 
\newcommand{\set}[1]{\left\{ #1 \right\}} 
\newcommand{\mvert}{\mathrel{}:\mathrel{}} 
\newcommand{\res}[1]{\vert_{#1}}
\newcommand{\suml}{\sum\limits}
\newcommand{\prodl}{\, \prod\limits}
\newcommand{\intl}{\int\limits}
\newcommand{\ldual}[1]{{^L}#1}
\newcommand{\rmod}{/}
\newcommand{\lmod}{\backslash}
\newcommand{\Stab}{\operatorname{Stab}}
\newcommand{\shortexact}[3]{\set{1}\rightarrow #1\rightarrow #2 \rightarrow #3 \rightarrow \set{1}}
\newcommand{\cohom}[1]{\operatorname{H}^{#1}}
\newcommand{\placestimes}{\displaystyle\operatorname*{\otimes}_{\nu\in\Places}}
\def\imod#1{\allowbreak\mkern10mu({\operator@font mod}\,\,#1)}
\title[New-way integrals for $\Lfun$-functions of cuspidal representations of $G_2$]{A Family of New-way Integrals for the Standard $\mathcal{L}$-function of Cuspidal Representations of the Exceptional Group of type $G_2$}
	\author{Avner Segal}
\address{School of Mathematics, Ben Gurion University of the Negev, POB 653, Be'er Sheva 84105, Israel}
\email{avners@math.bgu.ac.il}
\renewcommand\section{\@startsection{section}{1}{\z@}%
                                  {-3.5ex \@plus -1ex \@minus-.2ex}%
                                  {2.3ex \@plus.2ex}%
                                  {\center\normalfont\large\bfseries}}
\newcounter{qcounter}
\begin{document}
\begin{abstract}
Let $\Lfun^{S}\bk{s,\pi,\chi,\st}$ be a standard twisted partial $\Lfun$-function of degree $7$ of the cuspidal automorphic representation $\pi$ of the exceptional group of type $G_2$. In this paper we construct a family of Rankin-Selberg integrals representing this $\Lfun$-function.
As an application, we prove that the representations attaining certain prescribed poles are exactly the representations attained by $\theta$-lift from a group of finite type.
\end{abstract}

\maketitle

\begin{center}
Mathematics Subject Classification: 22E55 (11F27 11F70 22E50)
\end{center}

\tableofcontents



\section{Introduction}

$\Lfun$-functions are one of the central objects of modern number theory, representation theory, the theory of automorphic forms and other areas of mathematics.
They are used in order to achieve algebraic and arithmetic information by means of analytic inquiries.
The use of $\Lfun$-functions in number theory goes back to Dirichlet who used them in 1837 \cite{Dirichlet} to prove  that any arithmetic progression, with coprime coefficients, contains infinitely many prime numbers.


Let $G$ be a reductive group defined over a number field $F$.
Given a non-Archimedean place $\nu$ of a number field $F$ and an unramified representation $\pi_\nu$ of $G\bk{F_\nu}$, there is a semisimple conjugacy class $t_{\pi_\nu}\in\ldual{G}$, called the Satake parameter of $\pi_\nu$.
For a finite dimensional representation $\rho$ of $\ldual{G}$ the local $\Lfun$-factor is defined by
\[
\Lfun\bk{s,\pi_\nu,\rho}=\frac{1}{\det\bk{\Id-\rho\bk{t_{\pi_\nu}}q_\nu^{-s}}} ,
\]
where $q_\nu$ is the cardinality of the residue field of $F_\nu$.

Given an irreducible automorphic representation $\pi=\otimes_{\nu}\pi_\nu$ of $G\bk{\A_F}$ and a finite set of places $S$, such that $\pi_\nu$ is unramified for $\nu\notin{S}$, the global partial $\Lfun$-function is defined by
\[
\Lfun^S\bk{s,\pi,\rho}=\prod_{\nu\notin{S}} \Lfun\bk{s,\pi_\nu,\rho} \ .
\]
Langlands conjectured that this product converges for $Re(s) >> 0$ and admits a meromorphic continuation to the whole complex plane.
The most effective way known to prove this conjecture, for various cases, is by attaching to $\Lfun^S\bk{s,\pi,\rho}$ an integral representation with convenient analytic properties.
One method of doing this is the Rankin-Selberg method.
For a comprehensive survey of the Rankin-Selberg method consult \cite{MR2192819}.



Let $\pi$ be an irreducible cuspidal representation of the exceptional group $G_2\bk{\A_F}$ and let $\chi$ be a Hecke character of $GL_1\bk{\A_F}$.
The Langlands dual group of $G_2$ is isomorphic to $G_2\bk{\C}$ and the Langlands dual group of $GL_1$ is isomorphic to $\C^\times$.
We denote by $\st$ the irreducible seven-dimensional complex representation of $G_2\bk{\C}\times \C^\times$.
Our {\bf main objective} is to attach an integral representation for the partial $\Lfun$-function $\Lfun\bk{s,\pi,\chi,\st} = \Lfun\bk{s,\pi\boxtimes\chi,\st}$ and to prove its meromorphic continuation.

In this paper we consider a family of zeta integrals $\zint_E\bk{\chi,s,\varphi,f}$ parametrized by \'etale cubic algebras $E$ over $F$.
\'Etale cubic algebras over $F$ also parametrize the non-degenerate Fourier coefficients along the Heisenberg unipotent subgroup $U$ of $G_2$.
For any such algebra $E$ we denote by $\Psi_E$ the corresponding non-degenerate complex character of $U$.
Isomorphism classes of quasi-split forms of $D_4$ are also parametrized by \'etale cubic algebras over $F$.
For any such $E$ we denote by $H_E$ the corresponding quasi-split form of $D_4$.

In \cite[Theorem 3.1]{MR2181091} it is shown that any cuspidal representation of $G_2$ supports at least one non-degenerate Fourier coefficient along this unipotent subgroup.
The integral $\zint_E\bk{\chi,s,\varphi,f}$ involves a normalized degenerate Eisenstein series $\Eisen_E^\ast\bk{\chi,s,f,g}$ associated to the induced representation from the Heisenberg parabolic subgroup $P_E$ of $H_E$.

The main result of this paper is

\begin{reptheorem}{Thm:Main theorem}
\it
Let $\pi$ be an irreducible cuspidal representation of $G_2\bk{\A_F}$ supporting the Fourier coefficient corresponding to $E$ and let $\varphi=\displaystyle \bigotimes_{\nu\in\Places} \varphi_\nu\in\pi$ and $f_s= \displaystyle \bigotimes_{\nu\in\Places} f_{\nu}\in I_{P_E}\bk{\chi,s}$ be factorizable data.
Fix a finite set of places $S\subset\Places$ so that for $\nu\notin S$ all data is unramified.
Let
\begin{equation}
\zint_E \bk{\chi, s, \varphi, f} = \intl_{G_2\bk{F}\lmod G_2\bk{\A_F}} \varphi\bk{g} \Eisen_E^\ast\bk{\chi,s,f,g} dg
\end{equation}
then
\begin{equation}
\zint_E \bk{\chi, s, \varphi, f} = \Lfun^S\bk{s+\frac{1}{2},\pi,\chi,\st} d_S\bk{\chi, s,\Psi_E,\varphi_S,f_S} .
\end{equation}
Moreover, for any $s_0$ there exist vectors $\varphi_S$, $f_S$ such that $d_S\bk{\chi, s,\Psi_E,\varphi_S,f_S}$ is analytic in a neighborhood of $s_0$ and $d_S\bk{\chi, s_0,\Psi_E,\varphi_S,f_S}\neq 0$.

In particular, the family of twisted partial $\Lfun$-function $\Lfun^S\bk{s,\pi,\chi,\st}$ admits a meromorphic continuation to the whole complex plane.
\end{reptheorem}

This result generalizes \cite[theorem 3.1]{MR3284482} where we constructed an integral representation for the standard (untwisted) $\Lfun$-function supporting the Fourier coefficient associated with the split cubic algebra over $F$.


The foremost challenge in the proof of \cref{Thm:Main theorem} is that in the unfolded integral appears a non-unique model, namely a functional from $\Hom_{U\bk{\A_F}}\bk{\pi,\C_{\Psi_E}}$ which for many $\pi$-s is not a one dimensional space.
Usually such integrals, called \emph{new way integrals} after \cite{MR965059}, are not factorizable.
In \cite{MR965059} I. Piatetski-Shapiro and S. Rallis suggested a method to prove factorizability of such integrals.
The remarkable mechanism suggested there relies on the fact that the local integral at an unramified place $\nu$ will be equal to the local $\Lfun$-factor {\bf for any} functional from $\Hom_{U\bk{F_\nu}}\bk{\pi_\nu,\C_{\Psi_E}}$.

As a byproduct of the proof of \cref{Thm:Main theorem} we provide a parametrization of the $G_2\bk{F}$ orbits in $P_E\bk{F}\lmod H_E\bk{F}$.

The proof of \cref{Thm:Main theorem} is mostly unified for all \'etale cubic algebras $E$ over $F$ and the proof splits up for the different kinds of cubic algebras only at a very late stage, namely \cref{Subsec: Convolution} and \cref{Sec: Fourier Transform}, where the arithmetical difference between the different \'etale algebras become critical.

In the proof of the local unramified identity we use the same strategy as in \cite{MR3284482} and use an approximation to the generating function of the local $\Lfun$-factor instead of the generating function itself.
In \cref{Sec:Unramified Computation} we strengthen the results of \cite{MR3284482} concerning this approximation.

As an application of the main theorem we deduce the following result characterizing theta lifts from groups of finite type as the representations having a certain pole.
Let $E$ be an \'etale cubic algebra which is not a non-Galois field extension. Let $\chi_E$ be the character attached to $E\rmod F$ by class field theory and let $S_E=\Aut_F\bk{E}$.
We denote by $\theta_{S_E}$ the theta correspondence for the dual pair $S_E\times G_2$ in $H_E\rtimes S_E$.

\begin{reptheorem}{Conj: Theta lift}
\it
The following are equivalent
\begin{enumerate}
\item
$\Lfun^S\bk{s,\pi,\chi_E,\st}$ admits a pole  at $s=2$ of order $2$ if $E=F\times F\times F$ and of order $1$ otherwise.
\item
$\theta_{S_E}(\pi)\neq 0$.
\end{enumerate}

\end{reptheorem}

Other Rankin-Selberg integrals representing the standard $\Lfun$-function of cuspidal representations of $G_2$ were introduced in \cite{MR1203229} for generic representations and in \cite{UnPublishedGinzburg} for any cuspidal representation of $G_2$.
The later is done using a doubling construction showing that the set of poles of $\Lfun\bk{s,\pi,\chi,\st}$ are contained in the set of poles of a degenerate Eisenstein series of the exceptional group of type $E_8$.
D. Ginzburg and J. Hundley further conjectured \cite[Conjecture 1]{UnPublishedGinzburg} for $\Real\bk{s}\geq 0$ the orders of the poles are bounded by $2$.

{\bf Acknowledgments.} 
I would first want to thank Dihua Jiang for suggesting the family of zeta integral for the L-functions.
I would like to thank Wee Teck Gan for suggesting the approach for the proof of \cref{Thm: G2-orbits}.
It is of great pleasure to thank Nadya Gurevich for her guidance, for endless helpful discussions and mostly for introducing me into the wonderful world of automorphic forms.

This work constitutes of parts from the Ph.D. thesis of the author.

The author was partially supported by grant 1691/10 from the Israel Science Foundation.

\section{Preliminaries}
\label{Sec: Preliminaries}
Let $F$ be a number field and let $\Places$ be its set of places.
For any $\nu\in\Places$ we denote by $F_\nu$ the completion of $F$ at $\nu$.
If $\nu<\infty$ we denote by $\mO_\nu$ its ring of integers, by $\unif_\nu$ a uniformizer of $F_\nu$ and by $q_\nu$ the cardinality of the residue field of $F_\nu$.
We also denote by $\A_F=\A$ the ring of adeles of $F$.

\subsection{The Group $G_2$}
Let $G$ be the simple split reductive adjoint and simply connected group of type $G_2$ defined over $F$.
Let $B$ be a Borel subgroup of $G$ and $T$ a maximal torus in $B$.
Let $\alpha$ and $\beta$ be the short and long simple roots of $G$ with respect to $\bk{B,T}$.
Let $W$ be the Weyl group of $G$ with respect to $\bk{B,T}$.
The Dynkin diagram of $G$ is
\[
\xygraph{
!{<0cm,0cm>;<0cm,1cm>:<1cm,0cm>::}
!{(0.4,-1)}*{\alpha}="label1"
!{(0,-1)}*{\bigcirc}="1"
!{(0,-0.1)}="c"
!{(0.2,  0.1)}="c1"
!{(-0.2,0.1)}="c2"
!{(0.4,1)}*{\beta}="label2"
!{(0,1)}*{\bigcirc}="2"
"1"-@3"2" "c1"-"c" "c"-"c2" 
} .
\]
The set of positive roots of $G$ is
\[
\Phi^{+} = \set{\alpha, \beta, \alpha+\beta, 2\alpha+\beta, 3\alpha+\beta, 3\alpha+2\beta} .
\]
The fundamental weights of $G$ are denoted by
\[
\omega_1=2\alpha+\beta,\quad \omega_2=3\alpha+2\beta .
\]
For any simple root $\gamma$ let $w_\gamma\in W$ be the simple reflection with respect to it.
For any root $\gamma$ we fix a one-parametric subgroup $x_\gamma:\Ga\to G$.
Also, let $h_\gamma:\Gm\to T$ be the coroot subgroup such that for any root $\epsilon$
\[
\epsilon\bk{h_\gamma\bk{t}} = t^{\gen{\epsilon,\check{\gamma}}} .
\]
The group $G$ contains an Heisenberg maximal parabolic subgroup $P=M\cdot U$.
The Levi subgroup $M$ is isomorphic to $GL_2$ and is generated by the simple root $\alpha$, while $U$ is a five-dimensional Heisenberg group.
Finally, we let $\st:G\hookrightarrow GL_7$ be the standard 7-dimensional embedding.

\subsection{Twisted Partial $\Lfun$-functions}
The dual Langlands group $\ldual{G}$ of $G$ is isomorphic to $G_2\bk{\C}$.

Let $\pi=\placestimes \pi_\nu$ be an irreducible cuspidal representation of $G\bk{\A}$ and let $\chi=\placestimes\chi_\nu:F^\times\lmod \A^\times\to \C^\times$ be a Hecke character, both unramified outside of a finite subset $S\subset\Places$.
For $\nu\notin S$ we denote its Satake parameter by $t_{\pi_\nu}$.
We let
\[
\Lfun^S\bk{s,\pi,\chi,\st} = \prodl_{\nu\notin S} \frac{1}{\det\bk{I-\st\bk{t_{\pi_\nu}} \chi\bk{\unif_\nu} q_\nu^{-s} }} .
\]
This product converges for $\Real\bk{s}>>0$ to an analytic function.
In this paper we prove that $\Lfun^S\bk{s,\pi,\chi,\st}$ admits meromorphic continuation to the whole complex plane.


\begin{Remark}
The dual Langlands group $\ldual{GL_1}$ of $GL_1$ is isomorphic to $GL_1\bk{\C}=\C^\times$.
By abuse of notations by $\st$ we also denote the irreducible seven-dimensional complex representation of $G_2\bk{\C}\times \C^\times$.
Since for $\nu\notin S$ the Satake parameter of $\chi_\nu$ is $\chi_\nu\bk{\unif_\nu}\in \C^\times$ it holds that
\[
\Lfun^S\bk{s,\pi,\chi,\st} = \prodl_{\nu\notin S} \frac{1}{\det\bk{I-\st\bk{t_{\pi_\nu}, \chi\bk{\unif_\nu}} q_\nu^{-s} }} .
\]
\end{Remark}

\subsection{\'Etale Cubic Algebras Over $F$}
Let $E$ be an \'etale cubic algebra over $F$, then $E$ is one of the following:
\begin{enumerate}
\item\underline{$E=F\times F\times F$:}
This is called the split cubic algebra over $F$.
In this case, $\Aut_F\bk{E}\equiv S_3$.
For $\bk{a,b,c}\in F\times F\times F$ let $\Nm_{E\rmod F}\bk{a,b,c}=abc$.

\item\underline{$E=F\times K$:}
Here $K$ is a quadratic (and hence Galois) extension of $F$.
Furthermore, $\Aut_F\bk{E}=\Gal\bk{K\rmod F}=\set{1,\sigma}$.
For $\bk{a,b}\in F\times K$ let $\Nm_{E\rmod F}\bk{a,b}= a\Nm_{K\rmod F}\bk{b} = abb^\sigma$.

\item\underline{$E$ is a Cubic Galois Field Extension:}
Here we assume that $E$ is a cubic Galois extension of $F$.
In this case, $\Aut_F\bk{E}=\Gal\bk{E\rmod F}=\set{1,\sigma,\sigma^2}$
For $a\in E$ let $\Nm_{E\rmod F}\bk{a}= aa^\sigma a^{\sigma^2}$.

\item \underline{$E$ is a Cubic non-Galois Field Extension:}
In this case, let $L$ be the Galois closure of $E$ over $F$, this is a Sextic Galois extension with $\Gal\bk{L\rmod F}=\gen{\sigma,\ \tau\mvert \sigma^3=1,\ \tau^2=1}$.
Note that $L$ is also a Galois extension of $E$.
We achieve the following tower of extensions
\[
\xymatrix{
&L \ar@{-}[ld]_{\gen{\tau}} \ar@{-}[d] \ar@{-}[rd] \ar@{-}[rrd]^{\gen{\sigma}} \\
E \ar@{-}[rd] \ar@/^/[r]^{\sigma}& E_{\sigma} \ar@{-}[d] \ar@/^/[r]^{\sigma} & E_{\sigma^2} \ar@{-}[ld] \ar@/^1pc/[ll]^-(0.7){\sigma}|!{[l];[dl]}\hole & K \ar@{-}[lld]^{\gen{\tau}} \\
& F }
\]
Where $K=L^{\gen{\sigma}}$ and $E$, $E_\sigma=L^{\gen{\sigma\tau\sigma^2}}$ and $E_{\sigma^2}=L^{\gen{\sigma^2\tau\sigma}}$ are the $\sigma$-conjugates of $E$ in $L$.
For $a\in E$ let $\Nm_{E\rmod F}\bk{a}= aa^\sigma a^{\sigma^2}$.
\end{enumerate}

\begin{Remark}
We call the first three types \textbf{Galois \'etale cubic algebras over $F$}.
\end{Remark}

For reasons that will become clear later in this section, we seek a certain realization of $E$ as a quotient of the polynomial ring in two variables $F\coset{x,y}$.
Let $E$ be an \'etale cubic algebra, then one can choose a homogeneous cubic polynomial $p_E\in F\coset{x,y}$ so that $F\coset{x}\rmod \bk{p_E\bk{x,1}}\cong E$.
Note that $p_E$ splits in a finite extension of $F$ into
\[
p_E\bk{x,y} = \bk{x-ay} \bk{x-by} \bk{x-cy} ,
\]
So one may choose $p_E$ so that
\[
p_E\bk{x,y}=x^3-T_{\bk{a,b,c}}x^2y+D_{\bk{a,b,c}}xy^2-N_{\bk{a,b,c}}y^3 ,
\]
where  $\bk{a,b,c}\in \overline{F}\times \overline{F}\times \overline{F}$ and
\begin{itemize}
\item $T_{\bk{a,b,c}}=a+b+c\in F$.
\item $D_{\bk{a,b,c}}=ab+bc+ca\in F$.
\item $N_{\bk{a,b,c}}=abc\in F$.
\end{itemize}

According to \cite[Proposition 2.2]{MR776169}, for any $E$ one can choose $\bk{a,b,c}$ satisfying the following condition:
\begin{list}{\underline{\textbf{(CT)}:}}{\usecounter{qcounter}} 
\item
$\bk{a,b,c}$ is one of the following:
\begin{itemize}

\item \underline{$E=F\times F\times F$:} $\bk{a,b,c}=\bk{1,-1,0}$.

\item \underline{$E=F\times K$, $K$ is a field:} Choose $\theta\in K$ such that $K=F\coset{\theta}$ and $\theta+\theta^\sigma=0$. We choose $\bk{a,b,c}=\bk{0,\theta,\theta^\sigma}\in K\times K\times K$.

\item \underline{$E$ is a field:} Choose $\theta\in E$ such that $E=F\coset{\theta}$ and $\theta+\theta^\sigma+\theta^{\sigma^2}=0$. We choose $\bk{a,b,c}=\bk{\theta,\theta^\sigma,\theta^{\sigma^2}}\in E\times E\times E$.
\end{itemize}

\end{list}
Here \textbf{CT} stands for \emph{cubic algebra triplets}.
We henceforth assume that $\bk{a,b,c}$ satisfy {\bf (CT)}.
In particular, we assume that $T_{\bk{a,b,c}}=0$.

\subsection{Characters of $U$}
In this subsection, we will construct for any $E$ a character $\Psi_E\in \Hom\bk{U\bk{\A},\C^\times}$.
We parametrize the elements of $U$ by
\[
u\bk{r_1,r_2,r_3,r_4,r_5} := x_{\beta}\bk{r_1} x_{\alpha+\beta}\bk{r_2} x_{2\alpha+\beta}\bk{r_3} x_{3\alpha+\beta}\bk{r_4} x_{3\alpha+2\beta}\bk{r_5} .
\]
The natural action of $M$ on $U$ induces an action on $\Hom\bk{U,\Ga}$.
It is shown in \cite{MR1637485} that for any field $L$ of characteristic 0, the $M\bk{L}$-orbits in $\Hom\bk{U\bk{L},L}$ are naturally parametrized by isomorphism classes of cubic $L$-algebras.

For the rest of this paper, fix a non-trivial additive complex unitary character $\psi=\placestimes \psi_\nu$ of $F\lmod \A$.
This gives rise to the correspondence
\[
\set{\begin{matrix} \text{Isomorphism classes of} \\ \text{cubic algebras over $F$} \end{matrix}} 
\longleftrightarrow \Hom\bk{U\bk{\A},\C^\times}\rmod M\bk{F} .
\]
In particular, we call $\Psi\in \Hom\bk{U\bk{\A},\C^\times}\rmod M\bk{F}$ \emph{non-degenerate} if it corresponds to an \'etale cubic algebra over $F$.

For any $E$ we will now attach a representative $\Psi_E$ of the corresponding $M\bk{F}$-orbit in $\Hom\bk{U\bk{\A},\C^\times}$.
We let
\begin{equation}
\Psi_E\bk{u\bk{r_1,r_2,r_3,r_4,r_5}} := \psi\bk{r_4-T_{\bk{a,b,c}}r_3+D_{\bk{a,b,c}}r_2-N_{\bk{a,b,c}}r_1} .
\end{equation}






In \cite{MR3284482} we use a distinguished representative of the class of complex characters associated with the split cubic algebra over $F$, we denote it by $\Psi_s$.
In \cref{Section: G2-orbits} we show the relation between $\Psi_s$ and $\Psi_{F\times F\times F}$.

\subsection{Fourier Coefficients and Wave Front of a Representation}
We denote by $\mathcal{A}\bk{G}$ the space of automorphic forms on $G\bk{\A}$.
For any $\varphi\in\mathcal{A}\bk{G}$ and any $\Psi\in \Hom\bk{U\bk{\A},\C^\times}$ we let
\[
L_\Psi\bk{\varphi}\bk{g} = \intl_{U\bk{F}\lmod U\bk{\A}} \varphi\bk{ug} \overline{\Psi\bk{u}} du .
\]
For any $g\in G\bk{\A}$ this defines a functional $L_\Psi\bk{\cdot}\bk{g}\in \Hom_{U\bk{\A}}\bk{\mathcal{A}\bk{G},\C_\Psi}$.
For an automorphic representation $\pi$ in $\mathcal{A}\bk{G}$, we say that $\pi$ supports the $\bk{U,\Psi}$-Fourier coefficient if there exist $\varphi\in\pi$ so that $L_\Psi\bk{\varphi}\not\equiv 0$.
It was shown in \cite{MR2181091} that for any cuspidal representation $\pi$ in $\mathcal{A}\bk{G}$, there exists an \'etale cubic algebra $E$ so that $\pi$ supports the Fourier coefficient corresponding to $E$.
Conversely, it is shown in \cite{MR1918673} that for any \'etale cubic algebra $E$ there exists a cuspidal representation $\pi$ that supports the $\bk{U,\Psi_E}$-Fourier coefficient.

\subsection{Local Fourier Transform}
For a finite $\nu\in\Places$, let $K_\nu$ denote the maximal compact subgroup $G\bk{\mO_\nu}$ of $G\bk{F_\nu}$.
Given a complex character $\Psi$ of $U\bk{F_\nu}$ let
\[
\M_\Psi = \set{f:G\bk{F_\nu}\to \C\mvert f\bk{ugk} = \overline{\Psi\bk{u}}f\bk{g}\ \forall u\in U\bk{F_\nu},\ k\in K_\nu} .
\]

We let $\Hecke_\nu=\Hecke\bk{G\bk{F_\nu},K_\nu}$ denote the spherical Hecke algebra of $G\bk{F_\nu}$ with respect to $K_\nu$.
For $f\in \Hecke_\nu$ define its $\Psi$-Fourier transform $f^\Psi$ by
\[
f^\Psi\bk{g} = \intl_{U\bk{F_\nu}} f\bk{ug} \Psi\bk{u} du .
\]
Obviously $f^\Psi\in \M_\Psi$.

\begin{Remark}
For any $f\in\M_{\Psi}$, $f$ is determined by its values on $M\bk{F}\cap B\bk{F}$.
For any $g=h_\alpha(t_1)h_\beta(t_2) x_{\alpha}\bk{d}\in M\bk{F}\cap B\bk{F}$ we may also write $g=x_{\alpha}\bk{p} h_\alpha(t_1)h_\beta(t_2)$ with $p=\frac{d t_1^2}{t_2}$. 
The two different presentations will prove useful later.
\end{Remark}

The following lemma will be useful while evaluating functions in $\M_{\Psi_E}$, the proof is analogues to \cite[Lemma A.1]{MR3284482}.

\begin{Lem}
\label{Lem: Conditions on m}
Let $f\in \M_{\Psi_E}$, then for $f\bk{h_\alpha(t_1)h_\beta(t_2) x_{\alpha}\bk{d}}=0$ unless the following holds:
\[
N_E\frac{t_2^2}{t_1^3}+D_E\frac{dt_2}{t_1}+\frac{d^3t_1^3}{t_2}, D_E\frac{t_2}{t_1}+\frac{3d^2t_1^3}{2t_2}, \frac{3dt_1^3}{t_2}, \frac{t_1^3}{t_2}\in\mO .
\]

\end{Lem}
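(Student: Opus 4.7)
The plan is to exploit the equivariance of $f \in \M_{\Psi_E}$ together with the fact that $g := h_\alpha(t_1)h_\beta(t_2)x_\alpha(d) \in P(F_\nu)$ normalises $U$. If $u \in U(F_\nu)$ satisfies $g^{-1}ug \in K_\nu$, then
\begin{equation*}
f(g) = f(u\cdot u^{-1}g) = \overline{\Psi_E(u)}\,f(u^{-1}g) = \overline{\Psi_E(u)}\,f(g),
\end{equation*}
so $f(g)=0$ unless $\Psi_E(u)=1$. I would produce, for each of the four listed expressions $\Lambda_i$, a one-parameter family $u_i(c)\in U(F_\nu)$ that lands in $U(\mO_\nu)$ upon conjugation by $g^{-1}$ when $c\in\mO_\nu$, and for which $\Psi_E(u_i(c))=\psi(\Lambda_i\cdot c)$; triviality of $\psi$ on $\mO_\nu$ then forces $\Lambda_i\in\mO_\nu$.

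First I would record the effect of $g_0 := h_\alpha(t_1)h_\beta(t_2)$: for each positive root $\gamma$ in $U$, $g_0^{-1}x_\gamma(r)g_0 = x_\gamma(t_1^{-\langle\gamma,\alpha^\vee\rangle}t_2^{-\langle\gamma,\beta^\vee\rangle}r)$, which via the $G_2$ Cartan pairings gives the five scaling factors $t_1^3 t_2^{-2},\ t_1 t_2^{-1},\ t_1^{-1},\ t_1^{-3}t_2,\ t_2^{-1}$ for $\gamma=\beta,\,\alpha+\beta,\,2\alpha+\beta,\,3\alpha+\beta,\,3\alpha+2\beta$ respectively. Next, using the Chevalley commutator formula for $x_\alpha(-d)\,x_\gamma(s)\,x_\alpha(d) = \prod_{k\ge 0}x_{\gamma+k\alpha}(N_{k,\gamma}\,d^k s)$, I would obtain $g^{-1}u(r_1,\ldots,r_5)g = u(r_1',\ldots,r_5')$ as an explicit lower-triangular (in the $\alpha$-height filtration) transformation, where each $r_i'$ is linear in the $r_j$ with coefficients in $F_\nu[t_1^{\pm 1},t_2^{\pm 1},d]$, together with a quadratic correction in $r_5'$ coming from the $[x_\beta,x_{3\alpha+\beta}]$-commutator. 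The $G_2$ structure constants $N_{k,\gamma}$ supply the factor $3$ appearing in the conditions of the lemma.

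The four conditions then correspond to four triangular choices. The simplest, $u = x_{3\alpha+\beta}(c)$, is preserved by no nontrivial $x_\alpha$-commutator and has $\Psi_E(u) = \psi(c)$, with integrality requiring $c \in \tfrac{t_1^3}{t_2}\mO_\nu$, giving $\tfrac{t_1^3}{t_2}\in\mO$. The second is a pair in $x_{2\alpha+\beta}\cdot x_{3\alpha+\beta}$ arranged so that the $r_3'$-component vanishes and only the residual $r_4'$-contribution $\tfrac{3dt_1^3}{t_2}c$ survives in $\Psi_E$. The third is a triple in $x_{\alpha+\beta}\cdot x_{2\alpha+\beta}\cdot x_{3\alpha+\beta}$ arranged so that $r_2'$ and $r_3'$ vanish, producing the character $\psi((D_E\tfrac{t_2}{t_1}+\tfrac{3d^2 t_1^3}{t_2})c)$: the term $D_E\tfrac{t_2}{t_1}$ comes from the $D_E r_2$-term in $\Psi_E$ (recalling $T_{(a,b,c)} = 0$ from \textbf{(CT)}), the term $\tfrac{3d^2t_1^3}{t_2}$ from the double $x_\alpha(d)$-commutator. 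The fourth is an analogous quadruple also involving $x_\beta(c)$, delivering $\psi((N_E\tfrac{t_2^2}{t_1^3}+D_E\tfrac{dt_2}{t_1}+\tfrac{d^3 t_1^3}{t_2})c)$. There is no fifth condition: $r_5$ does not appear in $\Psi_E$, so $x_{3\alpha+2\beta}$ imposes no constraint, and \textbf{(CT)} eliminates the $r_3$-term.

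The main obstacle is the careful bookkeeping of the $G_2$ Chevalley structure constants --- the triple bond forces factors of $2$ and $3$ in the commutators involving $x_\alpha$, and these are exactly what produce the coefficients $3d,\ 3d^2,\ d^3$ in the stated conditions --- together with handling the quadratic correction to $r_5'$. The computation is entirely parallel to \cite[Lemma A.1]{MR3284482}, which treats the split case; the only change is that $(T_E,D_E,N_E)$ replace the split-specific constants in $\Psi_E$, so the $G_2$-side of the calculation is essentially unchanged.
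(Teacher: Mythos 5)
Your proposal is correct and is exactly the argument the paper intends: the paper itself gives no proof of \cref{Lem: Conditions on m}, deferring to the analogous Lemma A.1 of \cite{MR3284482}, and that argument is precisely the one you describe — $f(g)$ vanishes unless $\Psi_E$ is trivial on $U(F_\nu)\cap gK_\nu g^{-1}=gU(\mO_\nu)g^{-1}$, and conjugating the integral one-parameter subgroups $x_\gamma(\mO_\nu)$ for $\gamma=\beta,\alpha+\beta,2\alpha+\beta,3\alpha+\beta$ through $x_\alpha(d)$ and the torus element produces, via the $G_2$ Chevalley commutator constants, exactly the four stated integrality conditions (with $3\alpha+2\beta$ imposing nothing since $r_5$ is absent from $\Psi_E$ and $T_{(a,b,c)}=0$ killing the $r_3$-term). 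The only point to finish carefully is the bookkeeping of the structure constants, including the normalization responsible for the $\tfrac{1}{2}$ in the second condition, which your sketch flags but does not pin down.
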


%
%
%
%



\subsection{Quasi-Split Forms of $D_4$}
We recall the following parametrization of quasi-split forms of $D_4$ over $F$:
\[
\set{\text{Quasi-split forms of $D_4$ over $F$}} \longleftrightarrow \set{\varphi:\Gal\bk{\bar{F}\rmod F}\to\Aut\bk{Dyn\bk{D_4}}},
\]
where $Dyn\bk{D_4}$ is the Dynkin diagram of type $D_4$.
\[
\xygraph{
!{<0cm,0cm>;<0cm,1cm>:<1cm,0cm>::}
!{(0,-1)}*{\bigcirc}="1"
!{(0.4,-1)}*{\alpha_1}="label1"
!{(0,0)}*{\bigcirc}="2"
!{(0.4,0)}*{\alpha_2}="label2"
!{(0,1)}*{\bigcirc}="3"
!{(0.4,1)}*{\alpha_3}="label3"
!{(-1,0)}*{\bigcirc}="4"
!{(-1,0.4)}*{\alpha_4}="label4"
"1"-"2" "2"-"3" "2"-"4"
} .
\]
Since $\Aut\bk{Dyn\bk{D_4}}\cong S_3$ we have
\[
\set{\text{Quasi-split forms of $D_4$ over $F$}} \longleftrightarrow \set{\varphi:\Gal\bk{\bar{F}\rmod F}\to S_3}.
\]
On the other hand, there is a bijection
\[
\set{\varphi:\Gal\bk{\bar{F}\rmod F}\to S_3} \longleftrightarrow \set{\begin{matrix} \text{Isomorphism classes of} \\ \text{\'etale cubic algebras over $F$} \end{matrix}}.
\]
For any cubic algebra $E$ let $S_E=Aut_F(E)$ which is a twisted form of $S_3$. 
An action of $S_E$ on the algebraic group $Spin_8$ determines  
a simply-connected quasi-split form $H_E=D_4^E$ of $D_4$ over $F$. 
We fix a Chevalley-Steinberg system of \'epinglage \cite[4.1.3]{MR756316}
\[
\set{T_E, B_E, x_\gamma:\Ga\rightarrow\bk{H_E}_\gamma,\gamma\in \Phi_{D_4}} \ ,
\]
where $T_E\subset B_E$ is a maximal torus contained in a Borel subgroup (both defined over $F$) and $\Phi_{D_4}$ are the roots of $H_E\otimes \overline{F}\equiv D_4\bk{\overline{F}}$.
For any $\gamma$ in the reduced root system of $H_E$ we denote by $F_\gamma$ the field of definition of $\gamma$.



We now give a more detailed description of $H_E$ for the different kinds of \'etale cubic algebras over $F$ in terms of the action of $\Gal\bk{\overline{F}\rmod F}$ on $H_E\bk{\overline{F}}$.

\begin{enumerate}
\item\underline{$E=F\times F\times F$:}
In this case $H_E$ is the split reductive simply-connected group of type $D_4$ over $F$.
It corresponds to the trivial action of $\Gal\bk{\bar{F}\rmod F}$.

\item\underline{$E=F\times K$:}
In this case we make a choice of one of the roots $\alpha_1,\alpha_3,\alpha_4$ and glue the other two together.
The fact that for each choice of distinct root we get an isomorphic algebraic group is called \emph{triality}.
In what follows we choose $\alpha_1$ to be the distinct root.
This is the case where $E=F\times K$ with $K$ a quadratic (and hence Galois) extension of $F$.
It is enough to define an action of $\Gal\bk{K\rmod F}=\gen{\sigma}$ on $Spin_8\bk{K}$.
This action is determined by
\begin{align*}
\sigma\bk{x_{\alpha_1}\bk{k}}&=x_{\alpha_1}\bk{\sigma\bk{k}} \\
\sigma\bk{x_{\alpha_2}\bk{k}}&=x_{\alpha_2}\bk{\sigma\bk{k}} \\
\sigma\bk{x_{\alpha_3}\bk{k}}&=x_{\alpha_4}\bk{\sigma\bk{k}} \\
\sigma\bk{x_{\alpha_4}\bk{k}}&=x_{\alpha_3}\bk{\sigma\bk{k}} .
\end{align*}

\item\underline{$E$ is a Cubic Galois Field Extension:}
Here we assume that $E$ is a cubic Galois extension of $F$.
It is enough to define an action of $\Gal\bk{E\rmod F}=\gen{\sigma\mvert \sigma^3=1}$ on $Spin_8\bk{E}$.
This action is determined by
\begin{align*}
\sigma\bk{x_{\alpha_2}\bk{e}}&=x_{\alpha_2}\bk{\sigma\bk{e}} \\
\sigma\bk{x_{\alpha_1}\bk{e}}&=x_{\alpha_3}\bk{\sigma\bk{e}} \\
\sigma\bk{x_{\alpha_3}\bk{e}}&=x_{\alpha_4}\bk{\sigma\bk{e}} \\
\sigma\bk{x_{\alpha_4}\bk{e}}&=x_{\alpha_1}\bk{\sigma\bk{e}} .
\end{align*}

\item \underline{$E$ is a Cubic non-Galois Field Extension:}
Here we assume that $E$ is a cubic non-Galois extension of $F$. In order to define $H_E\bk{F}$ we first consider the Galois closure $L$ of $E$ over $F$ as above with $\Gal\bk{L\rmod F}=\gen{\sigma,\ \tau\mvert \sigma^3=1,\ \tau^2=1}$.


The action of $\Gamma_E$ on $Spin_8\bk{L}$ is determined by
\begin{align*}
\tau\bk{x_{\alpha_i}\bk{l}}&=x_{\alpha_i}\bk{\tau\bk{l}} \ \forall i \\
\sigma\bk{x_{\alpha_2}\bk{l}}&=x_{\alpha_2}\bk{\sigma\bk{l}} \\
\sigma\bk{x_{\alpha_1}\bk{l}}&=x_{\alpha_3}\bk{\sigma\bk{l}} \\
\sigma\bk{x_{\alpha_3}\bk{l}}&=x_{\alpha_4}\bk{\sigma\bk{l}} \\
\sigma\bk{x_{\alpha_4}\bk{l}}&=x_{\alpha_1}\bk{\sigma\bk{l}} .
\end{align*}

\end{enumerate}

Also, we have $B_E\cap G=B$.
Furthermore, there exists an Heisenberg parabolic subgroup $P_E=M_E\cdot U_E$ of $H_E$ so that $P_E\cap G = P$, $M_E\cap G=M$ and $U_E\cap G=U$.
In particular
\[
M_E\equiv \set{g\in \Res_{E\rmod F} \operatorname{GL}_2 \mvert \det\bk{g}\in \Gm} .
\]

\subsection{The Degenerate Eisenstein Series}
%
%

Fix a Hecke character $\chi:\A^\times\to \C^\times$.
We consider the normalized induction
\[
I_{P_E}\bk{\chi,s} = \Ind_{P_E}^{H_E} \bk{\chi\circ{\det}_{M_E}} \FNorm{{\det}_{M_E}}^{s+\frac{5}{2}},
\]
where $\det_{M_E}$ is the determinant character associated to the Levi subgroup $M_E$.
Note that for the modulus character of $P_E$ it holds that $\modf{P_E}\res{M_E}=\FNorm{\det_{M_E}}^5$.

For any $K$-finite standard section $f_s\in I_{P_E}\bk{\chi,s}$ we define the following degenerate Eisenstein series
\begin{equation}
\Eisen_E\bk{\chi, f_s, s, g} = \suml_{\gamma\in P_E\bk{F}\lmod H_E\bk{F}} f_s\bk{\gamma g} .
\end{equation}
This series converges for $\Real\bk{s}>>0$ and admits a meromorphic continuation to the whole complex plane. We normalize the Eisenstein series as follows
\[
\Eisen_E^\ast\bk{\chi, f_s, s, g}=j_E\bk{\chi,s} \Eisen_E\bk{\chi, f_s, s, g},
\]
where
\[
j_E\bk{\chi,s}= \piece{\Lfun_F\bk{s+\frac{5}{2},\chi}\Lfun_F\bk{s+\frac{3}{2},\chi}^2\Lfun_F\bk{2s+1,\chi^2},& E=F\times F\times F \\ 
\Lfun_F\bk{s+\frac{5}{2},\chi}\Lfun_K\bk{s+\frac{3}{2},\chi\circ\Nm_{E\rmod F}}\Lfun_F\bk{2s+1,\chi\circ\Nm},& E=F\times K \\
\frac{\Lfun_F\bk{s+\frac{5}{2},\chi}\Lfun_E\bk{s+\frac{3}{2},\chi\circ\Nm_{E\rmod F}}\Lfun_F\bk{2s+1,\chi^2}}{\Lfun_F\bk{s+\frac{3}{2},\chi}},& E \text{ field}} .
\]



\section{The Zeta Integral}
Let $\pi=\placestimes \pi_\nu$ be an irreducible cuspidal representation supporting an $\bk{U,\Psi_E}$-Fourier coefficient for some \'etale cubic algebra $E$ over $F$.
Fix a corresponding character $\Psi_E=\placestimes\Psi_{{\bk{a,b,c}},\nu}$ of $U\bk{\A}$ where $\bk{a,b,c}$ satisfy {\bf (CT)}.
Let $\chi=\placestimes \chi_\nu$ be a Hecke character.
For $\varphi\in \pi$ and a standard section $f_s\in I_{P_E}\bk{\chi,s}$ we consider the following integral
\begin{equation}
\label{Eq:Zeta Integral}
\zint_E \bk{\chi, s, \varphi, f} = \intl_{G\bk{F}\lmod G\bk{\A}} \varphi\bk{g} \Eisen_E^\ast\bk{\chi,s,f,g} dg .
\end{equation}
Since $\varphi$ is cuspidal, and hence rapidly decreasing, this integral defines a meromorphic function in the complex plane.
Hence the meromorphic continuation of $\zint_E \bk{\chi, s, \varphi, f}$ follows from that of $\Eisen_E^\ast\bk{\chi,s,f,g}$.

Let $\varphi=\placestimes \varphi_\nu\in\pi$ and $f_s=\placestimes f_{\nu}\in I_{P_E}\bk{\chi,s}$ be pure tensor products.
Let $S\subset\Places$ be a finite set of places such that for $\nu\notin S$ it holds that
\begin{itemize}
\item $2,3\not\vert\nu$ and $\nu\neq\infty$.
\item $E_\nu$ is unramified over $F_\nu$.
\item $\pi_\nu$ and $\chi_\nu$ are unramified.
\item $\varphi_\nu$ and $f_{\nu}$ are spherical.
\item $\psi_\nu$ is of conductor $\mO_\nu$.
\item Either $a= 0$ or $\FNorm{a}\in\mO_\nu^\times$ and similarly for $b$ and $c$.
\item Either $D_{a,b,c}= 0$ or $D_{a,b,c}\in\mO_\nu^\times$ and similarly for $N_{a,b,c}$.
\end{itemize}

\begin{Remark}
Note that if $N_{\bk{a,b,c}}=0$ then necessarily $D_{\bk{a,b,c}}\neq 0$.
\end{Remark}

Our main result is
\begin{Thm}
\label{Thm:Main theorem}
Let $\pi$, $\varphi$, $f$ and $S\subset\Places$ be as above.
Then
\begin{equation}
\zint_E \bk{\chi, s, \varphi, f} = \Lfun^S\bk{s+\frac{1}{2},\pi,\chi,\st} d_S\bk{\chi, s,\Psi_E,\varphi_S,f_S}
\end{equation}
Moreover, for any $s_0$ there exist vectors $\varphi_S$, $f_S$ such that $d_S\bk{\chi, s,\Psi_E,\varphi_S,f_S}$ is analytic in a neighborhood of $s_0$ and $d_S\bk{\chi,s_0,\Psi_E,\varphi_S,f_S}\neq 0$.

In particular, the family of twisted partial $\Lfun$-function $\Lfun^S\bk{s,\pi,\chi,\st}$ admits a meromorphic continuation to the whole complex plane.
\end{Thm}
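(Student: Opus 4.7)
The plan is to prove \Cref{Thm:Main theorem} by the Rankin--Selberg method: first unfold the Eisenstein series $\Eisen_E\bk{\chi,f,s,g}$ through the orbit decomposition of $P_E\bk{F}\lmod H_E\bk{F}$ under the right action of $G_2\bk{F}$, and then analyze each orbit. The classification of $G_2\bk{F}$-orbits on $P_E\bk{F}\lmod H_E\bk{F}$ (the byproduct mentioned in the introduction) gives finitely many double cosets. On each non-open orbit I expect the stabilizer in $G_2$ to contain a unipotent subgroup against which $\varphi$ has vanishing constant term by cuspidality, so those contributions drop out. The surviving open orbit has a representative $\gamma_0\in H_E\bk{F}$ whose stabilizer $\gamma_0^{-1}P_E\gamma_0\cap G_2$ equals $U$ semidirect with a group whose character on $U$ induced from $\chi\circ\det_{M_E}$ is precisely $\Psi_E$. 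The key combinatorial step is matching this induced character to $\Psi_E$ under the correspondence of Bhargava--Wright between $M\bk{F}$-orbits in $\Hom\bk{U,\Ga}$ and \'etale cubic $F$-algebras; this is where the choice of $\bk{a,b,c}$ satisfying \textbf{(CT)} enters.

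After unfolding, the global integral should rewrite as
\begin{equation*}
\zint_E\bk{\chi,s,\varphi,f} = \intl_{U\bk{\A}\lmod G_2\bk{\A}} L_{\Psi_E}\bk{\varphi}\bk{g}\, f_s\bk{\gamma_0 g}\, dg,
\end{equation*}
where $L_{\Psi_E}\bk{\varphi}$ is the Heisenberg Fourier coefficient of $\varphi$, known to be non-zero on $\pi$ by hypothesis. Since $\varphi$ and $f_s$ are factorizable and $\gamma_0\in G_2\bk{F}$, the integrand on the right factors over places into local integrals. The subtlety, following \cite{MR965059}, is that when $\dim\Hom_{U\bk{F_\nu}}\bk{\pi_\nu,\C_{\Psi_E}}>1$ the global functional $L_{\Psi_E}$ is not a tensor product of local functionals, so factorizability of $\zint_E$ is \emph{a priori} unclear.

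The hardest step is the unramified local computation. Following the new-way strategy of Piatetski-Shapiro--Rallis, I would prove that for every $\nu\notin S$, every $\ell\in\Hom_{U\bk{F_\nu}}\bk{\pi_\nu,\C_{\Psi_{E,\nu}}}$ and any spherical $v\in\pi_\nu$, the local integral
\begin{equation*}
I_\nu\bk{s} = \intl_{U\bk{F_\nu}\lmod G_2\bk{F_\nu}} \ell\bk{\pi_\nu\bk{g}v}\, f_{\nu}\bk{\gamma_0 g}\, dg
\end{equation*}
equals $\Lfun\bk{s+\tfrac{1}{2},\pi_\nu,\chi_\nu,\st}$, \emph{independently of the choice of $\ell$}. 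By Iwasawa decomposition this reduces to integration over $M\cap B$, where \Cref{Lem: Conditions on m} cuts down the support of $f_\nu^{\Psi_E}$. The resulting sum is then identified, using only the $U\bk{F_\nu}$-equivariance of $\ell$ and the Satake parameter $t_{\pi_\nu}$, with an approximation of the generating series of $\Lfun\bk{s+\tfrac{1}{2},\pi_\nu,\chi_\nu,\st}$ in the spirit of \cite{MR3284482}, strengthened here in \Cref{Sec:Unramified Computation}. This manipulation is uniform in $E$ and splits into cases only at the convolution and Fourier-transform steps, reflecting the arithmetic of the cubic algebra.

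Combining the unramified identity at each $\nu\notin S$ with the Euler product factorization of $f_s$ and the normalizing factor $j_E\bk{\chi,s}$ absorbed in $\Eisen_E^\ast$, the desired identity follows with $d_S$ the remaining product of local integrals at $S$. Analyticity and non-vanishing of $d_S\bk{\chi,s,\Psi_E,\varphi_S,f_S}$ near any prescribed $s_0$ is a local density statement: choose $\varphi$ with $L_{\Psi_E}\bk{\varphi}\bk{g_0}\neq 0$ for some $g_0$, and then pick $f_S$ supported in a small neighborhood of $\gamma_0 g_0$ so that the local factor at $S$ is, up to a nonzero constant, any prescribed holomorphic function near $s_0$. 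Meromorphic continuation of $\Lfun^S\bk{s,\pi,\chi,\st}$ is then inherited from that of $\Eisen_E^\ast$. The principal obstacle throughout is the combination of the orbit/stabilizer analysis with the non-unique-model unramified identity, and it is the latter that forces the approximation technique of \cite{MR3284482} to be reinforced before the main argument can close.
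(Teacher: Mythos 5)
Your overall architecture — unfolding over $P_E\bk{F}\lmod H_E\bk{F}\rmod G\bk{F}$, killing the non-open orbits by cuspidality, and then running the Piatetski-Shapiro--Rallis new-way mechanism with an approximation to the generating function at the unramified places — is exactly the paper's strategy. But there is a concrete error in your unfolding step that propagates into your local integrals. The stabilizer of the open orbit representative $\mu_E$ is \emph{not} of the form "$U$ semidirect with something"; it is $T_{3\alpha+2\beta}\cdot\ker\Psi_E$, where $\ker\Psi_E$ has codimension one in $U$. Moreover $\Psi_E$ cannot be "induced from $\chi\circ\det_{M_E}$", since that character is trivial on unipotent elements. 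The character $\Psi_E$ actually enters through a further Fourier expansion of the inner period $\intl_{\ker\Psi_E\bk{F}\lmod\ker\Psi_E\bk{\A}}\varphi\bk{ug}\,du$ along the one-dimensional quotient $U/\ker\Psi_E\cong U_{3\alpha+\beta}$: the trivial character dies by cuspidality and the $T_{3\alpha+2\beta}\bk{F}$-action collapses the nontrivial ones into the single orbit of $\Psi_E$. The price of this collapse is an extra unipotent integration on the Eisenstein side, so the correct unfolded integral is
\begin{equation*}
\zint_E\bk{\chi,s,\varphi,f}=\intl_{U\bk{\A}\lmod G\bk{\A}} L_{\Psi_E}\bk{\varphi}\bk{g}\,F^\ast\bk{\Psi_E,\chi,g,s}\,dg,\qquad F^\ast\bk{\Psi_E,\chi,g,s}=\intl_{\A} f_s\bk{\mu_E\,x_{3\alpha+\beta}\bk{r}\,g}\,\psi\bk{r}\,dr,
\end{equation*}
not $\intl_{U\bk{\A}\lmod G\bk{\A}}L_{\Psi_E}\bk{\varphi}\bk{g}f_s\bk{\mu_E g}\,dg$. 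As you wrote it, the integrand is not even well defined on $U\bk{\A}\lmod G\bk{\A}$: $L_{\Psi_E}\bk{\varphi}$ transforms by $\Psi_E$ under $U\bk{\A}$, while $f_s\bk{\mu_E\,\cdot}$ is invariant only under $\ker\Psi_E\bk{\A}$, so the product has no reason to descend. The same defect afflicts your local integral $I_\nu\bk{s}$, and it is precisely $F^\ast_\nu$ (not $f_\nu\bk{\mu_E\,\cdot}$) whose $\Psi_E$-Fourier-transform identity $\Delta_{\chi,s}^{\Psi_E}=F^\ast\bk{\Psi_E,\chi,\cdot,s}$ the unramified computation establishes.

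Once the inner $r$-integral is restored, the remainder of your plan (the $\ell$-independence of the unramified identity for every $\ell\in\Hom_{U\bk{F_\nu}}\bk{\pi_\nu,\C_{\Psi_{E,\nu}}}$, the inductive passage over finite sets of places, and the ramified non-vanishing via a choice of section concentrated near the open cell) coincides with the paper's proof, so the gap is localized but must be repaired before the local analysis can be carried out.
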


Most of this paper is devoted to the proof of this theorem. In this section we will outline the main ideas and defer the technical part to later sections and appendices.
In this paper we generalize the approach presented in \cite{MR3284482} while dealing with some delicate issues arising in the non-split case while, as noted in the introduction, do this uniformly for most of the way.
The only place where the proofs differ between the different \'etale cubic algebras is at \cref{Subsec: Convolution} and \cref{Sec: Fourier Transform} where the difference is caused by an arithmetic difference.

An application of this theorem to the $\theta$-lift for the dual pairs $\bk{G,S_E}$ is discussed in \cref{Sec: Application 1}.

In order to perform the unfolding of this integral, we describe the $G\bk{F}$-orbits in $P_E\bk{F}\lmod H_E\bk{F}$, this is done in \cref{Section: G2-orbits}.
In \cref{Sec:Unfolding} we prove the following theorem.
\begin{Thm}[Unfolding]
\label{Thm:Unfolding}
For $\Real\bk{s}\gg 0$ it holds that
\begin{equation}
\label{eq:unfolded integral}
\zint_E \bk{\chi, s, \varphi, f} = \intl_{U\bk{\A}\lmod G\bk{\A}} L_{\Psi_E}\bk{\varphi}\bk{g} F^\ast \bk{\Psi_E, \chi ,g,s} ,
\end{equation}
where
\begin{equation}
F^\ast \bk{\Psi_E,\chi,g,s} = \intl_{\A} f_s\bk{\mu_E x_{3\alpha+\beta}\bk{r}g} \psi\bk{r} dr
\end{equation}
and
\[
\mu_E = w_2w_1w_3w_4 x_{\alpha_1}\bk{a} x_{\alpha_3}\bk{b} x_{\alpha_4}\bk{c} .
\]
\end{Thm}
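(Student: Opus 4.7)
The plan is to carry out a standard Rankin--Selberg unfolding. For $\Real(s)\gg 0$ the Eisenstein series is given by the absolutely convergent sum $\Eisen_E(\chi,f_s,s,g)=\sum_{\gamma\in P_E(F)\lmod H_E(F)} f_s(\gamma g)$, and the rapid decay of the cusp form $\varphi$ permits exchanging this sum with the integral over $G(F)\lmod G(\A)$. Grouping terms by $G(F)$-orbits on $P_E(F)\lmod H_E(F)$ yields
\[
\zint_E(\chi,s,\varphi,f) \;=\; j_E(\chi,s)\sum_{[\gamma]}\intl_{\Stab_\gamma(F)\lmod G(\A)} \varphi(g)\, f_s(\gamma g)\, dg,
\]
where $\Stab_\gamma = G\cap \gamma^{-1} P_E \gamma$. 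By the orbit classification announced in \cref{Section: G2-orbits}, every $G(F)$-orbit on $P_E(F)\lmod H_E(F)$ other than the open one has a stabilizer containing the unipotent radical of some proper parabolic of $G$, so those contributions vanish by the cuspidality of $\varphi$. Only the open orbit, represented by $\mu_E$, survives.

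I would then analyze the open orbit. The key computation is to identify $\Stab_{\mu_E}$ as the four-dimensional subgroup $V\subset U$ obtained by deleting the $x_{3\alpha+\beta}$ direction, and to verify the character identity
\[
f_s(\mu_E v g) = \Psi_E(v)\, f_s(\mu_E g), \qquad v\in V(\A) .
\]
Both statements follow by a direct manipulation with the Chevalley--Steinberg relations for $H_E$ together with the explicit form of $\mu_E=w_2w_1w_3w_4\,x_{\alpha_1}(a)x_{\alpha_3}(b)x_{\alpha_4}(c)$. The fact that the triplet $(a,b,c)$ satisfies {\bf (CT)} is precisely what produces the coefficients $T_{(a,b,c)},\,D_{(a,b,c)},\,N_{(a,b,c)}$ in $\Psi_E$ as the values of the character of $P_E$ on $\mu_E V\mu_E^{-1}$.

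With the stabilizer and character in hand, write $U = V\cdot x_{3\alpha+\beta}(\A)$ and use Fubini to split
\[
\intl_{V(F)\lmod G(\A)} \;=\; \intl_{U(\A)\lmod G(\A)}\intl_{\A}\intl_{V(F)\lmod V(\A)} .
\]
The character identity allows the innermost integral over $V(F)\lmod V(\A)$ to combine with the $\A$-integral along $x_{3\alpha+\beta}$ into the full Fourier coefficient $L_{\Psi_E}(\varphi)(g)$ on the $\varphi$-side, while on the $f_s$-side the non-stabilized direction retains its explicit $r$-dependence and contributes $\intl_\A f_s(\mu_E x_{3\alpha+\beta}(r)g)\psi(r)\,dr$. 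Absorbing $j_E(\chi,s)$ into the definition of $F^\ast$ produces \eqref{eq:unfolded integral}.

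The principal obstacle is the double-coset analysis together with the explicit stabilizer and character computation: both must be handled uniformly for all \'etale cubic algebras $E$, and the combinatorics of the Chevalley--Steinberg \'epinglage for the quasi-split $H_E$ make the bookkeeping delicate. Condition {\bf (CT)} is precisely the mechanism that makes a single choice of representative $\mu_E$ and character $\Psi_E$ work uniformly across all $E$, and is what allows the proof to proceed without case splitting until \cref{Subsec: Convolution} and \cref{Sec: Fourier Transform}.
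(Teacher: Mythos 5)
Your overall skeleton (expand the Eisenstein series, group by $G(F)$-orbits, kill the closed orbits, keep the open one) is the same as the paper's, but two of the three essential steps, as you describe them, do not go through.

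First, the vanishing of the intermediate orbits $w_2w_1,w_2w_3,w_2w_4$ is \emph{not} a direct consequence of cuspidality. Their stabilizer is $L\cdot R$, where $R=[V,V]$ is only the commutator subgroup of the unipotent radical $V$ of the non-Heisenberg parabolic $Q$; this $R$ is three-dimensional and is not the unipotent radical of any proper parabolic of $G_2$, so the inner integral $\int_{R(F)\lmod R(\A)}\varphi(rg)\,dr$ is not a constant term and has no reason to vanish. The paper instead expands this $R$-period as a sum of Whittaker coefficients $\sum_{\nu\in N_\beta(F)\lmod L(F)}W_\psi(\varphi)(\nu g)$ (a result it quotes from the literature), collapses the sum against the $L(F)$-quotient in the outer integral, and then uses $\int_{N_\beta(F)\lmod N_\beta(\A)}\psi(n)\,dn=0$. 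This entire mechanism is absent from your argument; only for $\mu=1$, whose stabilizer is all of $P$, does your direct cuspidality argument apply.

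Second, your treatment of the open orbit misidentifies both the stabilizer and the source of the character. The stabilizer of $P_E\mu_E$ is $T_{3\alpha+2\beta}\cdot\ker\Psi_E$ (\cref{Cor: Stabilizer of the open orbit}), not the four-dimensional unipotent group obtained by deleting the $x_{3\alpha+\beta}$-direction, and in particular it contains a one-dimensional torus that your argument never uses. More seriously, the identity $f_s(\mu_E vg)=\Psi_E(v)f_s(\mu_E g)$ cannot hold with $\Psi_E$ non-trivial: the inducing character $\chi\circ\det_{M_E}\,|\!\det_{M_E}|^{s+5/2}$ of $P_E$ is trivial on every unipotent element, so $f_s$ is simply left-invariant under the unipotent part of the stabilizer and no non-trivial character can be extracted from the $f_s$-side. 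In the paper the character comes from the $\varphi$-side: one integrates $\varphi$ over $\ker\Psi_E(F)\lmod\ker\Psi_E(\A)$, Fourier-expands the result along the complementary line $U_{3\alpha+\beta}$, kills the trivial character (which is the constant term of $\varphi$ along $U$) by cuspidality, and collapses the sum over the non-trivial characters using the $F$-points of $T_{3\alpha+2\beta}$ sitting in the stabilizer. Without that torus the sum over $\xi\in F^\times$ of Fourier coefficients does not collapse to the single coefficient $L_{\Psi_E}(\varphi)$, and \eqref{eq:unfolded integral} cannot be reached by the route you propose.
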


\begin{Remark}
Note that if the section $f$ is factorizable then so is $F^\ast$. In particular
\[
F^\ast\bk{\Psi_E,\chi,g,s} = \prodl_{\nu\in\Places} F^\ast_\nu \bk{\Psi_{E,\nu},\chi_\nu,g_\nu,s},
\]
where 
\[
F^\ast_\nu \bk{\Psi_{E,\nu},\chi_\nu,g_\nu,s} = \intl_{F_\nu} f_s\bk{\mu_E x_{3\alpha+\beta}\bk{r}g_\nu} \psi_\nu\bk{r} dr .
\]

On the other hand, the integral in \cref{eq:unfolded integral} is a priori not factorizable since the space $\Hom_{U\bk{\A}}\bk{\pi,\C_\Psi}$ is not necessarily one-dimensional and may even be infinite dimensional.
Nevertheless, we will show that the integral is factorizable after all, this follows from an inductive process suggested in \cite{MR965059} and the following two results.

\end{Remark}

\begin{Thm}[Unramified Computation]
\label{Thm:Unramified Computation}
Let $\pi_\nu$ be an irreducible unramified representation of $G\bk{F_\nu}$, and let $v_0$ be a fixed spherical vector in $\pi_\nu$.
There exists $s_0\in\R$ such that for any $\Real\bk{s}>s_0$ and {\bf any} $\Lambda\in \Hom_{U\bk{F_\nu}}\bk{\pi_\nu,\C_{\Psi_{E,\nu}}}$ it holds that
\begin{equation}
\intl_{U\bk{F_\nu}\lmod G\bk{F_\nu}} F^\ast_\nu\bk{\Psi_{E,\nu},\chi_\nu,g,s} \Lambda\bk{\pi_\nu\bk{g}v_0} dg = \Lfun\bk{s+\frac{1}{2},\pi_\nu,\chi_\nu,\st} \Lambda\bk{v_0} .
\end{equation}
\end{Thm}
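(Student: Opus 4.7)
The proof will follow the ``new way'' strategy of Piatetski-Shapiro and Rallis \cite{MR965059}: I aim to establish the identity at the level of the spherical Hecke algebra of $G\bk{F_\nu}$ modulo the submodule annihilated by every $(U\bk{F_\nu},\Psi_{E,\nu})$-equivariant functional. This yields the integral identity for an arbitrary such $\Lambda$, despite the fact that $\Hom_{U\bk{F_\nu}}\bk{\pi_\nu,\C_{\Psi_{E,\nu}}}$ need not be one-dimensional.

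First I would use the Iwasawa decomposition $G\bk{F_\nu} = U\bk{F_\nu} T\bk{F_\nu} K_\nu$ together with the $K_\nu$-sphericity of both $F^\ast_\nu$ (inherited from $f_\nu$, which is spherical on $H_E\bk{F_\nu}\supset G\bk{F_\nu}$) and of $g \mapsto \Lambda\bk{\pi_\nu(g)v_0}$ to reduce the integral to one over $T\bk{F_\nu}$. By \cref{Lem: Conditions on m} the support of $F^\ast_\nu$ on $T\bk{F_\nu}$ lies in an explicit cone $C_E$ of the cocharacter lattice, and the $P_E$-equivariance of $f_\nu$ combined with the explicit form of $\mu_E$ gives a closed formula for $F^\ast_\nu(t)$ on $C_E$. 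Gathering terms, the left-hand side of the theorem takes the form $\Lambda\bk{\pi_\nu(\Xi_s)v_0}$ for an explicit element $\Xi_s$ in (a completion of) the spherical Hecke algebra $\Hecke_\nu$, depending only on $\chi_\nu$, $s$, and $E_\nu$.

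The theorem then reduces to establishing the Hecke-algebra congruence
\[
\pi_\nu\bk{\Xi_s}v_0 \equiv \Lfun\bk{s+\tfrac{1}{2},\pi_\nu,\chi_\nu,\st}\cdot v_0 \pmod{J_{\Psi_{E,\nu}}},
\]
where $J_{\Psi_{E,\nu}}$ is the span of $\set{\pi_\nu(u)w - \Psi_{E,\nu}(u)w \mvert u\in U\bk{F_\nu},\,w\in\pi_\nu}$---exactly the common kernel of all $\Lambda\in\Hom_{U\bk{F_\nu}}\bk{\pi_\nu,\C_{\Psi_{E,\nu}}}$. To prove this I would follow \cite{MR3284482}: rather than attempting to exhibit $\Xi_s$ as an exact generating function for the local $\Lfun$-factor, one uses an approximation (refined in \cref{Sec:Unramified Computation}) that agrees with $\Xi_s$ modulo $J_{\Psi_{E,\nu}}$ and whose Satake transform realizes $\Lfun\bk{s+\frac{1}{2},\pi_\nu,\chi_\nu,\st}$; applying $\Lambda$ then yields the stated identity.

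The principal obstacle is exactly this last matching: identifying $\Xi_s$---built from the Heisenberg parabolic of the triality form $H_E = D_4^E$ twisted by $\mu_E = w_2w_1w_3w_4 x_{\alpha_1}(a)x_{\alpha_3}(b)x_{\alpha_4}(c)$---with a spherical Hecke element realizing the seven-dimensional $\Lfun$-factor of $G_2\bk{\C}\times\C^\times$, modulo the ideal cut out by $U$-equivariance against $\Psi_E$. The arithmetic of $E_\nu$ enters explicitly through the triple $(a,b,c)$ satisfying \textbf{(CT)} and structurally through the Galois action defining $H_E$; nevertheless, the approximation is engineered so that the cancellations producing the $\Lfun$-factor are uniform across all \'etale cubic algebras, which is the technical heart of \cref{Sec:Unramified Computation}.
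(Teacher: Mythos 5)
Your overall strategy is the paper's: the ``for every $\Lambda$'' claim is reduced to a $\Lambda$-independent statement, namely that $F^\ast_\nu\bk{\Psi_{E,\nu},\chi_\nu,\cdot,s}$ coincides with the $\Psi_E$-Fourier transform of the generating series $\Delta_{\chi,s+\frac{1}{2}}=\sum_k A_k\,\chi^k\bk{\unif}q^{-k\bk{s+\frac{1}{2}}}$ whose Satake transform produces $\Lfun\bk{s+\frac{1}{2},\pi_\nu,\chi_\nu,\st}$ against an arbitrary functional; your congruence modulo $J_{\Psi_{E,\nu}}$ is the module-theoretic shadow of this identity of functions in $\M_{\Psi_E}$, and the reduction to $\chi=1$ and to toral values of $g$ is as in the paper.

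The one step that would fail as written is your description of the approximation. The auxiliary function used in the paper (the bi-$K$-invariant $D_s$ with $D_s\bk{t}=\FNorm{\omega_1\bk{t}}^{s+\frac{7}{2}}$ on $T^+$) does \emph{not} agree with the generating function modulo $J_{\Psi_{E,\nu}}$, nor does its Satake transform realize the $\Lfun$-factor. The exact relation is the convolution identity $D_s=\Delta_{s+\frac{1}{2}}\conv P_s$ for an explicit $P_s\in\Hecke\coset{\coset{q^{-s}}}$, and the essential extra ingredient --- absent from your outline --- is that convolution with $P_s$ is \emph{injective} on $\M_{\Psi_E}$ for $\Real\bk{s}\gg 0$. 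Only with that injectivity can one pass from the explicitly verifiable identity $D_s^{\Psi_E}=F^\ast\bk{\Psi_E,\cdot,s}\conv P_s$ (checked case by case in the appendices, where the arithmetic of $E$ actually enters) back to the desired $\Delta_{s+\frac{1}{2}}^{\Psi_E}=F^\ast$. Relatedly, your intermediate claim that the left-hand side ``takes the form $\Lambda\bk{\pi_\nu\bk{\Xi_s}v_0}$ for an explicit $\Xi_s$'' presupposes that $F^\ast_\nu$ lies in the image of the Fourier transform $\Hecke_\nu\to\M_{\Psi_E}$, which is essentially the content of what must be proved; the paper avoids this circularity by starting from $\Delta$ (whose Fourier transform is not explicit) and comparing only after convolving both sides with $P_s$.
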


\begin{Remark}
The case of a split Fourier coefficient and $\chi\equiv 1$ is dealt in \cite{MR3284482}.
The function $F^\ast$ has a different form there due to the different choice of representative for the open orbit that gives rise to a different character $\Psi_s$ in the same orbit of $\Psi_{F\times F\times F}$.
In \cref{Sec:Unramified Computation} we relate $F^\ast_\nu\bk{\Psi_{E,\nu},\chi_\nu,g,s}$ and $F^\ast_\nu\bk{\Psi_{s,\nu},\chi_\nu,g,s}$.
\end{Remark}

\begin{Thm}[Ramified Computation]
\label{Thm:Ramified Computation}
For any $s_0\in\C$ there exist datum $\varphi_S$ and $f_S$ such that $d_S\bk{\chi, s,\Psi_E,\varphi_S,f_S}$
is holomorphic and non-vanishing in a neighborhood of $s_0$.
\end{Thm}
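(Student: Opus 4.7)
The plan is to localize the ramified factor and then construct test data place-by-place from inside the open Bruhat cell. After the unfolding (Theorem~\ref{Thm:Unfolding}) and an application of the unramified identity (Theorem~\ref{Thm:Unramified Computation}) at every $\nu\notin S$, the quantity $d_S$ reduces to an integral over the finite-place quotient,
\begin{equation*}
d_S\bk{\chi,s,\Psi_E,\varphi_S,f_S} = \intl_{U\bk{F_S}\lmod G\bk{F_S}} L_{\Psi_E}\bk{\varphi}\bk{g_0\cdot g_S}\prod_{\nu\in S} F^\ast_\nu\bk{\Psi_{E,\nu},\chi_\nu,g_\nu,s}\,dg_S,
\end{equation*}
where $L_{\Psi_E}\bk{\varphi}$ is a continuous function on $U\bk{\A}\lmod G\bk{\A}$ which, by the hypothesis that $\pi$ supports the $\bk{U,\Psi_E}$-Fourier coefficient, is not identically zero. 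The strategy is to pick $\bk{\varphi_S,f_S}$ so that the integrand concentrates near a point where $L_{\Psi_E}\bk{\varphi}$ is non-zero, with the $s$-dependence factoring through an explicit Fourier transform of a bump.

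At each $\nu\in S$, use the Bruhat decomposition of $H_E$ with respect to $P_E$ to restrict attention to standard sections $f_{\nu,s}\in I_{P_E}\bk{\chi_\nu,s}$ supported in the open cell $P_E\bk{F_\nu}\mu_E\bar U_E\bk{F_\nu}$, where $\bar U_E$ denotes the unipotent radical of the parabolic opposite to $P_E$. Parametrizing this cell via $\bar U_E\bk{F_\nu}\cong F_\nu^n$, one may prescribe $f_{\nu,s}$ on the chart as an arbitrary $\phi_\nu\in C^\infty_c\bk{\bar U_E\bk{F_\nu}}$, the $s$-dependence entering only through the holomorphic character $\chi_\nu\cdot\FNorm{\det}^{s+\frac{5}{2}}$ on the $M_E$-factor. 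Since $\phi_\nu$ has compact support disjoint from the boundary of the big cell,
\begin{equation*}
F^\ast_\nu\bk{\Psi_{E,\nu},\chi_\nu,g,s} = \intl_{F_\nu} f_{\nu,s}\bk{\mu_E x_{3\alpha+\beta}\bk{r}g}\psi_\nu\bk{r}\,dr
\end{equation*}
is a partial Fourier transform of a fixed Schwartz function in one $\bar U_E$-coordinate, hence entire in $s$. Shrinking the support of $\phi_\nu$ around a preimage of any preassigned $g^0_\nu\in G\bk{F_\nu}$ makes $F^\ast_\nu\bk{\cdot,s}$ approximate a bump of arbitrarily small support centered at $g^0_\nu$, with $s$-dependence given by an entire non-vanishing factor in a fixed neighborhood of $s_0$.

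Now pick $\varphi_S$ and a point $g^0=\bk{g^0_\nu}_{\nu\in S}$ with $L_{\Psi_E}\bk{\varphi}\bk{g_0\cdot g^0}\neq 0$, and then choose each $\phi_\nu$ supported in a sufficiently small neighborhood of the preimage of $g^0_\nu$. A continuity argument reduces $d_S\bk{\cdot,s}$ in a neighborhood of $s_0$ to $L_{\Psi_E}\bk{\varphi}\bk{g_0\cdot g^0}$ times $\prod_{\nu\in S}\widehat{\phi}_\nu$ evaluated at an explicit point, which is both holomorphic and non-zero near $s_0$. The main obstacle is the archimedean places in $S$: there one must ensure that $\phi_\nu$ can be taken $K_\infty$-finite while simultaneously keeping its support inside the open Bruhat cell and preserving non-vanishing of its Fourier transform at the prescribed point. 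This is handled by a density/approximation argument in the space of $K_\infty$-finite sections supported in the big cell, combined with continuity of the Fourier-transform evaluation functional $\phi_\nu\mapsto\widehat{\phi}_\nu$; holomorphy of $F^\ast_\nu$ in $s$ is automatically preserved, since the $s$-dependence never couples to the $\bar U_E$-parametrization of $\phi_\nu$.
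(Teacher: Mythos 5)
Your overall strategy---restrict the section to the open orbit, make the local factor at $\nu\in S$ a compactly supported (hence entire in $s$) object, and then concentrate it near a point where the Fourier coefficient is non-zero---is the right starting point and matches the first half of the paper's argument (the paper realizes this via the inclusion $\ind^{G\bk{F_S}}_{G^{\mu_E}\bk{F_S}}\chi_s^{\mu_E}\hookrightarrow\Ind^{H_E\bk{F_S}}_{P_E\bk{F_S}}\chi_s$, extending by zero off the open double coset $P_E\mu_E G$; note that the relevant open set is this $G$-double coset, not the cell $P_E\mu_E\bar U_E$ you invoke). However, there is a genuine gap in the concentration step. The stabilizer of the open orbit is $G^{\mu_E}=T_{3\alpha+2\beta}\cdot\ker\Psi_E$, and $T_{3\alpha+2\beta}\cong\Gm$ is \emph{not} contained in $U$. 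A section supported compactly modulo $P_E$ gives a function $g\mapsto f_s\bk{\mu_E g}$ that is compactly supported only modulo the full stabilizer $T_{3\alpha+2\beta}\ker\Psi_E$ on the left; consequently the support of $F^\ast_\nu\bk{\Psi_{E,\nu},\chi_\nu,\cdot,s}$ in $U\bk{F_\nu}\lmod G\bk{F_\nu}$ is invariant under left translation by $T_{3\alpha+2\beta}\bk{F_\nu}\cong F_\nu^\times$ and along that direction $F^\ast_\nu$ transforms by the character $\chi_s$ rather than being a bump. You therefore cannot shrink the support to a small neighborhood of a single point $g^0_\nu$; no matter how you choose $\phi_\nu$, the integral retains a non-compact Tate-type factor
\[
J_S\bk{s,\varphi}=\intl_{F_S^\times} L_{\Psi_E}\bk{\varphi}\bk{h_{3\alpha+2\beta}\bk{t}}\,\chi_s\bk{\cdot}\,d^\times t ,
\]
whose convergence and non-vanishing near $s_0$ is exactly the point your continuity argument cannot deliver: a priori this integral may diverge, or converge and vanish identically at $s_0$.

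The paper closes this gap with an extra idea you are missing: it convolves $\varphi$ with a Schwartz function $\phi_S$ along the one-parameter subgroup $x_{2\alpha+\beta}$, which has the effect of multiplying $L_{\Psi_E}\bk{\pi\bk{h_{3\alpha+2\beta}\bk{t}}\varphi}$ by the Fourier transform $\hat\phi_S\bk{t}$. Choosing $\hat\phi_S$ to be supported in a small relatively compact neighborhood of $1\in F_S^\times$ truncates the torus integral $J_S$ to a compact region and forces it to be entire and non-zero near $s_0$ (the paper's Lemma \ref{inner:ramified:integral}); Dixmier--Malliavin is then used to produce the pair $\bk{\Phi_S,\varphi}$ with $L_{\Psi_E}\bk{\Phi_S\ast\varphi}\neq 0$ compatible with the section construction. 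Your proposal needs either this multiplier trick or an equivalent device to control the residual $T_{3\alpha+2\beta}$-integration; without it the claimed reduction of $d_S$ to $L_{\Psi_E}\bk{\varphi}\bk{g^0}$ times a product of Fourier transforms does not hold.
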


\Cref{Thm:Unramified Computation} is proved in \cref{Sec:Unramified Computation} together with the appendices, \cref{Thm:Ramified Computation} is proved in \cref{Sec:Ramified Computation}.
We now show how to derive the main theorem from the other results presented in this section.
The proof is essentially the same as in \cite{MR3284482}, we include it for the convenience of the reader.

\begin{proof}[Proof of \cref{Thm:Main theorem}]
By \cref{Thm:Unfolding}
\begin{equation}
\label{eq:Integral as a limit of integrals}
\zint_E\bk{\chi, s,\varphi,f}=
\mathop{\mathop{\lim_{\longrightarrow}}_{S\subset\Omega\subset{\Places}}}_{\Card{\Omega}<\infty}
\int_{U\bk{\A}_{\Omega}\setminus G\bk{\A}_{\Omega}} L_{\Psi_s}\bk{\varphi}\bk{g}F_\Omega^\ast\bk{\Psi_{\bk{a,b,c},\Omega},\chi_\Omega,g,s}\, dg \ ,
\end{equation}
where $\displaystyle G\bk{\A}_{\Omega}=\prod_{\nu\in\Omega}G\bk{F_{\nu}}$ and
\[
F_\Omega^\ast\bk{\Psi_{E,\Omega},\chi_\Omega,g,s}=j_{E,\Omega}(s)\int\limits_{F_\Omega} f_s\bk{\mu_E x_{3\alpha+\beta}\bk{r}g} \psi_\Omega \bk{r} dr .
\]
Fix $s_0\in\R$ such that the right hand side of \cref{eq:unfolded integral} converges 
for $\Real{s}\more{s_0}$. The integrals on the right hand side of \cref{eq:Integral as a limit of integrals} must also converge there.
Fix a finite subset $S\subseteq\Omega\subset\Places$ and $\nu\notin \Omega$.
Also fix $s_1\in\R$ such that \cref{Thm:Unramified Computation} holds for $\Real{s}\more{s_1}$ and $\pi_\nu$.
It the holds that
\begin{align*}
&\int_{U\bk{\A}_{\Omega\cup\{\nu\}}\setminus G\bk{\A}_{\Omega\cup\set{\nu}}} L_{\Psi_E}\bk{\varphi}\bk{g}F_{\Omega\cup \set{\nu}}^\ast\bk{\Psi_{E,\Omega\cup\set{\nu}},\chi_{\Omega\cup\set{\nu}},g,s}\, dg = \\
& \int_{U\bk{\A}_{\Omega}\setminus G\bk{\A}_{\Omega}} 
\int_{U\bk{F_{\nu}}\setminus G\bk{F_{\nu}}} L_{\Psi_E}\bk{\varphi}\bk{g g_{\nu}} F_{\Omega\cup \set{\nu}}^\ast\bk{\Psi_{E,\Omega\cup\set{\nu}},\chi_{\Omega\cup\set{\nu}}, g g_{\nu},s}\, dg_{\nu}\, dg= \\
& \int_{U\bk{\A}_{\Omega}\setminus G\bk{\A}_{\Omega}} F_\Omega^\ast\bk{\Psi_{E,\Omega},\chi_\Omega,g,s} 
\int_{U\bk{F_{\nu}}\setminus G\bk{F_{\nu}}} L_{\Psi_E}\bk{\varphi}\bk{g g_{\nu}} F_\nu^\ast\bk{\Psi_{E,\nu},\chi_\nu,g_{\nu},s}\, dg_{\nu}\, dg= \\
&\Lfun\bk{s+\frac{1}{2},\pi_{\nu},\st} 
\int_{U\bk{\A}_{\Omega}\setminus G\bk{\A}_{\Omega}} L_{\Psi_E}\bk{\varphi}\bk{g} F_\Omega^\ast\bk{\Psi_{E,\Omega},\chi_\Omega,g,s}\, dg \ ,
\end{align*}
where the last equality is due to \cref{Thm:Unramified Computation}. A priori the last equality holds only
 for $\Real{s}\more\max\set{s_0,s_1}$, but since $\Lfun\bk{s+\frac{1}{2},\pi_{\nu},\st}$ is a meromorphic 
function the equality actually holds for $\Real{s}\more{s_0}$. Plugging this into 
\cref{eq:Integral as a limit of integrals} we get
\begin{align*}
\zint_E\bk{\chi, s,\varphi,f}&=
\mathop{\mathop{\lim_{\longrightarrow}}_{S\subset\Omega\subset{\Places}}}_{\Card{\Omega}<\infty} 
\int_{U\bk{\A}_{\Omega}\setminus G\bk{\A}_{\Omega}} L_{\Psi_E}\bk{\varphi}\bk{g}F_\Omega^\ast\bk{\Psi_{E,\Omega},\chi_\Omega,g,s}\, dg= \\
&= \mathop{\mathop{\lim_{\longrightarrow}}_{S\subset\Omega\subset{\Places}}}_{\Card{\Omega}<\infty} 
\prod_{\nu\in{\Omega\setminus{S}}} \Lfun\bk{s+\frac{1}{2},\pi_{\nu},\chi,\st} 
\int_{U\bk{\A}_{S}\setminus G\bk{\A}_{S}} L_{\Psi_E}\bk{\varphi}\bk{g}F_S^\ast\bk{\Psi_{E,S},\chi_S,g,s}\, dg=\\
&=\Lfun^{S}\bk{s+\frac{1}{2},\pi,\chi,\st} \int_{U\bk{\A}_{S}\setminus G\bk{\A}_{S}} 
L_{\Psi_E}\bk{\varphi}\bk{g}F_S^\ast\bk{\Psi_{E,S},\chi_S,g,s}\, dg \ .
\end{align*}

We finish the proof by fixing our datum according to \cref{Thm:Ramified Computation}and taking
\[
d_{S}\bk{s,{\varphi}_S,f_{S}}=\int_{U\bk{\A}_{S}\setminus G\bk{\A}_{S}} 
L_{\Psi_E}\bk{\varphi}\bk{g}F_S^\ast\bk{\Psi_{E,S},\chi_S,g,s}\, dg \ .
\]
\end{proof}

\section{$G_2$ Orbits in $P_E\lmod H_E$}
\label{Section: G2-orbits}
In order to perform the unfolding of \cref{Eq:Zeta Integral} we need to describe $P_E\bk{F}\lmod H_E\bk{F}\rmod G\bk{F}$ for any \'etale cubic algebra $E$ over $F$.
In order to do this, we give a description of $\bk{P_E\lmod H_E}\bk{\overline{F}}\rmod G\bk{\overline{F}}$.
Although $\bk{P_E\lmod H_E}\bk{\overline{F}}\rmod G\bk{\overline{F}}$ is independent of $E$, the representatives we choose depend on $E$ so that we can use the method of Galois descent in order to compute $\bk{P_E\lmod H_E}\bk{F}\rmod G\bk{F}$.

Denote $P_E\lmod H_E$ by $X_E$, this is a projective variety with a right $G$-action.
Note that $X_E\bk{F}=\bk{P_E\lmod H_E}\bk{F} = P_E\bk{F}\lmod H_E\bk{F}$ due to \cite[Theorem 4.13a]{MR0207712}.
Let $Q=L\cdot V$ be the non-Heisenberg maximal parabolic subgroup of $G$.
Its Levi part $L\cong GL_2$ is generated by the root $\beta$.
The unipotent radical $V$ of $Q$ is a three-step unipotent group, we denote its commutator $\coset{V,V}$ by $R$.
We recall from \cite[Lemma 2.1]{MR1617425} that $X_E\bk{\overline{F}}$ has five $G\bk{\overline{F}}$-orbits, given as follows.
\begin{Lem}
\label{Lem: Dihua split orbits}
The following is a list of representatives of the $G\bk{\bar{F}}$-orbits in $X_E\bk{\bar{F}}$ and their stabilizers:
\begin{enumerate}
\item $\mu=1$ and the stabilizer of $P_E\bk{\bar{F}}\mu G\bk{\bar{F}}$ is $G^\mu=P$.
\item $\mu=w_2w_1, w_2w_3, w_2w_4$ and the stabilizer of $P_E\bk{\bar{F}}\mu G\bk{\bar{F}}$ is $G^\mu=LR$.
\item $\mu= w_2w_3x_{-\alpha_1}\bk{1}$ is a representative of the open orbit and the stabilizer of $P_E\bk{\bar{F}}\mu G\bk{\bar{F}}$ is $G^\mu=T_{3\alpha+2\beta}\cdot U^\mu$, where
\[
T_{3\alpha+2\beta}=\set{h_{3\alpha+2\beta}\bk{t}\mvert t\in \bar{F}^\times},\quad
U^\mu=\set{u\bk{r_1,r_2,r_2,r_4,r_5}\mvert r_i \in \bar{F}}
\]
\end{enumerate}
\end{Lem}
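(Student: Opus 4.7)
The plan is to reduce to the algebraically closed setting and invoke \cite[Lemma 2.1]{MR1617425}. Over $\bar{F}$ the quasi-split group $H_E$ becomes isomorphic to the split group $\Spin_8$, independently of $E$, so the orbit set $X_E(\bar{F})/G(\bar{F})$ is independent of $E$ as well. Jiang's lemma asserts that this orbit set has cardinality five, with one closed orbit, three intermediate orbits permuted by the triality automorphism of $D_4$, and one open orbit.

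It then remains to verify that the listed $\mu$'s are indeed representatives of these five orbits, and to identify the stabilizers. For $\mu = 1$ the stabilizer is $G \cap P_E$, which equals $P$ by the compatibility $P_E \cap G = P$ recorded in the preliminaries. For $\mu = w_2 w_i$ with $i \in \{1,3,4\}$, I would compute $G^\mu = \{g \in G : \mu g \mu^{-1} \in P_E\}$ by conjugating the root subgroups of $G$ by $\mu$ and picking out those landing in $P_E$; this identifies $G^\mu = LR$, where $L$ is the Levi of the non-Heisenberg parabolic $Q = LV$ of $G$ and $R = [V,V]$ is three-dimensional, so $\dim LR = 7$. The three cosets are distinct over $\bar{F}$ because triality permutes $\alpha_1, \alpha_3, \alpha_4$ cyclically while fixing $\alpha_2$, so the three $\mu$'s lie in different $P_E\,\text{-}\,G$ double cosets. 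The dimension count $14 - 7 = 7$ matches the intermediate orbits of Jiang's classification.

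For the open orbit I would take $\mu = w_2 w_3 x_{-\alpha_1}(1)$ and impose the defining condition $\mu g \mu^{-1} \in P_E$. Using the Chevalley commutator formulas in $\Spin_8$, the interaction of the additional factor $x_{-\alpha_1}(1)$ with the $U$-coordinates produces a single linear constraint $r_2 = r_3$, so the unipotent part of the stabilizer is precisely the four-dimensional subgroup $U^\mu = \{u(r_1, r_2, r_2, r_4, r_5)\}$; the torus part surviving conjugation reduces to $T_{3\alpha+2\beta}$. The final dimension count $\dim G - \dim G^\mu = 14 - 5 = 9 = \dim X_E$ confirms openness.

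The main technical obstacle is the explicit root-subgroup bookkeeping in the $D_4$ root system for the open orbit: one must carefully track how conjugation by $w_2 w_3 x_{-\alpha_1}(1)$ acts on every root subgroup of $G$ and single out the precise linear relation $r_2 = r_3$ among the $U$-coordinates forced by the commutators with $x_{-\alpha_1}(1)$. The twisting of $H_E$ by $E$ plays no role at this step, since it only alters the $F$-rational structure of $H_E$, not the $\bar{F}$-points over which the orbit computation takes place.
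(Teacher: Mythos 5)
Your proposal is correct and matches the paper's treatment: the paper offers no proof of this lemma at all, simply recalling it from \cite[Lemma 2.1]{MR1617425}, which is exactly the citation your argument rests on. The additional stabilizer verifications and dimension counts you sketch ($\dim LR = 7$ and $\dim G - \dim G^\mu = 14-5 = 9 = \dim X_E$ for the open orbit) are consistent and go beyond what the paper records, though note that the precise linear relation cutting out $U^\mu$ ($r_3 = r_2$ versus $r_3 = -r_2$, i.e.\ the algebraic kernel of $\Psi_s$ appearing in \cref{Cor: Stabilizer of the open orbit}) depends on the sign conventions in the chosen Chevalley structure constants.
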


We now give a different set of representatives for these orbits.
For a triple $\bk{a,b,c}\in \overline{F}\times \overline{F}\times \overline{F}=\bk{\Res_{E\rmod F}\Ga}\bk{\overline{F}}$ we denote
\[
\mu\bk{a,b,c} =  w_2w_1w_3w_4 x_{\alpha_1}\bk{a} x_{\alpha_3}\bk{b} x_{\alpha_4}\bk{c} .
\]

For $x,x'\in X_E\bk{\overline{F}}$ we say that $x\sim x'$ if they lie in the same $G\bk{\overline{F}}$-orbit.
\begin{Lem}
\label{Lem:New orbit representatives}
Let $\bk{a,b,c}\in \overline{F}\times \overline{F}\times \overline{F}$ then
\begin{enumerate}
\item $a=b=c$ if and only if $\mu\bk{a,b,c}\sim 1$
\item 
\begin{itemize}
\item$a=b\neq c$ if and only if $\mu\bk{a,b,c}\sim w_2w_4$ 
\item $a\neq b=c$ if and only if $\mu\bk{a,b,c}\sim w_2w_1$ 
\item $a=c\neq b$ if and only if $\mu\bk{a,b,c}\sim w_2w_3$ 
\end{itemize}

\item $a$, $b$ and $c$ are distinct if and only if $\mu\bk{a,b,c}\sim w_2w_3x_{-\alpha_1}\bk{1}$
\end{enumerate}
\end{Lem}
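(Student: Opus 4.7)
The plan is to reduce each case to a base identification via two natural symmetries of the parametrization, then to carry out the base identifications by explicit computation. The two reductions are as follows. First, the short-root one-parameter subgroup of $G$ sits inside $H_E\otimes\overline{F}$ via the triality embedding as $x_\alpha(t) := x_{\alpha_1}(t) x_{\alpha_3}(t) x_{\alpha_4}(t)$, whose factors pairwise commute since $\alpha_i+\alpha_j$ is not a $D_4$-root for distinct $i,j\in\{1,3,4\}$. Right-multiplying $\mu\bk{a,b,c}$ by $x_\alpha(-t)\in G(\overline{F})$ yields $\mu\bk{a-t,b-t,c-t}$, so the $G$-orbit depends only on $\bk{a,b,c}$ up to simultaneous translation. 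Second, each triality automorphism $\sigma$ of $H_E\otimes\overline{F}$ permuting $\{\alpha_1,\alpha_3,\alpha_4\}$ fixes $\alpha_2$ (hence preserves $P_E$) and restricts to the identity on $G$, so it descends to a symmetry of $X_E(\overline{F})/G(\overline{F})$ with $\sigma\bk{\mu\bk{a,b,c}} = \mu\bk{\sigma\cdot\bk{a,b,c}}$ and $\sigma(w_2 w_i) = w_2 w_{\sigma(i)}$.

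For part (1), translating reduces to $\mu\bk{0,0,0} = w_2 w_1 w_3 w_4$; the triality embedding identifies $w_1 w_3 w_4$ with the short-root reflection $w_\alpha\in W(G)$ and $w_2$ with $w_\beta$, so $\mu\bk{0,0,0} = w_\beta w_\alpha\in G(\overline{F})$, placing $P_E\mu\bk{0,0,0}$ in the orbit of $1$. For part (2), triality reduces to the base case $\mu\bk{0,0,d}\sim w_2 w_4$ for $d\neq 0$. The key Weyl-group identity is $w_2 w_4\cdot(w_1 w_3 w_4) = w_2 w_1 w_3$, using $w_4^2=1$ and that $w_4$ commutes with $w_1,w_3$. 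Setting $g = (w_1 w_3 w_4)\cdot x_{-\alpha}(d^{-1})\in G$ with $x_{-\alpha}(s) := x_{-\alpha_1}(s)x_{-\alpha_3}(s)x_{-\alpha_4}(s)$, one obtains
\[
w_2 w_4\cdot g = w_2 w_1 w_3\cdot x_{-\alpha_1}(d^{-1})x_{-\alpha_3}(d^{-1})x_{-\alpha_4}(d^{-1}).
\]
On the other hand, applying the $SL_2$-identity $w_\gamma x_\gamma(t) = x_\gamma(-t^{-1})h_\gamma(t^{-1})x_{-\gamma}(t^{-1})$ at the $\alpha_4$-slot of $\mu\bk{0,0,d} = w_2 w_1 w_3 w_4 x_{\alpha_4}(d)$ produces an $x_{-\alpha_4}(d^{-1})$ factor matching the rightmost one in $w_2 w_4\cdot g$; the remaining factors $x_{-\alpha_1}(d^{-1})$, $x_{-\alpha_3}(d^{-1})$, $x_{\alpha_4}(-d^{-1})$, and $h_{\alpha_4}(d^{-1})$, after conjugation through $w_2 w_1 w_3$, become elements of $U_E\cdot T_E\subset P_E$ (since $w_2$ sends each $\pm\alpha_i$ for $i\in\{1,3,4\}$ to a root containing $\pm\alpha_2$). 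Hence $\mu\bk{0,0,d} = p\cdot w_2 w_4\cdot g$ for explicit $p\in P_E$, $g\in G$, and the other two sub-patterns follow by applying $\sigma,\sigma^2$.

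For part (3), the map $\Phi:\bk{a,b,c}\mapsto P_E\mu\bk{a,b,c}$ is injective (by unique factorization of the unipotent tail attached to the fixed Weyl element), so $\Phi(\overline{F}^3)$ is a $3$-dimensional subvariety of $X_E$. From parts (1)--(2), $\Phi^{-1}$ of the non-open orbits contains the coincidence divisor $\{a=b\}\cup\{b=c\}\cup\{a=c\}$. Since the open orbit is Zariski-open in $X_E$, $\Phi^{-1}$ of the open orbit is a triality-invariant open subset of $\overline{F}^3$; by connectedness of the all-distinct stratum and transitivity of the triality $S_3$-action on the three intermediate orbits, any strictly larger closed $\Phi$-preimage of non-open orbits would have to map into the $S_3$-fixed closed orbit, which is ruled out by a stabilizer-dimension check at, e.g., $\mu\bk{0,1,\lambda}$ for generic $\lambda$ (whose stabilizer in $G$ has the expected dimension $5$, matching the open orbit rather than the $9$-dimensional stabilizer of the closed orbit). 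The principal obstacle is the explicit matching in part (2): the triality symmetry cyclically permutes the three $2$-coincidence patterns through the three intermediate orbits, but pinning down that $a=b\neq c$ specifically corresponds to $w_2 w_4$ is intrinsically asymmetric and demands the Weyl-group identity $w_2 w_4\cdot w_\alpha = w_2 w_1 w_3$ together with careful bookkeeping of conjugation-induced root shifts from $M_E$ into $U_E\subset P_E$.
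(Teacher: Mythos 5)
Parts (1) and (2) of your argument are correct, and they take a genuinely different route from the paper: you use translation by $x_\alpha(-t)$ and the triality automorphisms (which fix $P_E\otimes\overline{F}$ and $G$, hence permute the $G(\overline{F})$-orbits and send $\mu\bk{a,b,c}$ to $\mu\bk{c,a,b}$ and $w_2w_4$ to $w_2w_1$, etc.) to reduce to the single base case $\mu\bk{0,0,d}\sim w_2w_4$, which you settle with the $SL_2$-identity at the $\alpha_4$-slot. The paper instead writes down, once and for all, an explicit element $m\bk{a,a,c}=x_\alpha\bk{-a}w_\alpha\check{\beta}\bk{a-c}x_\alpha\bk{1}\in M\bk{F}$ with $\mu\bk{a,a,c}m\bk{a,a,c}\bk{w_2w_4}^{-1}\in B_E\bk{\overline{F}}$. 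Your reduction is cleaner conceptually; the paper's is a one-line verification. (You leave the converse directions implicit; they follow, as in the paper, because the five cases are exhaustive and \cref{Lem: Dihua split orbits} lists the five orbits as distinct, so this is harmless.)

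Part (3), however, has a genuine gap. Your topological argument does not show that every triple with $a,b,c$ distinct lands in the open orbit. The complement, inside the all-distinct stratum, of the preimage of the open orbit is indeed closed and $S_3$-invariant, but $S_3$-invariance in no way forces it to map into the closed orbit: a distinct triple could map to one middle orbit and its $S_3$-translates to the other two middle orbits, with no contradiction. The stabilizer-dimension check at $\mu\bk{0,1,\lambda}$ for \emph{generic} $\lambda$ only excludes a proper closed subset of parameters; since the only symmetries you have exhibited are translations (and at best scalings, via the torus of $M$), the normalized parameter $\lambda=\frac{c-a}{b-a}$ is a genuine invariant of your reductions, so you must treat \emph{every} $\lambda\neq 0,1$, not a generic one. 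What is missing is the inversion $w_\alpha$, i.e.\ the full M\"obius action of $M\cong GL_2$: under the identification of the relevant cells of $P_E\bk{\overline{F}}\lmod H_E\bk{\overline{F}}$ with triples in $\mathbb{P}^1\bk{\overline{F}}$ (the paper's remark following the lemma), $PGL_2$ is sharply $3$-transitive on $\mathbb{P}^1$, and that is precisely why all distinct triples form one orbit. The paper implements this by exhibiting the explicit element
\[
m_{a,b,c}=\check{\alpha}\bk{b-a}\check{\beta}\bk{\tfrac{\bk{a-b}^3}{\bk{a-c}\bk{b-c}}}x_\alpha\bk{c\tfrac{a-b}{\bk{c-a}\bk{b-c}}}w_\alpha x_\alpha\bk{\tfrac{a-c}{a-b}}\in M\bk{F},
\]
a M\"obius transformation carrying $\bk{a,b,c}$ to $\bk{1,0,\infty}$, and checking $\mu\bk{a,b,c}m_{a,b,c}\bk{w_2w_3x_{-\alpha_1}\bk{1}}^{-1}\in B_E\bk{\overline{F}}$. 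To repair your proof you should either quote $3$-transitivity of $PGL_2$ through the M\"obius identification, or produce such an element directly; the dimension and $S_3$-invariance considerations alone do not close the case.
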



\begin{Remark}
The proof of this lemma relies on the use of M\"obius transformations.
As $M_E\bk{\overline{F}}\cong \bk{GL_2\times GL_2\times GL_2}^0\bk{\overline{F}}$ we have a natural map
\begin{align*}
\bk{GL_2\times GL_2\times GL_2}^0\bk{\overline{F}} & \overset{\Upsilon}{\to} P_E\bk{\overline{F}}\lmod H_E\bk{\overline{F}} \\
m &\mapsto P_E\bk{\overline{F}} w_2\ m .
\end{align*}
We also note that $w_2 \bk{M_E\bk{\overline{F}} \cap B_E\bk{\overline{F}}} w_2 \subset P_E\bk{\overline{F}}$ and hence this map factors through
\[
\bk{B_0\times B_0\times B_0}^0\bk{\overline{F}} \lmod \bk{GL_2\times GL_2\times GL_2}^0\bk{\overline{F}} ,
\]
where $B_0$ denotes the Borel subgroup of $GL_2$.
This quotient admits an action of $GL_2$ from the right diagonally.
On the other hand, $M\cong GL_2$ acts from the right on $P_E\bk{\overline{F}}\lmod H_E\bk{\overline{F}}$.
The map $\Upsilon$ is evidently $GL_2$-equivariant.
Namely, for any $\bk{m_1,m_2,m_3}\in \bk{GL_2\times GL_2\times GL_2}^0\bk{\overline{F}}$ and any $m\in GL_2\bk{\overline{F}}$ it holds that
\[
\Upsilon\bk{\bk{m_1,m_2,m_3}\cdot m} = \Upsilon\bk{\bk{m_1,m_2,m_3}} \cdot m
\]

Recall that due to the Bruhat decomposition, $\bk{B_0\bk{\overline{F}}\lmod GL_2\bk{\overline{F}}}\cong \mathbb{P}^1\bk{\overline{F}}$.
This identification can be realized as follows
\[
x\longleftrightarrow B_0\bk{\overline{F}} \begin{pmatrix}0&1\\-1&0\end{pmatrix} \begin{pmatrix}1&x\\0&1\end{pmatrix},\quad \infty\longleftrightarrow B_0\bk{\overline{F}}.
\]
Hence, the right action of $M\bk{\overline{F}}$ on $P_E\bk{\overline{F}}\lmod H_E\bk{\overline{F}}$ is induced from the action of $GL_2\bk{\overline{F}}$ on triples of points in $\mathbb{P}^1\bk{\overline{F}}$.
In particular, using this projective coordinates, we note that $w_2w_1 = \Upsilon\bk{0,\infty,\infty}$, $w_2w_3 = \Upsilon\bk{\infty,0,\infty}$, $w_2w_4 = \Upsilon\bk{\infty,\infty,0}$ and $w_2w_3x_{-\alpha_1}\bk{1} = \Upsilon\bk{1,0,\infty}$.
As for the trivial orbit, $1$ is not in the image of $\Upsilon$ but $1\sim \Upsilon\bk{\infty,\infty,\infty}$.

Furthermore, the orbits of $GL_2\bk{\overline{F}}$ in $\mathbb{P}^1\bk{\overline{F}}\times \mathbb{P}^1\bk{\overline{F}}\times \mathbb{P}^1\bk{\overline{F}}$ correspond to the items in \cref{Lem:New orbit representatives}.
Namely, $\bk{\mathbb{P}^1\bk{\overline{F}}\times \mathbb{P}^1\bk{\overline{F}}\times \mathbb{P}^1\bk{\overline{F}}} \rmod GL_2\bk{\overline{F}}$ is given by:
\begin{enumerate}
\item $\set{\bk{a,a,a}\mvert a\in \mathbb{P}^1\bk{\overline{F}}}$.
\item
\begin{itemize}
\item $\set{\bk{a,a,c}\mvert a,c\in \mathbb{P}^1\bk{\overline{F}} \text{ distinct}}$.
\item $\set{\bk{a,b,b}\mvert a,b\in \mathbb{P}^1\bk{\overline{F}} \text{ distinct}}$.
\item $\set{\bk{a,b,a}\mvert a,b\in \mathbb{P}^1\bk{\overline{F}} \text{ distinct}}$.
\end{itemize}
\item $\set{\bk{a,b,c}\mvert a,b,c\in \mathbb{P}^1\bk{\overline{F}} \text{ distinct}}$.
\end{enumerate}
In fact, the lemma proves that there is a natural bijection
\begin{equation}
\bk{B_0\times B_0\times B_0}^0\bk{\overline{F}} \lmod \bk{GL_2\times GL_2\times GL_2}^0\bk{\overline{F}} \rmod GL_2\bk{\overline{F}} \longleftrightarrow P_E\bk{\overline{F}} \lmod H_E\bk{\overline{F}} \rmod G\bk{\overline{F}} .
\end{equation}

\end{Remark}

\begin{proof}
\begin{enumerate}
\item In this case $\mu\bk{a,a,a} = w_2w_1w_3w_4 x_\alpha\bk{a}\in G_2\bk{\overline{F}}$ and hence $\mu\bk{a,a,a}\sim 1$.

\item We prove for example that $\mu\bk{a,a,c}\sim w_2w_4$ for $a\neq c$.
In this case, let
\[
m\bk{a,a,c} = x_\alpha\bk{-a} w_\alpha \check{\beta}\bk{a-c} x_\alpha\bk{1}\in M\bk{F}\subset G\bk{F} .
\]
One checks that
\[
\mu\bk{a,a,c} m\bk{a,a,c} \bk{w_2w_4}^{-1} \in B_E\bk{\overline{F}}\subset P_E\bk{\overline{F}}
\]
and hence
\[
\mu\bk{a,a,c} \sim w_2w_4 .
\]



\item
Let 
\[
m_{a,b,c}=\check{\alpha}\bk{b-a} \check{\beta}\bk{\frac{\bk{a-b}^3}{\bk{a-c}\bk{b-c}}} x_\alpha\bk{c \frac{\bk{a-b}}{\bk{c-a}\bk{b-c}}} w_\alpha x_{\alpha}\bk{\frac{\bk{a-c}}{\bk{a-b}}} \in M\bk{F}\subset G\bk{F}.
\]
One checks that
\[
\mu\bk{a,b,c} m_{a,b,c} \bk{w_2w_3x_{-\alpha_1}\bk{1}}^{-1}\in B_E\bk{\overline{F}}\subset P_E\bk{\overline{F}},
\]
hence
\[
\mu\bk{a,b,c} \sim w_2w_3x_{-\alpha_1}\bk{1} \ .
\]
Since \cref{Lem: Dihua split orbits} gives a list of representatives of orbits, the other direction follows immediately.
\end{enumerate}
\end{proof}

For the rest of the paper we fix a triple $\bk{a,b,c}$ as in {\bf (CT)} and write $\mu_E=\mu\bk{a,b,c}$.

\begin{Cor}
\label{Cor: Stabilizer of the open orbit}
$\Stab_{G\bk{\overline{F}}}\bk{P_E\bk{\overline{F}}\mu_E} = T_{3\alpha+2\beta}\cdot \ker \Psi_E$.
\end{Cor}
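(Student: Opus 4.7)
The plan is to establish the equality by combining a dimension count with a direct verification of one inclusion. Under condition \textbf{(CT)} the triple $\bk{a,b,c}$ consists of three pairwise distinct elements of $\overline{F}$ in every listed case, so \cref{Lem:New orbit representatives}(3) places $\mu_E$ in the unique open $G\bk{\overline{F}}$-orbit on $X_E\bk{\overline{F}}$. By \cref{Lem: Dihua split orbits}(3) the stabilizer of any representative of this open orbit is $G\bk{\overline{F}}$-conjugate to $T_{3\alpha+2\beta}\cdot U^\mu$, which is connected of dimension $5$. The group $T_{3\alpha+2\beta}\cdot \ker \Psi_E$ is likewise connected of dimension $1+4=5$, since $\Psi_E$ is a nontrivial character of the five-dimensional connected unipotent group $U$ (so its kernel has codimension one) and $T_{3\alpha+2\beta}\cap U=1$. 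Hence it suffices to verify the inclusion
\[
T_{3\alpha+2\beta}\cdot \ker \Psi_E \subseteq \Stab_{G\bk{\overline{F}}}\bk{P_E\bk{\overline{F}}\mu_E} .
\]

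For the torus part I would write $\mu_E = w\cdot x$ with $w=w_2 w_1 w_3 w_4$ and $x = x_{\alpha_1}\bk{a} x_{\alpha_3}\bk{b} x_{\alpha_4}\bk{c}\in M_E$, and observe that $w$ carries the maximal torus $T$ of $G$ into the maximal torus $T_E$ of $H_E$, while conjugating a torus element past $x$ only produces additional factors in $M_E$. Consequently $\mu_E h_{3\alpha+2\beta}\bk{t}\mu_E^{-1}\in M_E\bk{\overline{F}} \subseteq P_E\bk{\overline{F}}$, so $T_{3\alpha+2\beta}$ stabilizes the coset.

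The substantive content is the unipotent containment: for $u=u\bk{r_1,r_2,r_3,r_4,r_5}\in U\bk{\overline{F}}$, one must show that $\mu_E u \mu_E^{-1}\in P_E\bk{\overline{F}}$ if and only if $\Psi_E\bk{u}=1$. I would conjugate $u$ through $\mu_E$ one root generator at a time, applying the Chevalley commutator formulas each time one passes an $x_{\alpha_i}\bk{\cdot}$-factor of $x$, then carrying the result through the Weyl element $w_2w_1w_3w_4$. The outcome is a product of root subgroups of $H_E$; a careful inspection shows that every such factor lies inside $P_E$ except for a single $x_{3\alpha+\beta}$-contribution, whose coefficient is precisely the linear form $r_4 - T_{\bk{a,b,c}} r_3 + D_{\bk{a,b,c}} r_2 - N_{\bk{a,b,c}} r_1$ appearing in the definition of $\Psi_E$. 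Thus $\mu_E u \mu_E^{-1}\in P_E$ iff this coefficient vanishes, i.e.\ iff $u\in\ker\Psi_E$, and the corollary follows by the dimension count.

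The main obstacle is purely combinatorial: keeping track of the Chevalley commutators when conjugating each $x_\gamma\bk{r_i}$ through $w_2w_1w_3w_4\, x_{\alpha_1}\bk{a} x_{\alpha_3}\bk{b} x_{\alpha_4}\bk{c}$, and verifying that the coefficients of $a,b,c$ appearing in the resulting $x_{3\alpha+\beta}$-term reassemble into the elementary symmetric functions $T_{\bk{a,b,c}}, D_{\bk{a,b,c}}, N_{\bk{a,b,c}}$ of $\bk{a,b,c}$. Once this identity is confirmed, everything else is formal.
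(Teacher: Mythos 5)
Your argument is sound and reaches the statement by a genuinely different route than the paper. You prove the single inclusion $T_{3\alpha+2\beta}\cdot\ker\Psi_E\subseteq\Stab_{G\bk{\overline{F}}}\bk{P_E\bk{\overline{F}}\mu_E}$ directly (torus part via $\alpha_i\bk{h_{3\alpha+2\beta}\bk{t}}=1$ and normalization of $T_E$ by $w_2w_1w_3w_4$; unipotent part via Chevalley commutators) and then upgrade it to an equality by noting that both sides are connected of dimension $5$, the right-hand side being conjugate to the connected group $T_{3\alpha+2\beta}\cdot U^\mu$ of \cref{Lem: Dihua split orbits}. The paper instead avoids any conjugation of $U$ through $\mu_E$: it takes the stabilizer of the standard open-orbit representative $w_2w_3x_{-\alpha_1}\bk{1}$ as known (namely $T_{3\alpha+2\beta}\cdot\ker\Psi_s$, from the split case treated in \cite{MR3284482}), uses the explicit element $m_{a,b,c}\in M\bk{F}$ of \cref{Lem:New orbit representatives} satisfying $P_E\bk{\overline{F}}w_2w_3x_{-\alpha_1}\bk{1}=P_E\bk{\overline{F}}\mu_E m_{a,b,c}$, and transports the stabilizer by conjugation, the only computational input being the compatibility $\Psi_E\bk{u}=\Psi_s\bk{m_{a,b,c}^{-1}um_{a,b,c}}$ and the fact that $T_{3\alpha+2\beta}=\mathcal{Z}\bk{M}$ is fixed by conjugation by $m_{a,b,c}$. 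What your route buys is independence from the split-case computation and from the explicit form of $m_{a,b,c}$; what it costs is that the entire burden lands on the commutator calculation you defer, namely that the unique factor of $\mu_E u\mu_E^{-1}$ transverse to $P_E$ has coefficient exactly $r_4-T_{\bk{a,b,c}}r_3+D_{\bk{a,b,c}}r_2-N_{\bk{a,b,c}}r_1$. That identity is true (it is exactly what makes $U=\ker\Psi_E\cdot U_{3\alpha+\beta}$ the right decomposition in the unfolding), but as written it is asserted rather than verified, so to make the proof self-contained you would still need to carry out that conjugation; also, phrase the criterion as the vanishing of the linear form (membership in the algebraic group $\ker\Psi_E$) rather than as $\Psi_E\bk{u}=1$, since the latter is a condition on a complex character rather than on $\overline{F}$-points.
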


\begin{proof}
Recall from \cite{MR3284482} that $\Psi_s$ is given by
\[
\Psi_s\bk{u\bk{r_1,r_2,r_3,r_4,r_5}} = \psi\bk{r_2+r_3} .
\]
and note that 
\[
\Stab_{G\bk{\overline{F}}}\bk{P_E\bk{\overline{F}}w_2w_3x_{-\alpha_1}\bk{1}} = T_{3\alpha+2\beta}\cdot \ker \Psi_s .
\]
We also note that $\ker\Psi_s$ and $\ker\Psi_E$ define algebraic groups over $F$.


From \cref{Lem:New orbit representatives}
\[
P_E\bk{\overline{F}}w_2w_3x_{-\alpha_1}\bk{1} = P_E\bk{\overline{F}}\mu_E m_{a,b,c}
\]
and hence
\[
\Stab_{G\bk{\overline{F}}}\bk{P_E\bk{\overline{F}}\mu_E} = m_{a,b,c} \Stab_{G\bk{\overline{F}}}\bk{P_E\bk{\overline{F}}w_2w_3x_{-\alpha_1}\bk{1}} m_{a,b,c}^{-1} .
\]
On the other hand,
\[
\Psi_E\bk{u} = \Psi_s\bk{m_{a,b,c}^{-1} u m_{a,b,c}}
\]
and hence
\[
\ker \Psi_E = m_{a,b,c}^{-1} \bk{\ker \Psi_s} m_{a,b,c} .
\]
Also, $T_{3\alpha+2\beta}=\mathcal{Z}\bk{M}$ and hence $m_{a,b,c}^{-1} T_{3\alpha+2\beta} m_{a,b,c} = T_{3\alpha+2\beta}$.
\end{proof}

We now move to describing the $G\bk{F}$ orbits in $X_E\bk{F}$.
Given $\mu\in H_E\bk{F}\subseteq H_E\bk{\overline{F}}$ we have a short exact sequence
\[
\shortexact{\Stab_{G\bk{\bar{F}}}\bk{\mu}}{G_2\bk{\bar{F}}}{P_E\bk{\overline{F}}\mu G\bk{\overline{F}}}
\]
Denote by $X_\mu$ the $G$-orbit of $P_E\bk{F} \mu\in X_E\bk{F}$.
We then have (\cite[II.4.7]{MR2723693}) a long exact sequence in the cohomology
\[
\xymatrix{
\set{1} \ar[r] & \Stab_{G\bk{\bar{F}}}\bk{\mu}^{\Gamma} \ar[r] & G_2\bk{F} \ar[r]& {X_\mu\bk{F}} \ar `r[d] `_l[lll] `^d[dlll] `^r[dll] [dll] \\
& \cohom{1}\bk{\Gamma_E, \Stab_{G\bk{\bar{F}}}\bk{\mu}} \ar[r] & \cohom{1}\bk{\Gamma_E, G_2\bk{\bar{F}}} \ar[r]& \cohom{1}\bk{\Gamma_E, X_\mu\bk{\bar{F}}}} .
\]
In particular, there is a bijection (of sets)
\[
X_\mu\bk{F}\rmod G_2\bk{F} \longleftrightarrow \Ker\coset{\cohom{1}\bk{\Gamma_E, \Stab_{G\bk{F}}\bk{\mu}} \rightarrow \cohom{1}\bk{\Gamma_E, G_2\bk{F}}} .
\]
Since $G_2$ is a split and reductive algebraic group, then according to \cite[\S 8]{MR1321649} it holds that $\cohom{1}\bk{\Gamma_E, G_2\bk{F}}=\set{1}$ and hence $X_\mu\bk{F}\rmod G_2\bk{F}$ is parametrized by $\cohom{1}\bk{\Gamma_E, \Stab_{G\bk{F}}\bk{\mu}}$.

\begin{Thm}
\label{Thm: G2-orbits}
The $G\bk{F}$-orbits in $P_E\bk{F}\lmod H_E\bk{F}$ are
\begin{enumerate}
%
%
\item \underline{$E=F\times F\times F$:} $P_E\bk{F}$, $P_E\bk{F}w_2w_1$, $P_E\bk{F}w_2w_3$, $P_E\bk{F}w_2w_4$ and $P_E\bk{F}w_2w_1w_3w_4 x_{\alpha_1}\bk{1}x_{\alpha_3}\bk{-1}$.

\item \underline{$E=F\times K$, $K=F\coset{\theta}$ a field:} $P_E\bk{F}$, $P_E\bk{F}w_2w_1$ and $P_E\bk{F}w_2w_1w_3w_4 x_{\alpha_3}\bk{\theta}x_{\alpha_4}\bk{\theta^\sigma}$.

\item \underline{$E=F\coset{\theta}$ a field:} $P_E\bk{F}$ and $P_E\bk{F}w_2w_1w_3w_4 x_{\alpha_1}\bk{\theta}x_{\alpha_3}\bk{\theta^\sigma}x_{\alpha_4}\bk{\theta^{\sigma^2}}$.
\end{enumerate}
\end{Thm}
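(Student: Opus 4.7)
The plan is to apply Galois descent to the $G\bk{\overline{F}}$-orbit decomposition given by \cref{Lem: Dihua split orbits} and \cref{Lem:New orbit representatives}. For each such orbit one looks for an $F$-rational representative $\mu$; the long exact cohomology sequence displayed just above the theorem statement, combined with $\cohom{1}\bk{\Gamma_E,G_2\bk{\overline{F}}}=1$, identifies the $G\bk{F}$-orbits in $X_\mu\bk{F}$ with $\cohom{1}\bk{\Gamma_E,\Stab_{G\bk{\overline{F}}}\bk{\mu}}$. The proof therefore has two parts: (i) show this cohomology is always trivial, so every Galois-stable $\overline{F}$-orbit with an $F$-point contributes exactly one $G\bk{F}$-orbit; and (ii) determine which $\overline{F}$-orbits are Galois-stable and admit an $F$-rational representative, noting that any $\overline{F}$-orbit containing an $F$-rational point is automatically Galois-stable.

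For (i), the possible stabilizers from \cref{Lem: Dihua split orbits} and \cref{Cor: Stabilizer of the open orbit} are $P=M\cdot U$, $L\cdot R$, and $T_{3\alpha+2\beta}\cdot\ker\Psi_E$. Each is an extension of an $F$-split reductive group---$M\cong GL_2$, $L\cong GL_2$, or the split torus $T_{3\alpha+2\beta}\cong\Gm$---by a unipotent $F$-subgroup. Applying the long exact sequence of non-abelian cohomology, Hilbert~90 (in the form $\cohom{1}\bk{F,GL_n}=1$ and $\cohom{1}\bk{F,\Gm}=1$) together with the vanishing of $\cohom{1}$ for unipotent groups in characteristic zero yields $\cohom{1}\bk{\Gamma_E,\Stab_{G\bk{\overline{F}}}\bk{\mu}}=1$ in all three cases. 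A small point here is that $\ker\Psi_E$ is a bona fide $F$-subgroup of $U$, which follows because $\Psi_E$ is $F$-rational by construction.

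For (ii), by the remark following \cref{Lem:New orbit representatives}, the $G\bk{\overline{F}}$-orbits correspond to diagonal $GL_2\bk{\overline{F}}$-orbits on $\bk{\mathbb P^1}^3\bk{\overline{F}}$, and the Galois group permutes the three factors exactly as it permutes the roots $\alpha_1,\alpha_3,\alpha_4$. For $E=F\times F\times F$ the action is trivial, so all five orbits survive and give the five listed representatives. For $E=F\times K$ Galois swaps the factors corresponding to $\alpha_3$ and $\alpha_4$: the orbits $\set{a=b\neq c}$ and $\set{a=c\neq b}$ are exchanged (so contribute no $F$-point), while $\set{b=c\neq a}$ (whose representative is $w_2w_1$) and the open orbit are individually fixed, giving three orbits. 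For $E$ a cubic field (Galois or not), $\sigma$ cyclically permutes the three factors, so all three ``two-equal'' orbits are permuted transitively and none is fixed, leaving only the trivial and open orbits.

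Explicit $F$-rational representatives of the open orbit come directly from the triples in \textbf{(CT)}: a short direct check shows that the coset $P_E\bk{\overline{F}}\mu_E$ is Galois-stable for those choices of $\bk{a,b,c}$ (in the Galois and split cases this reduces to $\mu_E\in H_E\bk{F}$), and the representatives $1$ and $w_2w_i$ of the degenerate orbits are manifestly $F$-rational. The main anticipated difficulty lies in the case analysis of step (ii)---pinning down the correct Galois action on $\bk{\mathbb P^1}^3$, verifying that the fixed ``two-equal'' orbit in the $F\times K$ case truly contains an $F$-point (witnessed by $GL_2$-translating any triple $\bk{a,b,b}$ with $a,b\in F$ distinct), and carrying out the short modulo-$P_E$ computation in the non-Galois cubic case confirming that the coset of $\mu_E$ is stable under the transposition $\tau$.
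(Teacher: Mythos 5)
Your proposal is correct and follows essentially the same route as the paper: Galois descent via the long exact sequence in nonabelian cohomology, triviality of $\cohom{1}$ of each stabilizer (an extension of a split reductive group by a unipotent group, killed by Hilbert 90 and unipotence in characteristic zero), and identification of which $G\bk{\overline{F}}$-orbits meet $H_E\bk{F}$. The only difference is one of exposition: you justify the orbit-counting step (ii) explicitly through the Galois action on triples in $\mathbb{P}^1\times\mathbb{P}^1\times\mathbb{P}^1$, whereas the paper simply asserts which orbits intersect $H_E\bk{F}$ and concentrates on the cohomological vanishing.
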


\begin{proof}
\begin{enumerate}
\item This is exactly \cref{Lem: Dihua split orbits} combined with \cref{Lem:New orbit representatives}.

\item
We start by noting that $P_E\bk{F}$, $P_E\bk{F}w_2w_4$ and $P_E\bk{F}w_2w_1w_3w_4 x_{\alpha_1}\bk{\theta}x_{\alpha_3}\bk{\theta^\sigma}$ are the only $G\bk{\overline{F}}$-orbits in $P_E\bk{\overline{F}}\lmod H_E\bk{\overline{F}}$ that intersects $H_E\bk{F}$, we need only show that for these $\mu$-s, $P_E\bk{F}\mu$ is one $G\bk{F}$-orbit.


\begin{itemize}
\item \underline{$\mu=1$:}
In this case, $\Stab_{G}\bk{\mu}=P$ and we have a short exact sequence
\[
\shortexact{U}{P}{M} .
\]
$\cohom{1}\bk{\Gamma_E,U\bk{F}}=\set{1}$ since $U$ is unipotent and $\cohom{1}\bk{\Gamma_E,M\bk{F}}=\set{1}$ due to Hilbert 90' and thus $\cohom{1}\bk{\Gamma_E,P\bk{F}}=\set{1}$.
Hence $X_1\bk{F}\rmod G_2\bk{F}=\set{1}$.

\item \underline{$\mu=w_2w_1$:}
In this case, $\Stab_{G}\bk{P_E\mu}=L\cdot R$ and we have a short exact sequence
\[
\shortexact{R}{\Stab_{G}\bk{P_E\mu}}{L} .
\]
$\cohom{1}\bk{\Gamma_E,R\bk{F}}=\set{1}$ since $R$ is unipotent and $\cohom{1}\bk{\Gamma_E,L\bk{F}}=\set{1}$ due to Hilbert 90' and thus $\cohom{1}\bk{\Gamma_E,\Stab_{G\bk{F}}\bk{P_E\bk{F}\mu}}=\set{1}$.
Hence $X_\mu\bk{F}\rmod G_2\bk{F}=\set{P_E\bk{F}\mu}$.

\item \underline{$\mu=P_E\bk{F}w_2w_1w_3w_4 x_{\alpha_3}\bk{\theta}x_{\alpha_4}\bk{\theta^\sigma}$:}
In this case, $\Stab_{G}\bk{P_E\mu}=T_{3\alpha+2\beta}\cdot \ker\Psi_E$ and we have a short exact sequence
\[
\shortexact{\ker\Psi_E}{\Stab_{G}\bk{P_E\mu}}{T_{3\alpha+2\beta}} .
\]
$\cohom{1}\bk{\Gamma_E,\bk{\ker\Psi_E}\bk{F}}=\set{1}$ since $\ker\Psi_E$ is unipotent and $\cohom{1}\bk{\Gamma_E,T_{3\alpha+2\beta}\bk{F}}=\set{1}$ due to Hilbert 90' and thus $\cohom{1}\bk{\Gamma_E,\Stab_{G\bk{F}}\bk{P_E\bk{F}\mu}}=\set{1}$.
Hence $X_\mu\bk{F}\rmod G_2\bk{F}=\set{P_E\bk{F}\mu}$.
\end{itemize}

\item This case is proven similarly.
\end{enumerate}
\end{proof}


\begin{Remark}
We recall that for $\mu\in H_E\bk{F}$
\[
\Stab_{G\bk{F}}\bk{P_E\bk{F}\mu} = \Stab_{G\bk{\overline{F}}}\bk{P_E\bk{\overline{F}}\mu} \cap G\bk{F} .
\]
\end{Remark}

\begin{Cor}
\begin{enumerate}
\item It holds that $\Stab_{G\bk{F}}\bk{P_E\bk{F}} = P\bk{F}$.
\item It holds that $\Stab_{G\bk{F}}\bk{P_E\bk{F}w_2w_1} = \Stab_{G\bk{\overline{F}}}\bk{P_E\bk{F}w_2w_3} = \Stab_{G\bk{F}}\bk{P_E\bk{F}w_2w_4} = L\cdot R$ if applicable.
\item It holds that $\Stab_{G\bk{F}}\bk{P_E\bk{F}\mu_E} = T_{3\alpha+2\beta}\cdot \ker \Psi_E$ .
\end{enumerate}
\end{Cor}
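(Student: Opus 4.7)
The plan is to derive each equality directly from the preceding remark, which identifies $\Stab_{G(F)}(P_E(F)\mu)$ with $\Stab_{G(\overline{F})}(P_E(\overline{F})\mu)\cap G(F)$, together with the descriptions of the geometric stabilizers coming from \cref{Lem: Dihua split orbits} and \cref{Cor: Stabilizer of the open orbit}. In each case the geometric stabilizer is an $F$-subgroup scheme of $G$, so taking $F$-points commutes with the intersection.

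First, for $\mu = 1$, \cref{Lem: Dihua split orbits} gives $\Stab_{G(\overline{F})}(P_E(\overline{F})) = P(\overline{F})$. Since $P$ is defined over $F$, intersecting with $G(F)$ yields $P(F)$, proving (1). For (2), the same lemma identifies the geometric stabilizer of each of the representatives $w_2w_1$, $w_2w_3$, $w_2w_4$ with $L\cdot R$, and both $L$ and $R$ are $F$-subgroups of $G$ (the Levi and derived unipotent subgroup of the non-Heisenberg parabolic $Q$). Hence the intersection with $G(F)$ equals $(L\cdot R)(F) = L(F)\cdot R(F)$, and the three stabilizers coincide as claimed.

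For (3), \cref{Cor: Stabilizer of the open orbit} identifies $\Stab_{G(\overline{F})}(P_E(\overline{F})\mu_E)$ with $T_{3\alpha+2\beta}\cdot \ker\Psi_E$. The torus $T_{3\alpha+2\beta} = \mathcal{Z}(M)$ is an $F$-subgroup of $G$, and, as observed inside the proof of \cref{Cor: Stabilizer of the open orbit}, $\ker\Psi_E$ is the group of $\overline{F}$-points of an algebraic subgroup of $U$ defined over $F$ (since $\Psi_E$, being an element of the $M(F)$-orbit parametrized by the $F$-algebra $E$, is the pull-back over $F$ of a character of $U$). Intersecting with $G(F)$ therefore gives exactly $T_{3\alpha+2\beta}(F)\cdot (\ker\Psi_E)(F)$, which is what we write as $T_{3\alpha+2\beta}\cdot \ker\Psi_E$ at the level of $F$-points.

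The only mildly delicate point in this argument is the verification, for case (3), that $\ker\Psi_E$ really descends to an $F$-form of a subgroup of $U$; once this is in hand, everything reduces to the general fact that if $H_1, H_2$ are $F$-subgroups of an $F$-group $G$ then $(H_1\cdot H_2)(\overline{F})\cap G(F) = H_1(F)\cdot H_2(F)$ provided $\cohom{1}(\Gamma_E, H_1\cap H_2) = 1$, which holds here since the intersections involved are either trivial or unipotent. No further computation is needed, and the three claims follow.
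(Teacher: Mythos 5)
Your proposal is correct and follows essentially the same (implicit) route as the paper: the corollary is an immediate consequence of the preceding remark $\Stab_{G\bk{F}}\bk{P_E\bk{F}\mu} = \Stab_{G\bk{\overline{F}}}\bk{P_E\bk{\overline{F}}\mu} \cap G\bk{F}$ together with \cref{Lem: Dihua split orbits}, \cref{Cor: Stabilizer of the open orbit}, and the fact (already noted in the proof of \cref{Cor: Stabilizer of the open orbit}) that $\ker\Psi_E$ is defined over $F$. Your cohomological caveat is harmless but slightly misplaced: since each stabilizer is a semidirect product of $F$-subgroups (unipotent normal factor, $F$-rational complement), the decomposition of $F$-points is automatic and no $\cohom{1}$ vanishing is actually needed.
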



%
%

\section{Unfolding of the Zeta Integral}
\label{Sec:Unfolding}
In this section we prove \cref{Thm:Unfolding}.
We also denote by $N_\beta$ the unipotent radical of the Borel subgroup $B\cap L$ of $L$.



\begin{proof}[Proof of \cref{Thm:Unfolding}]
For $\Real\bk{s}>>0$ it holds that
\begin{align*}
\frac{1}{j_E\bk{\chi,s}}\zint_E \bk{\chi, s, \varphi, f} &= \intl_{G\bk{F}\lmod G\bk{\A}} \varphi\bk{g} \Eisen_E\bk{\chi,s,f,g} dg \\
&= \intl_{G\bk{F}\lmod G\bk{\A}} \varphi\bk{g} \suml_{\gamma\in P_E\bk{F}\lmod H_E\bk{F}} f_s\bk{\gamma g} dg \\
&= \suml_{\mu\in P_E\bk{F}\lmod H_E\bk{F}\rmod G\bk{F}} I_\mu \bk{\varphi,f_s} ,
\end{align*}
where
\[
I_\mu\bk{\varphi, f_s} = \intl_{G^\mu\bk{F}\lmod G\bk{\A}} \varphi\bk{g} f_s\bk{\mu g} dg .
\]
We now show that $I_\mu\bk{\varphi,f_s}=0$ unless $\mu$ is a representative of the open orbit.

\begin{enumerate}
\item \underline{If $\mu=1$ then:}
In this case
\begin{align*}
I_\mu\bk{\varphi,f_s} &= \intl_{P\bk{F}\lmod G\bk{\A}} \varphi\bk{g} dg \\
&= \intl_{M\bk{F}U\bk{\A}\lmod G\bk{F}} f_s\bk{g} \bk{\intl_{U\bk{F}\lmod U\bk{\A}} \varphi\bk{ug} du} dg = 0,
\end{align*}
since $\varphi$ is cuspidal.

\item \underline{If $\mu\in \set{w_2w_1, w_2w_3, w_2w_4}$ then:}
In this case,
\begin{align*}
I_\mu\bk{\varphi,f_s} &= \intl_{L\bk{F}\cdot R\bk{F}\lmod G\bk{\A}} \varphi\bk{g} f_s\bk{\mu g} dg \\
&= \intl_{L\bk{F}\cdot R\bk{\A}\lmod G\bk{\A}} f_s\bk{\mu g} \bk{\intl_{R\bk{F}\lmod R\bk{\A}} \varphi\bk{rg} dr} dg .
\end{align*}

We recall from \cite[Theorem 5]{MR1020830} that
\[
\intl_{R\bk{F}\lmod R\bk{\A}} \varphi\bk{rg} dr = \suml_{\nu\in N_\beta\bk{F} \lmod L\bk{F}} W_{\psi}\bk{\varphi} \bk{\nu g} ,
\]
where $W_{\psi}\bk{\varphi}$ is the standard Whittaker coefficient of $\varphi$.
We then have
\begin{align*}
I_\mu\bk{\varphi,f_s} &=  \intl_{L\bk{F}\cdot R\bk{\A}\lmod G\bk{\A}} f_s\bk{\mu g} \bk{\suml_{\nu\in N_\beta\bk{F} \lmod L\bk{F}} W_{\psi}\bk{\varphi} \bk{\nu g}} dg \\
&= \intl_{N_\beta\bk{\A}\cdot R\bk{\A}\lmod G\bk{\A}} f_s\bk{\mu g} W_{\psi}\bk{\varphi} \bk{\nu g} \bk{\intl_{N_\beta\bk{F}\lmod N_\beta\bk{\A}} \psi\bk{n} dn} dg = 0 .
\end{align*}

\item \underline{If $\mu=\mu_E$ then:}
Fix $\mu=\mu_E$ to be the representative of the open orbit as in \cref{Thm: G2-orbits}.
Also let $T^\mu=T_{3\alpha+2\beta}$ and $U^\mu=\ker \Psi_E$ so that $\Stab_{G}\bk{P_E\mu}=T^\mu U^\mu$.
It holds that
\[
I_\mu\bk{\varphi, f_s}=\intl_{T^\mu\bk{F}U^\mu\bk{\A}\lmod G\bk{\A}} f_s\bk{\mu g} \bk{\intl_{U^\mu\bk{F}\lmod U^\mu\bk{\A}} \varphi\bk{ug} du} dg.
\]

We now expand the function given by the inner integral along the 1-dimensional subgroup generated by the root $3\alpha+\beta$.
Using cuspidality, we collapse the sum with the outer integration, and conclude that the above equals
\[
I_\mu\bk{\varphi, f_s}=\intl_{U^\mu\bk{\A}\lmod G\bk{\A}} f_s\bk{\mu g} \bk{\intl_{U\bk{F}\lmod U\bk{\A}} \varphi\bk{ug} \overline{\Psi_E\bk{u}} du} dg.
\]
Since $U=U^\mu\cdot U_{3\alpha+\beta}$ we have
\[
I_\mu\bk{\varphi, f_s}=\intl_{U\bk{\A}\lmod G\bk{\A}} \bk{\intl_{U\bk{F}\lmod U\bk{\A}} \varphi\bk{ug} \overline{\Psi_E\bk{u}} du} \bk{\intl_{\A} f_s\bk{\mu x_{3\alpha+\beta}\bk{r}g} \psi\bk{r} dr} dg .
\]
\end{enumerate}
\end{proof}

\section{Unramified Computation}
\label{Sec:Unramified Computation}
In this section we describe the unramified computation and recall some results from \cite{MR3284482}.
We postpone some of the more involved calculations to the appendices.
For this section and the appendices we fix a place $\nu\notin S$ and drop $\nu$ from all of the notations, i.e. we assume $F$ to be a local non-Archimedean field, $E$ an unramified Galois \'etale cubic algebra over $F$, $\pi$ an irreducible unramified representation of $G\bk{F}$ etc.
We fix on $G\bk{F}$ the unique Haar measure $\meas$ such that $\meas\bk{K}=1$, where $K=G\bk{\mO}$.

Recall the Satake isomorphism between the spherical Hecke algebra $\Hecke=\Hecke\bk{G,K}$ and the Grothendick ring $Rep\bk{\ldual{G}}$ described in \cite{MR1696481}.
Denote by $A_j\in \Hecke$ the elements corresponding to $Sym^j\bk{\st}$ under the Satake isomorphism.
In particular, for any unramified representation $\pi$ and a spherical vector $v_0\in\pi$ it holds that
\begin{equation}
\label{Eq: Satake Transform}
\intl_{G\bk{F}} A_j\bk{g}\pi\bk{g}v_0 \ dg = \Tr\bk{Sym^j\bk{\st}\bk{t_\pi}} v_0,
\end{equation}
where $t_\pi$ is the Satake parameter of $\pi$.

For any such $\pi$, the Satake isomorphism induces an algebra homomorphism
\begin{align*}
\Hecke &\to \C \\
f &\to \widehat{f}\bk{\pi} ,
\end{align*}
where $\widehat{f}\bk{\pi}$ is given by
\[
\intl_{G\bk{F}} f\bk{g}\pi\bk{g}v_0 \ dg = \widehat{f}\bk{\pi} v_0 .
\]
In particular, for any $f_1,f_2\in\Hecke$ it holds that $\widehat{f_1\conv f_2}=\widehat{f_1}\cdot\widehat{f_2}$.
The homomorphism $f\to\widehat{f}\bk{\pi}$ can be extended linearly to a map of formal power series $\Hecke\coset{\coset{T}}\to \C\coset{\coset{T}}$.
Let $T=q^{-s}$.

\begin{Prop}
For any finite order character $\chi$ of F, there exists a generating function $\Delta_{\chi,s}\in \Hecke\coset{\coset{q^{-s}}}$, uniformly converging on a right half plane, such that for any unramified representation $\pi$ with a spherical vector $v_0$ and \textbf{any} functional $l$ on $\pi$, it holds that
\begin{equation}
\intl_{G\bk{F}} \Delta_{\chi,s}\bk{g}\Lambda\bk{\pi\bk{g}v_0} dg = \Lfun\bk{s,\pi,\chi,\st} \Lambda\bk{v_0}
\end{equation}
for $\Real\bk{s}\gg 0$.
\end{Prop}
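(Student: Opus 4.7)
The plan is to define the generating function explicitly by
\[
\Delta_{\chi,s} := \sum_{j\ge 0} A_j\, \chi(\unif)^j\, q^{-js} \ \in\ \Hecke\coset{\coset{q^{-s}}},
\]
and then verify that it has the desired analytic and representation-theoretic properties. The definition is forced upon us: by the product formula for the local $\Lfun$-factor one has, for $\Real(s)\gg 0$,
\[
\Lfun\bk{s,\pi,\chi,\st}= \frac{1}{\det\bk{\Id-\st\bk{t_\pi}\chi\bk{\unif}q^{-s}}} = \sum_{j\ge 0}\Tr\bk{\Sym^j\bk{\st}\bk{t_\pi}}\chi\bk{\unif}^j q^{-js},
\]
and $A_j$ is precisely the preimage of the class function $\Tr\bk{\Sym^j\bk{\st}}$ under the Satake isomorphism $\Hecke\overset{\sim}{\to}\mathrm{Rep}\bk{\ldual{G}}$.

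For the analytic content one first observes that each $A_j$ is a bi-$K$-invariant, compactly supported function; moreover the support of $A_j$ is contained in the union of those double cosets $K\lambda(\unif)K$ whose cocharacter $\lambda$ is a sum of $j$ weights of $\st$. In particular, for any fixed $g\in G(F)$ only finitely many $A_j(g)$ are nonzero, so $\Delta_{\chi,s}(g)$ is a \emph{finite} sum of complex numbers at each point. To upgrade this pointwise finiteness to uniform convergence on compact subsets of $G(F)$, one invokes the standard bound $|A_j(g)|\le C\, q^{j\kappa(g)}$ coming from the description of the spherical Hecke algebra of $G_2$; this gives uniform convergence of $\sum_j A_j(g)\chi(\unif)^j q^{-js}$ on compacta, for all $\Real(s)$ larger than a bound depending on $\kappa$.

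To verify the integral identity, let $\pi$ be unramified with spherical $v_0$ and let $\Lambda$ be \emph{any} linear functional on $\pi$. Then
\[
\intl_{G(F)}\Delta_{\chi,s}(g)\,\Lambda\bk{\pi(g)v_0}\,dg
=\sum_{j\ge 0}\chi(\unif)^j q^{-js}\intl_{G(F)}A_j(g)\,\Lambda\bk{\pi(g)v_0}\,dg.
\]
For each $j$ the inner integrand has compact support, so we may pull $\Lambda$ outside and apply \cref{Eq: Satake Transform}:
\[
\intl_{G(F)}A_j(g)\,\Lambda\bk{\pi(g)v_0}\,dg
=\Lambda\!\left(\intl_{G(F)}A_j(g)\pi(g)v_0\,dg\right)
=\Tr\bk{\Sym^j\bk{\st}\bk{t_\pi}}\,\Lambda(v_0).
\]
Summing in $j$ and using the $L$-series expansion above yields $\Lfun(s,\pi,\chi,\st)\Lambda(v_0)$. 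The interchange of summation and integration is justified, for $\Real(s)\gg 0$, by absolute convergence: the series $\sum_j|\Tr(\Sym^j(\st)(t_\pi))|\,|\chi(\unif)|^j q^{-j\Real(s)}$ is dominated by an Euler factor on a suitable right half-plane. The crucial point---which is the whole reason the \emph{new way} method works---is that the spherical vector $v_0$ is a Hecke eigenvector, so the Hecke integral produces a \emph{scalar} multiple of $v_0$; thus the final answer is independent of the choice of $\Lambda$, and in particular independent of any choice of Whittaker-type functional, which is exactly what is needed to avoid the non-uniqueness issue flagged in the introduction.

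The main obstacle is the uniform convergence statement, as the coefficients $A_j$ are bi-$K$-invariant functions whose supports grow with $j$ and whose values at large elements grow as well. One must combine the combinatorial description of the support of $A_j$ (in terms of sums of weights of $\st$) with a uniform pointwise bound on $A_j$ to secure a half-plane of absolute convergence on which all the formal manipulations above are legitimate. Once convergence is established, the rest of the proof is a direct calculation using the Satake isomorphism and the defining Euler product for $\Lfun\bk{s,\pi,\chi,\st}$.
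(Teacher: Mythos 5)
Your proposal is correct and follows essentially the same route as the paper: define $\Delta_{\chi,s}=\sum_{k\ge 0}A_k\,\chi(\unif)^k q^{-ks}$ with $A_k$ the Satake preimage of $\Tr\bk{Sym^k(\st)}$, expand the local $\Lfun$-factor as the generating series of traces of symmetric powers, and use that $\int A_k(g)\pi(g)v_0\,dg$ is a scalar multiple of $v_0$ so the identity holds for an arbitrary functional $\Lambda$. The only difference is cosmetic: where you sketch support/growth bounds on the $A_k$ to justify the interchange of summation and integration, the paper simply cites the absolute convergence established in \cite{MR3284482}.
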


\begin{proof}
We have Poincar\'e's identity
\begin{align*}
\Lfun\bk{s,\pi,\chi,\st}&=\frac{1}{\det\bk{\Id-q^{-s}\st\bk{t_{\pi\boxtimes\chi}}}} = \prodl_{i=1}^7 \frac{1}{1-q^{-s} \chi\bk{\unif}\st\bk{t_\pi}_{ii}} = \\
&=\prodl_{i=1}^7 \bk{\suml_{k=0}^\infty \bk{q^{-s} \chi\bk{\unif}\st\bk{t_\pi}_{ii}}^k} = \suml_{k=0}^\infty \Tr\bk{Sym^k\bk{t_\pi}} \chi^k\bk{\unif} q^{-ks} 
\end{align*}
Plugging \cref{Eq: Satake Transform} in the above equality yields
\[
\Lfun\bk{s,\pi,\chi,\st} \Lambda\bk{v_0} = \Lambda \bk{\suml_{k=0}^\infty \bk{\intl_{G\bk{F}} A_k\bk{g}\pi\bk{g}v_0 dg} \chi^k\bk{\unif} q^{-ks}}.
\]
In \cite{MR3284482} we prove that this double integral is absolutely convergent (note that $\chi$ is unitary) for $\Real\bk{s}\gg 0$ and hence one can change the order of summation and integration.
The assertion then follows for
\[
\Delta_{\chi,s} = \suml_{k=0}^\infty A_k \chi^k\bk{\unif}q^{-ks} .
\]
\end{proof}

Furthermore, for any complex character $\Psi$ of $U\bk{F}$ and any $l\in \Hom_{U\bk{F}}\bk{\pi,\C_\Psi}$ it holds that
\begin{equation}
\Lfun\bk{s,\pi,\chi,\st} \Lambda\bk{v_0} = \intl_{U\bk{F}\lmod G\bk{F}} \Lambda\bk{\pi\bk{g}v_0} \Delta_{\chi,s}^{\Psi} \bk{g} dg .
\end{equation}

Thus, in order to prove \cref{Thm:Unramified Computation} it is enough to prove
\begin{equation}
\label{Eq: Local Unramified Identity}
\boxed{\Delta_{\chi,s}^{\Psi_E} \bk{g} = F^\ast\bk{\Psi_E,\chi,g,s}\quad \forall g\in G\bk{F} .} 
\end{equation}

We now reduce the proof of \cref{Eq: Local Unramified Identity} to the case of $\chi=1$.
For a given unitary character $\chi$ of $F^\times$ and $s\in\C$ we denote
\[
\chi_s\bk{t} = \chi\bk{t} \FNorm{t}^{s+\frac{5}{2}} .
\]

In \cref{Subsec: Computing F_s} we prove the following result.
\begin{Prop}
\label{Prop: Computing F_s}
For $g=\check{\alpha}\bk{t_1} \check{\beta}\bk{t_2} x_{\alpha}\bk{d}\in M\bk{F}$, let
\[
\alpha_1=\piece{1,&\FNorm{\frac{t_2}{t_1^2}a+d}_{F_{\alpha_1}}\leq 1\\ \frac{t_2}{t_1^2}a+d,& \FNorm{\frac{t_2}{t_1^2}a+d}_{F_{\alpha_1}}\more 1},\  
\alpha_2=\piece{1,&\FNorm{\frac{t_2}{t_1^2}b+d}_{F_{\alpha_3}}\leq 1\\ \frac{t_2}{t_1^2}b+d,& \FNorm{{\frac{t_2}{t_1^2}b+d}}_{F_{\alpha_3}}\more 1},\ 
\alpha_3=\piece{1,&\FNorm{\frac{t_2}{t_1^2}c+d}_{F_{\alpha_4}}\leq 1\\ \frac{t_2}{t_1^2}c+d,& \FNorm{\frac{t_2}{t_1^2}c+d}_{F_{\alpha_4}}\more 1}.
\]
Also denote $\alpha=\frac{t_1^3}{t_2}\alpha_1\alpha_2\alpha_3\in F$.
It holds that
\[
F\bk{\Psi_E,\chi,g,s}=\piece{0,&\FNorm{\alpha}_F>1\\
\chi_s\bk{\frac{t_2}{\alpha}} \frac{\Lfun\bk{s+\frac{3}{2},\chi}}{\Lfun\bk{s+\frac{5}{2},\chi}}
\bk{\FNorm{\alpha}-\chi_s\bk{\unif\alpha}q},& \FNorm{\frac{t_1^3}{t_2}}_F<1}
\]
\end{Prop}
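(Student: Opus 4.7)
The plan is to evaluate this one-variable integral by explicitly performing the Iwasawa decomposition of $\mu_E x_{3\alpha+\beta}(r)g$ with respect to $P_E K_E$, applying the transformation law of $f_s\in I_{P_E}(\chi,s)$, and then integrating in $r$.

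First I would reduce to the torus. Since $\alpha+(3\alpha+\beta)=4\alpha+\beta$ is not a root of $G_2$, the subgroups $x_\alpha$ and $x_{3\alpha+\beta}$ commute, and the eigenvalue of $3\alpha+\beta$ on $\check\alpha(t_1)\check\beta(t_2)$ is $t_1^3/t_2$. Thus $x_{3\alpha+\beta}(r)g = \check\alpha(t_1)\check\beta(t_2)\, x_{3\alpha+\beta}(rt_2/t_1^3)\, x_\alpha(d)$, and after the change of variables $r\mapsto r t_1^3/t_2$ the computation reduces to the Iwasawa decomposition of $\mu_E\, \check\alpha(t_1)\check\beta(t_2)\, x_\alpha(d)\, x_{3\alpha+\beta}(r)$.

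Next, I would exploit the description $M_E\simeq\{(g_1,g_2,g_3)\in\Res_{E/F}GL_2 \mvert \prod\det g_i\in\Gm\}$ together with the map $\Upsilon$ from Section~\ref{Section: G2-orbits}, under which $\mu_E$ corresponds to the triple $(a,b,c)$ in three copies of $\mathbb{P}^1$, and the action of $g\in G_2\subset GL_2$ is diagonal M\"obius action on the triple. In these coordinates the single $H_E$-Iwasawa decomposition of $\mu_E x_{3\alpha+\beta}(r)g$ splits into three independent one-dimensional Iwasawa decompositions, indexed by the three petals $\alpha_1,\alpha_3,\alpha_4$ of $D_4$ over the respective fields $F_{\alpha_1},F_{\alpha_3},F_{\alpha_4}$. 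The relevant entry of the $i$-th factor is precisely $\frac{t_2}{t_1^2}a_i+d$ (with $a_i\in\{a,b,c\}$), and the standard $SL_2$ Iwasawa decomposition of $w\cdot x(y)$ is trivial when $|y|\leq 1$ and produces the diagonal $\diag(y^{-1},y)$ times an element of $K$ otherwise — this is exactly what defines $\alpha_1,\alpha_2,\alpha_3$ in the statement.

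I would then apply the transformation law $f_s(p g)=\chi(\det_{M_E}p)\,|\det_{M_E}p|^{s+5/2}f_s(g)$ together with Hilbert~90 / the norm maps $\Nm_{F_{\alpha_i}/F}$ to collapse the three-petal Levi contribution into $\chi_s(t_2/\alpha)$, where $\alpha=(t_1^3/t_2)\alpha_1\alpha_2\alpha_3\in F$ is automatically Galois-invariant because the triple $(\alpha_1,\alpha_2,\alpha_3)$ is permuted by the Galois action on $E$. The dependence of the integrand on $r$ reduces to that of a standard unramified local zeta integral on $F$, splitting the domain according to $|r|$: the vanishing of $f_s$ outside the support predicted by \cref{Lem: Conditions on m} accounts for the case $|\alpha|_F>1$, and the geometric series on $|r|\leq |\alpha|^{-1}$ produces the $L$-factor ratio $\Lfun(s+3/2,\chi)/\Lfun(s+5/2,\chi)$, with the boundary truncation giving the $(|\alpha|-\chi_s(\varpi\alpha)q)$ factor.

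The main obstacle is Step~2: the reduction of the $H_E$-Iwasawa decomposition to three independent $SL_2$ decompositions in a genuinely quasi-split setting. For non-Galois $E$ the three petal subgroups are permuted cyclically by $\Gamma_E$ and defined over conjugate cubic subfields of $\bar F$, so one has to check that after extending scalars to the Galois closure, performing the three decompositions, and descending, the resulting integrand is indeed $F$-rational and its $f_s$-value is given by the claimed formula. The restrictions on $(a,b,c)$ imposed by \textbf{(CT)} (in particular $T_{(a,b,c)}=0$) and the unramifiedness of $E/F$ at $\nu\notin S$ are precisely what make this descent computation clean.
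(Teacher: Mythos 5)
Your proposal is correct and follows essentially the same route as the paper: after moving the torus and $x_\alpha(d)$ through $\mu_E$, the paper likewise reduces to three independent rank-one (petal) Iwasawa decompositions with entries $\frac{t_2}{t_1^2}a+d$, $\frac{t_2}{t_1^2}b+d$, $\frac{t_2}{t_1^2}c+d$, collapses the Levi contribution via $\det_{M_E}$ into $\chi_s(t_2/\alpha)$, and evaluates the remaining one-variable $r$-integral by splitting at $|r|\leq|\alpha|$ and summing the resulting geometric series. The descent concern you raise in your last paragraph is handled in the paper simply by case analysis on which petal entries exceed $1$ (noting that under \textbf{(CT)} the ``exactly one large'' configuration cannot occur for non-split $E$), and is moot at the unramified places where this proposition is applied, since there $E_\nu$ is always a Galois \'etale algebra.
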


It is clear that the power series of $F^\ast\bk{\Psi_E,\chi,g,s}$ with respect to $q^{-s}$ is of the form $\suml_{k=0}^\infty B_k \chi^k\bk{\unif}q^{-ks}$, where the coefficients $B_k\in\Hecke$ are independent of $\chi$.
Hence, in order to prove \cref{Eq: Local Unramified Identity}, it is enough to prove that
\begin{equation}
\label{Eq: Simplified Local Unramified Identity}
\boxed{\Delta_{1,s}^{\Psi_E} \bk{g} = F^\ast\bk{\Psi_E,1,g,s}\quad \forall g\in G\bk{F} .}
\end{equation}
We therefore devote the rest of this section and the appendices to prove this equality and drop $\chi$ from all notations.


\begin{Remark}
Since $\Delta_{s}^{\Psi_E} \bk{g}, F^\ast\bk{\Psi_E,g,s}\in \M_\Psi$, it suffices to prove the equality for $g\in M\bk{F}\cap B\bk{F}$.
\end{Remark}

While the right hand side of \cref{Eq: Simplified Local Unramified Identity} is given explicitly, we do not have an explicit formula for the generating function $\Delta\bk{g,s}$.
In \cite{MR3284482} we introduced a way to overcome this difficulty.

Recall the Cartan decomposition $G=KT^+K$, where
\[
T^{+}=\set{t\in{T}\mvert \FNorm{\gamma\bk{t}}\leq{1}\,\, \forall \gamma\in\Phi^{+}}.
\]
Let $D_s\in \Hecke\coset{\coset{q^{-s}}}$ be the bi-$K$-invariant defined on $T^+$ by
\[
D_s\bk{t}=\FNorm{\omega_1(t)}^{s+\frac{7}{2}} \qquad \forall t\in T^{+}. \ 
\]
The functions $D$ and $\Delta$ are closely related as can be seen from the following proposition \cite[Proposition 7.1 and 7.2]{MR3284482}.

\begin{Prop}
There exists $P_s\in \Hecke\coset{q^{-s}}$ and $s_0\in\R$ such that for $\Real{s}\more{s_0}$ it holds that
\[
D_s=\Delta_{s+\frac{1}{2}} \conv P_s \ .
\]
More precisely
\begin{equation}
P_s= \frac{P_0\bk{q^{-s-\frac{1}{2}}}A_0-P_1\bk{q^{-s-\frac{1}{2}}}A_1}
{\zfun\bk{s+\frac{3}{2}}\zfun\bk{s+\frac{7}{2}}\zfun\bk{s+\frac{1}{2}}},
\end{equation}
where
\[
P_0\bk{z}=
\frac{z^4}{q^2}+\bk{\frac{1}{q^2}+\frac{1}{q}}z^3+ \frac{z^2}{q}+\bk{\frac{1}{q}+1}z+1,
\quad 
P_1\bk{z}=\frac{z^2}{q} \ .
\]
Furthermore, there exists $s_0$ such that for any $\Real\bk{s}>s_0$ and $f\in\M_{\Psi_E}$, if $f\conv P_s\equiv 0$ then $f\equiv 0$.
\end{Prop}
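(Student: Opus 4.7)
My plan is to pass to the Satake side and divide. The Satake isomorphism $\mathcal{S}:\Hecke\xrightarrow{\sim}\operatorname{Rep}(\ldual{G})$, extended coefficient-wise to formal power series in $q^{-s}$, intertwines convolution with multiplication, so the convolution identity $D_s=\Delta_{s+\frac{1}{2}}\conv P_s$ is equivalent to $\widehat{D_s}(\pi)=\widehat{\Delta_{s+\frac{1}{2}}}(\pi)\cdot \widehat{P_s}(\pi)$ for every unramified spherical $\pi$. Thus the whole question reduces to computing the two left-hand Satake transforms and identifying their quotient with the claimed rational function.

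The transform $\widehat{\Delta_s}(\pi)$ is immediate: since $\widehat{A_k}(\pi)=\Tr(\operatorname{Sym}^k\st(t_\pi))$ by construction and $\sum_k\Tr(\operatorname{Sym}^kV(t))\,z^k=1/\det(\Id-z\,V(t))$ is the standard Poincar\'e generating-series identity, one obtains $\widehat{\Delta_s}(\pi)=1/\det(\Id-q^{-s}\st(t_\pi))$. The transform $\widehat{D_s}(\pi)$ is the technical heart. The function $D_s$ is bi-$K$-invariant with $D_s(t)=\FNorm{\omega_1(t)}^{s+\frac{7}{2}}$ on $T^+$; because $\gen{\omega_1,\check{\beta}}=0$, the weight $\omega_1$ extends to a character of the Levi $L$ of the non-Heisenberg parabolic $Q=LV$, so $D_s$ is, up to a $\modf{Q}^{\frac{1}{2}}$ factor, the normalized spherical vector of the degenerate principal series $\Ind_Q^G\FNorm{\omega_1}^{s+\frac{7}{2}}$. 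I would compute $\widehat{D_s}(\pi)$ either via a Macdonald-type formula or, following \cite{MR3284482}, by direct Iwasawa decomposition along $T^+$ and explicit summation over the dominant coweight lattice of $G_2$; separating $\st$ into its one-dimensional zero-weight line and its six nonzero weight lines accounts for the two terms in the numerator and yields
\[
\widehat{D_s}(\pi)=\frac{P_0\bk{q^{-s-\frac{1}{2}}}-P_1\bk{q^{-s-\frac{1}{2}}}\Tr(\st(t_\pi))}{\zfun(s+\frac{3}{2})\zfun(s+\frac{7}{2})\zfun(s+\frac{1}{2})\det(\Id-q^{-s-\frac{1}{2}}\st(t_\pi))}.
\]
Dividing by $\widehat{\Delta_{s+\frac{1}{2}}}(\pi)$ cancels the determinant factor and produces exactly the stated formula for $\widehat{P_s}(\pi)$, from which $P_s$ itself is recovered via Satake.

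For the injectivity statement, the key observation is that $P_0(0)=1$, $P_1(0)=0$, and $1/\zfun(s+\sigma)\to 1$ as $\Real(s)\to\infty$, so in $\Hecke\coset{q^{-s}}$ one may write $P_s=A_0+q^{-s}R_s$ with $R_s$ a Hecke-algebra valued polynomial in $q^{-s}$. Since every $f\in\M_{\Psi_E}$ is right-$K$-invariant and $\meas(K)=1$, convolution with $A_0$ acts as the identity on $\M_{\Psi_E}$, while convolution with $q^{-s}R_s$ contributes at each $g$ a finite sum of translates of $f$ weighted by coefficients of size $q^{-\Real(s)-\frac{1}{2}}$. Writing $f\conv P_s\equiv 0$ as $f=-f\conv q^{-s}R_s$ and ordering the values of $f$ on $T^+$ by dominance under the Cartan decomposition, a direct comparison of growth rates forces $f\equiv 0$ once $\Real(s)$ is large enough. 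Equivalently, $P_s$ is a unit in the completed Hecke algebra $\Hecke\coset{\coset{q^{-s}}}$ with inverse a convergent geometric series for $\Real(s)\gg 0$.

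The main obstacle is the explicit computation of $\widehat{D_s}(\pi)$: the Iwasawa-decomposition sum over the dominant coweight lattice of $G_2$ must be regrouped according to the $W$-orbits in the weight decomposition of $\st$, and isolating the zero-weight contribution from the six short-root contributions is what produces the nontrivial $A_1$ term in the numerator and pins down the polynomials $P_0$ and $P_1$.
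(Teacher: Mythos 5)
This proposition is not proved in the paper at all: it is quoted verbatim from \cite{MR3284482} (Propositions 7.1 and 7.2 of the split-case paper), so the relevant comparison is with that proof. Your strategy is the correct one and is essentially the one used there: since the Satake transform is an algebra isomorphism, the convolution identity is equivalent to $\widehat{D_s}\bk{\pi}=\widehat{\Delta_{s+\frac{1}{2}}}\bk{\pi}\cdot\widehat{P_s}\bk{\pi}$ for all unramified $\pi$, the transform of $\Delta_s$ is the Poincar\'e identity, and the transform of the claimed $P_s$ is $\bk{P_0-P_1\Tr\bk{\st\bk{t_\pi}}}/\bk{\zfun\bk{s+\frac{3}{2}}\zfun\bk{s+\frac{7}{2}}\zfun\bk{s+\frac{1}{2}}}$ because $\widehat{A_0}=1$ and $\widehat{A_1}=\Tr\circ\st$. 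The point where your write-up stops short of a proof is exactly the point you flag as the "technical heart": the evaluation of $\widehat{D_s}\bk{\pi}$, i.e.\ the sum $\sum_{\lambda}q^{-\bk{s+\frac{7}{2}}\gen{\omega_1,\lambda}}\widehat{\Id_{K\lambda\bk{\unif}K}}\bk{\pi}$ over dominant coweights of $G_2$, is asserted to produce the specific polynomials $P_0,P_1$ rather than computed; since the existence of \emph{some} $P_s$ is trivial and the content of the proposition is precisely these polynomials, this is the whole proof and cannot be left as a black box. Two smaller imprecisions: $D_s$ is the bi-$K$-invariant function with prescribed values on $T^+$ via the \emph{Cartan} decomposition, not the spherical section of $\Ind_Q^G\FNorm{\omega_1}^{s+\frac{7}{2}}$ (which is defined via Iwasawa); and in the injectivity argument the Neumann-series/contraction idea needs the a priori support and growth constraints on $f\in\M_{\Psi_E}$ coming from \cref{Lem: Conditions on m}, since elements of $\M_{\Psi_E}$ are not compactly supported and the iterated translates $f\conv\bk{q^{-s}R_s}^{\conv n}$ spread over growing sets. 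Both of these are repairable, and the overall architecture matches the cited proof.
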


\begin{Remark}
Recall from \cite{MR1696481} that $A_0=\Id_{K}$ and $A_1=q^{-3}\bk{\Id_{K}+\Id_{K\omega_1\bk{\unif}K}}$ .
\end{Remark}

\begin{Remark}
Since the Fourier transform is a map of $\Hecke$-modules, it holds that
\[
D_s^{\Psi_E} \equiv \Delta_{s+\frac{1}{2}}^{\Psi_E} \conv P_s
\]
\end{Remark}

\begin{Cor}

\Cref{Thm:Unramified Computation} follows from
\begin{equation}
\label{Eq: Refined Local Equality}
D_s^{\Psi_E} \equiv F^\ast\bk{\Psi_E,\cdot,s} \conv P_s .
\end{equation}
\end{Cor}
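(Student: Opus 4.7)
The plan is a short injectivity argument. First I would apply the $\Psi_E$-Fourier transform to the factorization $D_s = \Delta_{s+\frac{1}{2}}\conv P_s$ recalled above. Since $f\mapsto f^{\Psi_E}$ is a morphism of right $\Hecke$-modules, as noted in the remark immediately preceding the corollary, this gives
\[
D_s^{\Psi_E}\equiv \Delta_{s+\frac{1}{2}}^{\Psi_E}\conv P_s ,
\]
and subtracting the hypothesis \cref{Eq: Refined Local Equality} produces
\[
\bk{\Delta_{s+\frac{1}{2}}^{\Psi_E} - F^\ast\bk{\Psi_E,\cdot,s}}\conv P_s \equiv 0 .
\]

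Before invoking the injectivity clause of the preceding Proposition I have to verify that $F^\ast\bk{\Psi_E,\cdot,s}$ actually belongs to $\M_{\Psi_E}$. Right $K$-invariance is inherited from the spherical section $f_s$ by a change of variables in the defining integral over $r$. The left $\bk{U,\overline{\Psi_E}}$-equivariance is the routine calculation underlying any Whittaker-type Fourier coefficient: given $u_0\in U\bk{F}$, conjugate $u_0$ past $x_{3\alpha+\beta}\bk{r}$ and the Weyl element $\mu_E=w_2w_1w_3w_4x_{\alpha_1}\bk{a}x_{\alpha_3}\bk{b}x_{\alpha_4}\bk{c}$ using the commutation relations in $H_E$, absorb the piece landing in $P_E$ into $f_s$ on the left, and identify the resulting additive shift in $r$ with $\overline{\Psi_E\bk{u_0}}$ via the very definition of $\Psi_E$ in terms of the triple $\bk{a,b,c}$. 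With this verified, the injectivity clause then forces the pointwise identity $\Delta_{s+\frac{1}{2}}^{\Psi_E}\equiv F^\ast\bk{\Psi_E,\cdot,s}$ for $\Real\bk{s}$ large.

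To finish I would integrate this equality against $\Lambda\bk{\pi\bk{g}v_0}$ over $U\bk{F}\lmod G\bk{F}$. The defining property of $\Delta$ evaluates the left-hand side as $\Lfun\bk{s+\frac{1}{2},\pi,1,\st}\Lambda\bk{v_0}$, giving \cref{Thm:Unramified Computation} in the case $\chi=1$. The passage to arbitrary unramified $\chi$ is precisely the power series reduction already recorded in this section: both sides of the Local Unramified Identity expand as $\sum_k c_k\,\chi^k\bk{\unif}\,q^{-ks}$ with $c_k\in\Hecke$ independent of $\chi$, so the $\chi=1$ case determines the general case coefficient by coefficient. Meromorphic continuation in $s$ then removes the restriction $\Real\bk{s}\gg 0$. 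The true work bypassed by this corollary is the verification of \cref{Eq: Refined Local Equality} itself, where both $D_s^{\Psi_E}$ and the convolution $F^\ast\bk{\Psi_E,\cdot,s}\conv P_s$ must be evaluated explicitly on $M\bk{F}\cap B\bk{F}$ using \cref{Prop: Computing F_s}, \cref{Lem: Conditions on m}, and the explicit form of $P_s$, and matched term by term; this is where the argument bifurcates according to the type of the \'etale cubic algebra $E$ and constitutes the main obstacle, handled in the appendices.
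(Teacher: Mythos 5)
Your argument is correct and is exactly the route the paper takes (implicitly): apply the $\Hecke$-module property of the Fourier transform to $D_s=\Delta_{s+\frac{1}{2}}\conv P_s$, cancel $\conv P_s$ using the injectivity clause of the preceding Proposition, and then unwind the reductions already recorded in the section (the generating-function property of $\Delta$ and the power-series reduction from general unramified $\chi$ to $\chi=1$). Your explicit verification that $F^\ast\bk{\Psi_E,\cdot,s}\in\M_{\Psi_E}$, which is needed before the injectivity clause may be invoked, is a detail the paper leaves tacit.
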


In order to prove \cref{Eq: Refined Local Equality} we compute explicitly both sides in \cref{Sec: Convolution} and \cref{Sec: Fourier Transform}.
The method is similar to that of the split case but while more delicate issues arise in the computation.

\begin{Remark}
\cref{Eq: Refined Local Equality} is proved in the split case in \cite{MR3284482} for $\Psi_s$ as described there.
It can then be deduced for $\Psi_{F\times F\times F}$ by conjugation with $m_{a,b,c}$ given in \cref{Section: G2-orbits}.
Hence, we devote \cref{Sec: Convolution} and \cref{Sec: Fourier Transform} to proving \cref{Eq: Refined Local Equality} assuming that $E$ is non-split.
\end{Remark}

\section{Ramified Computation}
\label{Sec:Ramified Computation}
In this section we prove \cref{Thm:Ramified Computation}.
The proof of is similar to that of \cite{MR3284482} and we include it for the convenience of the reader.

\begin{proof}[Proof of \cref{Thm:Ramified Computation}]
Let $\mu=\mu_E$.
Recall from \cref{Thm:Unfolding} that 
\[
d_S\bk{\chi, s,\Psi_E,\varphi_S, f_S}= \int\limits_{U_S\backslash G_S} L_{\Psi_E} \bk{\varphi}\bk{g} F^\ast\bk{\Psi_E,\chi, g,s}\, dg=
j_E\bk{\chi, s} \int\limits_{U^\mu_S\backslash G_S} L_{\Psi_E} \bk{\varphi} \bk{g} f_s\bk{\mu_E g} dg \ .
 \]
We denote by $\chi_s$ the extension of $\bk{\chi\circ{\det}_{M_E}} \FNorm{{\det}_{M_E}}^{s+\frac{5}{2}}$ to $P$ trivial on $U$.
For $g\in G_\mu\bk{F_\nu}$ we have
\[
\chi_s^{\mu_E}\bk{g}=
\chi_s\bk{\mu_E p \mu_E^{-1}}.
\]
Since $\mu$ generates the open double coset in $H_E$, there is an inclusion  
\[
i:\ind^{G\bk{F_S}}_{G^{\mu_E}\bk{F_S}} \chi_s^{\mu_E} \hookrightarrow \Ind^{H_E\bk{F_S}}_{P_E\bk{F_S}}\chi_s
\]
defined  by $i(f_s)\bk{p \mu_E g}=\chi_s\bk{p}f_s\bk{g}$ and 
$i\bk{f_s}$ vanishes on all other double cosets $P\bk{F_S}\mu' G\bk{F_S}$.  

For any $\phi_S\in \Sch{F_S}$ define an action of $\phi_S$ on $\pi$ by
\[
\phi_S\ast \varphi = 
\int_{F_S} \phi_S\bk{r}\pi_S\bk{x_{2\alpha+\beta}\bk{r}} \varphi\, dr \ 
\]
It is easy to see that 
\[
L_{\Psi_E}\bk{\pi_S\bk{h_{3\alpha+2\beta}\bk{t}} \bk{\phi_S\ast \varphi}}=\hat{\phi}_S \bk{t} 
L_{\Psi_E}\bk{\pi_S\bk{h_{3\alpha+2\beta}\bk{t}} \varphi}, 
\]
where $\hat \phi_S$ is a Fourier transform of $\phi_S$.

Let us write 
\[
J_S\bk{s,\varphi}=\int\limits_{F^\times_S} L_{\Psi_E}\bk{\varphi}\bk{h_{3\alpha+2\beta}\bk{t}}\mu_s\bk{h_{3\alpha+2\beta}\bk{t}} d^\times t .
\]

\begin{Lem}
\label{inner:ramified:integral}
For any $s_0\in \C$ and any $\varphi\in \pi$ such that $L_{\Psi_E}\bk{\varphi}\neq 0$ there exists $\phi_S\in \Sch{F_S}$ such that $J_S\bk{s,\phi_S\ast\varphi}\neq 0$ around $s_0$.
\end{Lem}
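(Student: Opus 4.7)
The plan is to convert $J_S\bk{s, \phi_S \ast \varphi}$ into a one-variable Mellin-type integral via the displayed intertwining identity, and then to choose $\phi_S$ by a Fourier-inversion / bump-function construction so the resulting integral is non-vanishing at $s_0$. Concretely, the intertwining identity combined with the definition of $J_S$ yields
\[
J_S\bk{s, \phi_S \ast \varphi} = \intl_{F_S^\times} \hat{\phi}_S\bk{t} \, W_\varphi\bk{t} \, \mu_s\bk{h_{3\alpha+2\beta}\bk{t}} \, d^\times t,
\]
where $W_\varphi\bk{t} := L_{\Psi_E}\bk{\varphi}\bk{h_{3\alpha+2\beta}\bk{t}}$. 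Since the Fourier transform is a bijection on $\Sch{F_S}$, the task reduces to producing a Schwartz function $\hat{\phi}_S$ for which this weighted integral is nonzero at $s_0$.

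The crucial first step is to ensure $W_\varphi \not\equiv 0$ on $F_S^\times$; this will be the main obstacle. From the hypothesis $L_{\Psi_E}\bk{\varphi} \neq 0$ there is some $g \in G\bk{F_S}$ with $L_{\Psi_E}\bk{\varphi}\bk{g} \neq 0$. Using the Iwasawa decomposition $G\bk{F_S} = U\bk{F_S} M\bk{F_S} K_S$ together with the left $\bk{U\bk{F_S}, \Psi_E}$-equivariance
\[
L_{\Psi_E}\bk{\varphi}\bk{u m k} = \Psi_E\bk{u}\, L_{\Psi_E}\bk{\pi\bk{k}\varphi}\bk{m},
\]
nonvanishing of the functional forces the existence of $m_0 \in M\bk{F_S}$ and $k_0 \in K_S$ with $L_{\Psi_E}\bk{\pi\bk{k_0}\varphi}\bk{m_0} \neq 0$. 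Since $M \cong GL_2$ has center $T_{3\alpha+2\beta}$, writing $m_0 = h_{3\alpha+2\beta}\bk{t_0}\, m_0'$ with $m_0' \in M\cap \SL_2$ and replacing $\varphi$ by the right translate $\pi\bk{m_0' k_0}\varphi$ (which preserves the hypothesis $L_{\Psi_E}\bk{\varphi}\neq 0$), one arrives at $W_\varphi\bk{t_0} \neq 0$. Smoothness of $\varphi$ then delivers an open neighborhood $\Omega \subset F_S^\times$ of $t_0$ on which $W_\varphi$ is non-vanishing.

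Having secured $W_\varphi \neq 0$ on $\Omega$, the next step is to select $\hat{\phi}_S \in \Sch{F_S}$ compactly supported in $\Omega$ such that $\intl_\Omega \hat{\phi}_S\bk{t} \, W_\varphi\bk{t} \, \mu_{s_0}\bk{h_{3\alpha+2\beta}\bk{t}} \, d^\times t \neq 0$. In the non-Archimedean case, it suffices to take $\hat{\phi}_S$ to be the indicator of a sufficiently small open ball in $\Omega$ on which the integrand is locally constant and nonzero; in the Archimedean case, a bump function multiplied by the complex conjugate of $W_\varphi \mu_{s_0}\bk{h_{3\alpha+2\beta}(\cdot)}$ does the job. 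Then $\phi_S$, the inverse Fourier transform of $\hat{\phi}_S$, lies in $\Sch{F_S}$ by Plancherel.

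Finally, because $\hat{\phi}_S$ has compact support in $F_S^\times$ and $\mu_s\bk{h_{3\alpha+2\beta}\bk{\cdot}}$ depends holomorphically on $s$, the function $s\mapsto J_S\bk{s, \phi_S \ast \varphi}$ is holomorphic, so non-vanishing at $s = s_0$ extends to an open neighborhood of $s_0$. The main difficulty throughout is the first step — transferring the global non-vanishing of the functional $L_{\Psi_E}\bk{\varphi}$ to the one-dimensional slice along $T_{3\alpha+2\beta}$; once that reduction is in hand, the remainder of the argument is a routine Fourier-analytic construction.
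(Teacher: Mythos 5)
Your core mechanism is exactly the paper's: pass to the identity
\[
J_S\bk{s,\phi_S\ast\varphi}=\intl_{F_S^\times}\hat\phi_S\bk{t}\,L_{\Psi_E}\bk{\varphi}\bk{h_{3\alpha+2\beta}\bk{t}}\,\mu_s\bk{h_{3\alpha+2\beta}\bk{t}}\,d^\times t
\]
and then take $\hat\phi_S$ supported in a small relatively compact neighborhood of a point of $F_S^\times$ where the weight function is non-zero; holomorphy near $s_0$ then comes for free from the compact support. Where you deviate is your ``crucial first step,'' and two things there deserve comment. First, the decomposition $m_0=h_{3\alpha+2\beta}\bk{t_0}\,m_0'$ with $m_0'\in M\cap\SL_2$ is not available in general: under $M\cong GL_2$ the torus $T_{3\alpha+2\beta}$ is the centre, so $T_{3\alpha+2\beta}\cdot\bk{M\cap\SL_2}$ consists only of elements whose determinant is a square, a proper subgroup of $M\bk{F_S}$. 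The repair is cheaper than the argument you give: $L_{\Psi_E}\bk{\varphi}\bk{g_0}\neq 0$ already yields $L_{\Psi_E}\bk{\pi\bk{g_0}\varphi}\bk{1}\neq 0$ directly, with no Iwasawa decomposition needed. Second, and more substantively, any such step replaces $\varphi$ by a translate, so what you actually prove is the lemma for some translate of $\varphi$ rather than for the given $\varphi$; and this is unavoidable under your reading of the hypothesis, since if $L_{\Psi_E}\bk{\varphi}$ vanished identically on $T_{3\alpha+2\beta}\bk{F_S}$ while being non-zero elsewhere on $G\bk{F_S}$, then $J_S\bk{s,\phi_S\ast\varphi}=0$ for every $\phi_S$ and the literal statement would fail. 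The paper's (terse) proof instead implicitly reads $L_{\Psi_E}\bk{\varphi}\neq 0$ as non-vanishing of the coefficient at the identity and places the support of $\hat\phi_S$ near $1\in F_S^\times$. Your version is just as serviceable for the proof of the Ramified Computation theorem, where one only needs \emph{some} choice of data, but you should say explicitly that the conclusion holds for $\pi\bk{g_0}\varphi$ rather than for $\varphi$ itself.
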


\begin{proof} One has 
\[
\int\limits_{F^\times_S} L_{\Psi_E}\bk{\phi_S\ast\varphi}\bk{h_{3\alpha+2\beta}\bk{t}} \mu_s\bk{h_{3\alpha+2\beta}\bk{t}} d^\times t=
\int\limits_{F^\times_S} \hat \phi_S\bk{t}\,
L_{\Psi_E}\bk{\varphi}\bk{h_{3\alpha+2\beta}\bk{t}} \mu_s\bk{h_{3\alpha+2\beta}\bk{t}} d^\times t .
\]
Since the image of $F^\times_S$ inside $F_S$ is locally closed we may choose $\phi_S$ such that $\hat \phi_S$ is supported on a relatively compact neighborhood of $1\in F^\times_S$.
Choose $\phi_S$ such that the support of $\hat\phi_S$ is sufficiently small to ensure the non-vanishing of $J_S\bk{s,\phi_S\ast\varphi}$ around $s_0$.
\end{proof}

Consider the decomposition
\[
G_S=G^\mu_S \cdot T_S^c\cdot U_S^c\cdot K_S, \quad 
T_S^c=\set{x_{\beta}\bk{t}\mvert t\in k_S^\times}, \quad
U_S^c=\set{x_\alpha\bk{r_1}x_{\alpha+\beta}\bk{r_2}\mvert  r_i\in k_S}
\]
For any Schwarz function $\Phi_S$ on $\bk{T^c\cdot U^c\cdot K}_S$ define $f_s\bk{\Phi_S}\in  \ind^{G_S}_{G_S^{\mu_E}}\chi_s^{\mu_E}$ by
\[
f_s\bk{\Phi_S}\bk{g_\mu g}=\chi_s\bk{g_\mu} \Phi_S\bk{g}\quad \forall g_\mu\in G^\mu_S,\ g\in \bk{T^c\cdot U^c\cdot K}_S.
\]
Then for any $\varphi\in \pi$ it holds that
\begin{equation}
d_S\bk{\chi, s,\Psi_E,\varphi_S, f\bk{\Phi}_s}=J_S\bk{s,\Phi_S\ast\varphi}
\end{equation}

By the Dixmier-Malliavin theorem \cite{DM} there exists a Schwarz function $\Phi_S$ on  $\bk{T^c\cdot U^c\cdot K}_S$ 
and $\varphi\in \pi$ such that $L_{\Psi_s}\bk{\Phi_S\ast \varphi}\neq 0$.
Then for any $\phi_S\in \Sch{k_S}$

\[
d_S\bk{\chi, s,\Psi_E,\phi_S\ast\varphi_S, f\bk{\Phi_S}}=J_S\bk{s,\phi_S\ast \bk{\Phi\ast \varphi}}.
\]

By lemma \ref{inner:ramified:integral} there exists a Schwarz function 
$\phi_S\in \Sch{k_S}$ such that $d_S\bk{\chi, s,\Psi_E,\phi_S\ast\varphi_S, f_s\bk{\Phi_S}}$ 
is an entire function and does not vanish in a neighborhood of $s_0$.
\end{proof}

\section{Application: $\theta$-lift for $\bk{G_2,S_E}$}
\label{Sec: Application 1}
For this section let $E$ be a Galois cubic \'etale algebra over $F$.
We let $\chi_E$ be the finite order Hecke character associated to $E\rmod F$ by class field theory.
Fix
\[
n_E = \piece{2,& E=F\times F\times F \\ 1,& \text{otherwise}}.
\]

For any \'etale cubic algebra $E$ over $F$ and the choice of Chevalley-Steinberg system as in \cref{Sec: Preliminaries} defines a splitting of the exact sequence
\[
\shortexact{H_E^{ad}}{\Aut\bk{H_E}}{S_E}.
\]
We can then form the semidirect product $H_E\rtimes S_E$.
The centralizer of $S_E$ in $H_E$ is isomorphic to $G$, this gives rise to a dual reductive pair
\[
G\times S_E\hookrightarrow H_E\rtimes S_E .
\]
We denote the associated $\theta$-lift by $\theta_{S_E}$.
It was shown in \cite{MR1918673} that if a cuspidal representation $\pi$ of $G$ lies in the cuspidal image of the $\theta$-lift from $S_E$ then it supports the $\Psi_E$-Fourier coefficient and $\Lfun^S\bk{s,\pi,\chi_E,\st}$ admits a pole of order $n_E$ at $s=2$.
In order to prove the converse, we recall \cite[Proposition 5.1]{MR1918673}.
\begin{Prop}
\label{Prop: Minimal Representation}
For any standard section $f_s$, the Eisenstein series $\Eisen\bk{\chi_E, f_s, s, g}$ has at most a pole of order $n_E$ at $s=\frac{3}{2}$.
A pole of order $n_E$ is attained.
Moreover, the space of automorphic forms
\[
Span_{\C}\set{\bk{s-\frac{3}{2}}^{n_E} \Eisen_E^\ast\bk{\chi_E, s, f_s, g}\bigg{\vert}_{s=\frac{3}{2}}} \ ,
\]
is an irreducible square-integrable automorphic representation isomorphic to the minimal representation
$\Pi_E$ of $H_E$.
\end{Prop}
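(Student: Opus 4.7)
The plan is to analyze the poles of $\Eisen_E\bk{\chi_E, f_s, s, g}$ through its constant term along the Heisenberg parabolic $P_E$. By Langlands' constant term formula, this constant term equals a finite sum of the form $\sum_w M\bk{w, \chi_E, s} f_s$, indexed by minimal length representatives of $W_{M_E} \backslash W_{H_E} / W_{M_E}$ whose inverse sends the simple roots of $M_E$ to positive roots. I would invoke the Gindikin-Karpelevich identity to express each $M\bk{w, \chi_E, s}$ at unramified places as a product of ratios of local $\Lfun$-factors in $\chi_E$ and $\chi_E \circ \Nm_{E\rmod F}$. The normalization $j_E\bk{\chi_E, s}$ is engineered precisely so that, after this product, each $w$-summand becomes a finite product of ratios of completed global $\Lfun$-functions.

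Next, I would pin down the order of the pole at $s=\frac{3}{2}$ contributed by each Weyl element. The key inputs are the locations of poles and zeros of $\Lfun_F\bk{s,\chi_E}$ and $\Lfun_E\bk{s,\chi_E \circ \Nm_{E\rmod F}}$ at integer arguments, together with the fact that for non-trivial $\chi_E$ the corresponding twisted $\Lfun$-function is entire and non-vanishing at $s=1$. A careful bookkeeping would show that the maximal pole order at $s = \frac{3}{2}$ is $n_E$: the extra pole present in the split case $E = F \times F \times F$ is accounted for by the doubled factor $\Lfun_F\bk{s+\frac{3}{2}, \chi_E}^2$ in $j_E\bk{1,s}$ together with an additional Weyl element that becomes available precisely because the triality action on the Dynkin diagram is trivial in the split case. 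To show the bound is attained, I would choose a spherical standard section $f_s^0$, compute the leading term of its constant term explicitly, and verify non-vanishing of the leading Laurent coefficient.

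To identify the residual representation with the minimal representation $\Pi_E$, I would first verify square-integrability by examining the exponents of the residue of the constant term against Langlands' criterion. Irreducibility would follow by showing that each local residual quotient of the degenerate principal series $I_{P_E,\nu}\bk{\chi_{E,\nu}, s}$ at $s=\frac{3}{2}$ is irreducible, a standard calculation for degenerate principal series on quasi-split $D_4$. To match the residue with $\Pi_E$, I would compare local spherical Jacquet modules, or alternatively characterize $\Pi_E$ via its wave front set and show that the Fourier coefficients of the residue vanish along any nilpotent orbit strictly larger than the minimal one; this last vanishing follows from cuspidality of the inducing data combined with support considerations on the constant term.

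The \textbf{main obstacle} is the combinatorics of the intertwining operators for the triality-twisted $H_E$: one must work with the relative root system, keeping careful track of the fields of definition $F_\gamma$ so that the Gindikin-Karpelevich formula outputs the correct combination of $\Lfun_F$ and $\Lfun_E$ factors, matching the shape of $j_E\bk{\chi_E,s}$ across the different isomorphism classes of $E$. A secondary difficulty is the final identification step: pinning the residue down as \emph{exactly} the minimal representation, rather than some larger small automorphic representation, requires either a tight local analysis at every place or a global wave front set characterization of $\Pi_E$.
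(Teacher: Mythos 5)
The paper does not prove this proposition at all: it is quoted verbatim from \cite[Proposition 5.1]{MR1918673}, so there is no internal argument to measure your proposal against. Your outline --- constant term along $P_E$ via Langlands' formula, Gindikin--Karpelevich over the relative root system with attention to the fields of definition $F_\gamma$, Langlands' square-integrability criterion applied to the exponents of the residue, and identification of the residual representation with $\Pi_E$ by local analysis and wave-front considerations --- is exactly the strategy of that reference and of the earlier literature on minimal representations of split and quasi-split $D_4$, so the route is the right one.

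That said, as a proof the proposal is a program rather than an argument: the two steps you describe as ``careful bookkeeping'' and ``a standard calculation'' are where essentially all of the content lies. In particular, (i) the claim that the maximal pole order at $s=\tfrac{3}{2}$ is exactly $n_E$, and that it is attained, requires actually carrying out the intertwining-operator computation for each isomorphism class of $E$ (including verifying that the relevant ratios of completed $\Lfun$-functions of $\chi_E$ and $\chi_E\circ\Nm_{E\rmod F}$ do not produce accidental cancellation of the leading pole); and (ii) the identification of the residue with the \emph{minimal} representation is not a routine local irreducibility check --- it rests on the prior local construction and characterization of the minimal representation of quasi-split $D_4$ (Kazhdan--Savin in the split case and its quasi-split analogues), which must be imported as external input rather than rederived. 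If you intend this as a self-contained proof you would need to supply both; if you intend it as a reduction to known results, you should cite them explicitly, which is precisely what the paper does by invoking \cite{MR1918673}.
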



\begin{Thm}
\label{Conj: Theta lift}
The following are equivalent
\begin{enumerate}
\item
$\pi$ supports the $\Psi_E$-Fourier coefficient and $\Lfun^S\bk{s,\pi,\chi_E,\st}$ admits a pole  at $s=2$ of order $n_E$.
\item
$\theta_{S_E}(\pi)\neq 0$.
\end{enumerate}
\end{Thm}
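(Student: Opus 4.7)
Since the implication (2) $\Rightarrow$ (1) is already established in \cite{MR1918673}, it suffices to prove (1) $\Rightarrow$ (2). My plan is to exploit the Rankin-Selberg integral of \cref{Thm:Main theorem} with $\chi = \chi_E$ at the point $s = \tfrac{3}{2}$, converting the assumed pole of $\Lfun^S\bk{s,\pi,\chi_E,\st}$ at $s = 2$ into a non-vanishing period pairing of $\pi$ against the minimal representation $\Pi_E$ of $H_E$ provided by \cref{Prop: Minimal Representation}, and then to identify this pairing with the theta integral for the dual pair $\bk{G,S_E}$.

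First I would invoke \cref{Thm:Ramified Computation} to choose factorizable data $\varphi$ and $f$ so that $d_S\bk{\chi_E, s, \Psi_E, \varphi_S, f_S}$ is holomorphic and non-vanishing in a neighborhood of $s = \tfrac{3}{2}$. Then \cref{Thm:Main theorem} gives
\[
\zint_E\bk{\chi_E, s, \varphi, f} = \Lfun^S\bk{s+\tfrac{1}{2},\pi,\chi_E,\st}\cdot d_S\bk{\chi_E, s, \Psi_E, \varphi_S, f_S},
\]
so the hypothesis forces $\zint_E\bk{\chi_E, s, \varphi, f}$ to have a pole of order exactly $n_E$ at $s = \tfrac{3}{2}$; this saturates the a priori bound on the pole order of $\Eisen_E^\ast\bk{\chi_E, s, f, g}$ at $s = \tfrac{3}{2}$ given by \cref{Prop: Minimal Representation}.

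Multiplying the integral defining $\zint_E$ by $\bk{s-\tfrac{3}{2}}^{n_E}$ and letting $s \to \tfrac{3}{2}$, the rapid decay of the cusp form $\varphi$ on $G\bk{F}\lmod G\bk{\A}$ justifies passing the limit inside the integral, yielding
\[
0 \neq \intl_{G\bk{F}\lmod G\bk{\A}} \varphi\bk{g}\,\theta\bk{g}\,dg, \qquad \theta\bk{g} := \coset{\bk{s-\tfrac{3}{2}}^{n_E}\Eisen_E^\ast\bk{\chi_E, s, f, g}}_{s=\tfrac{3}{2}},
\]
and by \cref{Prop: Minimal Representation} the form $\theta$ belongs to an automorphic realization of the minimal representation $\Pi_E$ of $H_E$.

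The final step is to interpret this non-vanishing global period as $\theta_{S_E}\bk{\pi}\neq 0$. The Chevalley-Steinberg splitting fixed in \cref{Sec: Preliminaries} extends the automorphic realization of $\Pi_E$ to $H_E \rtimes S_E$ and provides the theta kernel for the dual pair $\bk{G,S_E} \hookrightarrow H_E \rtimes S_E$. The main technical obstacle I anticipate is a clean verification that the residual realization of $\Pi_E$ obtained from $\Eisen_E^\ast$ really does extend equivariantly to this $H_E \rtimes S_E$-realization, and that the above period coincides, up to a non-zero factor, with the defining integral of $\theta_{S_E}\bk{\pi}$; granted this identification, the non-vanishing of the period immediately yields $\theta_{S_E}\bk{\pi} \neq 0$, completing the proof.
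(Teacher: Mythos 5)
Your proposal is correct and follows essentially the same route as the paper: use \cref{Thm:Ramified Computation} to make $d_S$ holomorphic and non-vanishing near $s=\tfrac{3}{2}$, conclude from \cref{Thm:Main theorem} that $\zint_E$ has a pole of order $n_E$ there, and take the leading Laurent coefficient in \cref{Eq:Zeta Integral} to obtain a non-zero period of $\varphi$ against the residual realization of the minimal representation $\Pi_E$, which is the theta pairing. The paper states this last step in one line ("taking the residue implies the assertion"), whereas you flag the identification of the residual period with the defining integral of $\theta_{S_E}\bk{\pi}$ as a point to verify; this is implicit in the setup of \cite{MR1918673} and is not treated further in the paper.
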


\begin{proof}
As noted above, it remains to prove that (1) implies (2).
Assume that $\Lfun^S\bk{s,\pi,\chi_E,\st}$ admits a pole of order $n_E$.
According to \cref{Thm:Main theorem}, this implies that there exist $f$ and $\varphi$ such that $\zint_E \bk{\chi, s, \varphi, f}$ admits a pole of order $n_E$.
Taking the residue at \cref{Eq:Zeta Integral} implies the assertion.

\end{proof}

\begin{Remark}
In fact, in an ongoing project I proved that if $\Lfun\bk{s,\pi,\chi_E,\st}$ admits a pole at $s=\frac{3}{2}$ of order $n_E$ then $\pi$ supports the $\Psi_E$-Fourier coefficient and so one may drop this from the assumption in (1).
\end{Remark}

\appendix
\appendixpage
\section{Computing ${F^\ast\bk{\Psi_E,\chi,\cdot,s}\conv P_s\bk{\cdot}}$}
\label{Sec: Convolution}
\subsection{Computing $F^\ast\bk{\Psi_E,\chi,\cdot,s}$}
\label{Subsec: Computing F_s}
In this subsection we prove \cref{Prop: Computing F_s}.

\begin{Remark}
For a triple $r=\bk{r_1,r_3,r_4}$ we write $x_{\alpha}\bk{r}= x_{\alpha_1}\bk{r_1} x_{\alpha_3}\bk{r_3} x_{\alpha_4}\bk{r_4}$.
\end{Remark}

\begin{proof}[Proof of \cref{Prop: Computing F_s}]
Using left $\bk{P_E,\chi_s}$-invariance and right $K$-invariance we have
\begin{align*}
F^\ast\bk{\Psi_E,\chi,g,s}&=\intl_{F} f_s\bk{w_\beta w_\alpha x_\alpha\bk{e} x_{3\alpha+\beta}\bk{r} \check{\alpha}\bk{t_1} \check{\beta}\bk{t_2} x_{\alpha}\bk{d}} \psi\bk{r} dr=\\
&=\chi_s\bk{\frac{t_2^2}{t_1^3}} \intl_{F} f_s\bk{w_\beta x_{\beta}\bk{\frac{t_2}{t_1^3}r} x_{-\alpha}\bk{\frac{t_2}{t_1^2}e+d}} \psi\bk{r} dr .
\end{align*}

We continue to consider this integral case-by-case.
\begin{list}{\underline{\textbf{Case \Roman{qcounter}:}}}{\usecounter{qcounter}}
\item Assume that $\FNorm{\frac{t_2}{t_1^2}a+d}_{F_{\alpha_1}}, \FNorm{\frac{t_2}{t_1^2}b+d}_{F_{\alpha_3}}, \FNorm{\frac{t_2}{t_1^2}c+d}_{F_{\alpha_4}}\leq{1}$.

In this case we have
\begin{align*}
F^\ast\bk{\Psi_E,\chi,g,s}
&=\chi_s\bk{\frac{t_2^2}{t_1^3}} \intl_{F} f_s\bk{w_\beta x_{\beta}\bk{\frac{t_2}{t_1^3}r}} \psi\bk{r} dr.
\end{align*}
Denote $\alpha=\frac{t_1^3}{t_2}$.
We split the integral in two
\begin{align*}
&\intl_{F} f_s\bk{w_\beta x_{\beta}\bk{\frac{t_2}{t_1^3}r}} \psi\bk{r} dr =  \intl_{\alpha\mO} f_s\bk{w_\beta x_{\beta}\bk{\frac{r}{\alpha}}} \psi\bk{r} dr + \intl_{F\setminus\alpha\mO} f_s\bk{w_\beta x_{\beta}\bk{\frac{r}{\alpha}}} \psi\bk{r} dr .
\end{align*}

Considering the first integral yields
\begin{align*}
\intl_{\alpha\mO} f_s\bk{w_\beta x_{\beta}\bk{\frac{r}{\alpha}}} \psi\bk{r} dr=
\intl_{\alpha\mO} \psi\bk{r} dr =
\piece{\FNorm{\alpha}_F,& \FNorm{\alpha}_F\leq 1 \\ 0,& \FNorm{\alpha}_F\more 1} ,
\end{align*}
while the second integral equals
\begin{align*}
\intl_{F\setminus\alpha\mO} f_s\bk{w_\beta x_{\beta}\bk{\frac{r}{\alpha}}} \psi\bk{r} dr
&=\intl_{F\setminus\alpha\mO} f_s\bk{ x_{-\beta}\bk{\frac{r}{\alpha}}} \psi\bk{r} dr=\\
&=\intl_{F\setminus\alpha\mO} f_s\bk{ x_{\beta}\bk{\frac{\alpha}{r}} \check{\beta}\bk{\frac{\alpha}{r}}} \psi\bk{r} dr=\\
&=\chi_s\bk{\alpha} \intl_{F\setminus\alpha\mO} \chi_s\bk{\frac{1}{r}} \psi\bk{r} dr .
\end{align*}
Since $\chi_s$ is $\mO^{\times}$-invariant this equals
\[
\intl_{F\setminus\alpha\mO} \chi_s\bk{\frac{1}{r}} \psi\bk{r} dr=\sum_{j=1-val\bk{\alpha}}^{\infty} \chi_s\bk{\unif^{j}} \intl_{\FNorm{r}=q^j} \psi\bk{r} dr .
\]
Recall that
\[
\intl_{\FNorm{r}=q^j} \psi\bk{r} dr=
\piece{0,& j>1\\ -1,& j=1 \\ q^j\bk{1-q^{-1}},& j\leq 0} .
\]
Inserting this to the previous expression yields
\begin{align*}
\intl_{F\setminus\alpha\mO} \chi_s\bk{\frac{1}{r}} \psi\bk{r} dr=\piece{0,& \FNorm{\alpha}_F>1\\ -\chi_s\bk{\unif},&\FNorm{\alpha}_F=1\\ \bk{1-q^{-1}}\bk{\frac{1-\chi_s\bk{\frac{1}{\alpha}}\FNorm{\alpha}_F}{1-\chi_s\bk{\unif^{-1}}\FNorm{\unif}_F}}-\chi_s\bk{\unif},&  \FNorm{\alpha}_F<1} .
\end{align*}


Combining all of the above yields
\[
F^\ast\bk{\Psi_E,\chi,g,s}=\piece{0,&\FNorm{\alpha}_F>1\\ \chi_s\bk{t_2}\bk{1-\chi_s\bk{\unif}},&\FNorm{\alpha}_F=1\\ 
\chi_s\bk{\frac{t_2}{\alpha}}\bk{\FNorm{\alpha}_F+\chi_s\bk{\alpha}\bk{\bk{1-q^{-1}}\frac{1-\chi_s\bk{\frac{1}{\alpha}}\FNorm{\alpha}_F}{1-\chi_s\bk{\unif^{-1}}\FNorm{\unif}_F}-\chi_s\bk{\unif}}},& \FNorm{\frac{t_1^3}{t_2}}_F<1} .
\]
Note that
\[
\chi_s\bk{\frac{t_2}{\alpha}}\bk{\FNorm{\alpha}_F+\chi_s\bk{\alpha}\bk{\bk{1-q^{-1}}\frac{1-\chi_s\bk{\frac{1}{\alpha}}\FNorm{\alpha}_F}{1-\frac{1}{\chi_s\bk{\unif}q}}-\chi_s\bk{\unif}}} = \chi_s\bk{\frac{t_2}{\alpha}} \frac{\Lfun\bk{s+\frac{3}{2},\chi}}{\Lfun\bk{s+\frac{5}{2},\chi}}
\bk{\FNorm{\alpha}_F-\chi_s\bk{\unif\alpha}q}
\]
and hence we may conclude that
\[
F^\ast\bk{\Psi_E,\chi,g,s}=\piece{0,&\FNorm{\alpha}_F>1\\
\frac{\chi_s\bk{t_2}}{\Lfun\bk{s+\frac{5}{2},\chi}},&\FNorm{\alpha}_F=1\\
\chi_s\bk{\frac{t_2}{\alpha}} \frac{\Lfun\bk{s+\frac{3}{2},\chi}}{\Lfun\bk{s+\frac{5}{2},\chi}}
\bk{\FNorm{\alpha}_F-\chi_s\bk{\unif\alpha}q},& \FNorm{\frac{t_1^3}{t_2}}_F<1}
\]


\item
The case where $\FNorm{\frac{t_2}{t_1^2}a+d}_{F_{\alpha_1}}\more 1$, $\FNorm{\frac{t_2}{t_1^2}b+d}_{F_{\alpha_3}}\leq 1$ and $\FNorm{\frac{t_2}{t_1^2}c+d}_{F_{\alpha_4}}\leq 1$ (or the cases equivalent to it) can not happen under \textbf{(CT)} for $E$ non-split.

\item 
Assume $\FNorm{\frac{t_2}{t_1^2}a+d}_{F_{\alpha_1}}\leq 1$, $\FNorm{\frac{t_2}{t_1^2}b+d}_{F_{\alpha_3}}\more 1$ and $\FNorm{\frac{t_2}{t_1^2}c+d}_{F_{\alpha_4}}\more 1$ (the cases equivalent to it follow similarly).
We then have
\begin{align*}
F^\ast\bk{\Psi_E,\chi,g,s}
&=\chi_s\bk{\frac{t_2^2}{t_1^3}} \intl_{F} f_s\bk{w_\beta x_{\beta}\bk{\frac{t_2}{t_1^3}r} x_{-\alpha_1}\bk{\frac{t_2}{t_1^2}b+d} x_{-\alpha_3}\bk{\frac{t_2}{t_1^2}c+d}} \psi\bk{r} dr=\\
&= \chi_s\bk{\frac{t_2^2}{t_1^3}} \intl_{F} f_s\bk{w_\beta x_{\beta}\bk{\frac{t_2}{t_1^3}r\bk{\frac{1}{\frac{t_2}{t_1^2}b+d}\cdot\frac{1}{\frac{t_2}{t_1^2}c+d}}}} \psi\bk{r} dr
\end{align*}
Denote $\alpha=\frac{t_1^3}{t_2}\bk{\frac{t_2}{t_1^2}b+d}\bk{\frac{t_2}{t_1^2}c+d}$, we then have as before
\[
\intl_{F} f_s\bk{w_\beta x_{\beta}\bk{\frac{t_2}{t_1^3}r\bk{\frac{1}{\frac{t_2}{t_1^2}b+d}\cdot\frac{1}{\frac{t_2}{t_1^2}c+d}}}} \psi\bk{r} dr = \intl_{\alpha\mO}\psi\bk{r} dr + \chi_s\bk{\alpha} \intl_{F\setminus\alpha\mO} \chi_s\bk{\frac{1}{r}} \psi\bk{r} dr .
\]


Repeating the evaluation of the first case yields the assertion.

\item 
Assume $\FNorm{\frac{t_2}{t_1^2}a+d}_{F_{\alpha_1}}\more 1$, $\FNorm{\frac{t_2}{t_1^2}b+d}_{F_{\alpha_3}}\more 1$ and $\FNorm{\frac{t_2}{t_1^2}c+d}_{F_{\alpha_4}}\more 1$.
We then have
\begin{align*}
F^\ast\bk{\Psi_E,\chi,g,s}
&=\chi_s\bk{\frac{t_2^2}{t_1^3}} \intl_{F} f_s\bk{w_\beta x_{\beta}\bk{\frac{t_2}{t_1^3}r} x_{-\alpha}\bk{\frac{t_2}{t_1^2}e+d}} \psi\bk{r} dr=\\
&=\chi_s\bk{\frac{\frac{t_2^2}{t_1^3}}{\Nm\bk{\frac{1}{\frac{t_2}{t_1^2}e+d}}}} \intl_{F} f_s\bk{w_\beta x_{\beta}\bk{\frac{\frac{t_2}{t_1^3}r}{\Nm\bk{\frac{t_2}{t_1^2}e+d}}} x_{\alpha}\bk{\frac{t_2}{t_1^2}e+d}} \psi\bk{r} dr,
\end{align*}
Denote $\alpha=\frac{t_1^3}{t_2}\bk{\frac{t_2}{t_1^2}a+d}\bk{\frac{t_2}{t_1^2}b+d}\bk{\frac{t_2}{t_1^2}c+d}$.
It holds that
\[
\intl_{F} f_s\bk{w_\beta x_{\beta}\bk{\frac{r}{\alpha}} x_{\alpha}\bk{\frac{t_2}{t_1^2}e+d}} \psi\bk{r} dr = \intl_{\alpha\mO}\psi\bk{r} dr + \chi_s\bk{\alpha} \intl_{F\setminus\alpha\mO} \chi_s\bk{\frac{1}{r}} \psi\bk{r} dr .
\]

Repeating the evaluation of the first case yield the assertion once again.
\end{list}

\end{proof}

As explained in \cref{Sec:Unramified Computation}, in what follows we may assume that $\chi=\Id$.
We now obtain a more explicit formula for $F^\ast\bk{\Psi_E,\Id,g,s}$.

\begin{Cor}
\label{Cor: Local factor}
Let $g=h_\alpha\bk{t_1}h_{\beta}\bk{t_2} x_\alpha\bk{d}$.
\begin{itemize}
\item \underline{$E=F\times K$:}
For $d=0$ it holds that
\[
F^\ast\bk{\Psi_E,\Id,g,s} = \piece{
\frac{\zfun\bk{5s-1}}{\zfun\bk{5s}} \FNorm{t_2}_F^{10s-1} \FNorm{t_1}_F^{3-15s} \bk{1-\FNorm{\frac{t_1^3}{t_2}}_F^{5s-1}q^{1-5s}},& \FNorm{\frac{t_2}{t_1^2}}_F\leq 1 \\
\frac{\zfun\bk{5s-1}}{\zfun\bk{5s}} \FNorm{t_2}_F \FNorm{t_1}_F^{5s-1} \bk{1-\FNorm{\frac{t_2}{t_1}}_F^{5s-1}q^{1-5s}},& \FNorm{\frac{t_2}{t_1^2}}_F\more 1} 
\]
and for $d\notin \mO$, it holds that
\[
F^\ast\bk{\Psi_E,\Id,g,s} = \piece{
\frac{\zfun\bk{5s-1}}{\zfun\bk{5s}} \FNorm{t_2}_F \FNorm{\frac{d}{t_1}}_F^{1-5s} \bk{1-\FNorm{\frac{d t_2}{t_1}}_F^{5s-1}q^{1-5s}},& \FNorm{\frac{dt_1^2}{t_2}}_F\leq 1 \& \FNorm{\frac{d t_2}{t_1}}_F\leq 1 \\
\frac{\zfun\bk{5s-1}}{\zfun\bk{5s}} \FNorm{t_2}_F^{10s-1} \FNorm{dt_1}_F^{3-15s} \bk{1-\FNorm{\frac{d^3t_1^3}{t_2}}_F^{5s-1}q^{1-5s}},& \FNorm{\frac{dt_1^2}{t_2}}_F\more 1 \& \FNorm{\frac{d^3t_1^3}{t_2}}_F\leq 1 \\
0,& \FNorm{\frac{dt_1^2}{t_2}}_F\leq 1 \& \FNorm{\frac{d t_2}{t_1}}_F\more 1 \\
& \FNorm{\frac{dt_1^2}{t_2}}_F\more 1 \& \FNorm{\frac{d^3t_1^3}{t_2}}_F\more 1} .
\]
\item \underline{$E$ a field:}
For $d=0$ it holds that
\[
F^\ast\bk{\Psi_E,\Id,g,s} = \piece{
\frac{\zfun\bk{5s-1}}{\zfun\bk{5s}} \FNorm{t_2}_F^{10s-1} \FNorm{t_1}_F^{3-15s} \bk{1-\FNorm{\frac{t_1^3}{t_2}}_F^{5s-1}q^{1-5s}},& \FNorm{\frac{t_2}{t_1^2}}_F\leq 1 \\
\frac{\zfun\bk{5s-1}}{\zfun\bk{5s}} \FNorm{t_2}_F^{2-5s} \FNorm{t_1}_F^{15s-3} \bk{1-\FNorm{\frac{t_2^2}{t_1^3}}_F^{5s-1}q^{1-5s}},& \FNorm{\frac{t_2}{t_1^2}}_F\more 1 }
\]
and for $d\notin \mO$, it holds that
\[
F^\ast\bk{\Psi_E,\Id,g,s} = \piece{
\frac{\zfun\bk{5s-1}}{\zfun\bk{5s}} \FNorm{t_2}_F^{2-5s} \FNorm{t_1}_F^{15s-3} \bk{1-\FNorm{\frac{t_2^2}{t_1^3}}_F^{5s-1}q^{1-5s}},& \FNorm{\frac{dt_1^2}{t_2}}_F\leq 1 \& \FNorm{\frac{t_2^2}{t_1^3}}_F\leq 1 \\
\frac{\zfun\bk{5s-1}}{\zfun\bk{5s}} \FNorm{t_2}_F^{10s-1} \FNorm{d t_1}_F^{3-15s} \bk{1-\FNorm{\frac{d^3t_1^3}{t_2}}_F^{5s-1}q^{1-5s}},& \FNorm{\frac{dt_1^2}{t_2}}_F\more 1 \& \FNorm{\frac{d^3t_1^3}{t_2}}_F\leq 1 \\
0,& \FNorm{\frac{dt_1^2}{t_2}}_F\leq 1 \& \FNorm{\frac{t_2^2}{t_1^3}}_F\more 1 \\
& \FNorm{\frac{dt_1^2}{t_2}}_F\more 1 \& \FNorm{\frac{d^3t_1^3}{t_2}}_F\more 1} .
\]
\end{itemize}
\end{Cor}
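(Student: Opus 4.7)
The plan is to specialize \cref{Prop: Computing F_s} to $\chi = \Id$, reparameterize, and perform the case analysis on the valuations of $d$ and $t_2/t_1^2$. Setting $\chi = \Id$ gives $\chi_s(t) = \FNorm{t}_F^{s+5/2}$, and the factoring $\FNorm{\alpha}_F - q^{-s-3/2}\FNorm{\alpha}_F^{s+5/2} = \FNorm{\alpha}_F(1 - q^{-s-3/2}\FNorm{\alpha}_F^{s+3/2})$ reshapes the formula of that proposition (valid when $\FNorm{t_1^3/t_2}_F < 1$) into
\[
F\bk{\Psi_E, \Id, g, s} = \FNorm{t_2}_F^{s+5/2} \FNorm{\alpha}_F^{-s-3/2} \frac{\zfun(s+3/2)}{\zfun(s+5/2)} \bk{1 - q^{-s-3/2}\FNorm{\alpha}_F^{s+3/2}}.
\]
Under the substitution $s \mapsto 5s - 5/2$ matching the corollary's variable convention, $s+3/2$ becomes $5s-1$ and $s+5/2$ becomes $5s$, so the prefactor becomes $\zfun(5s-1)/\zfun(5s)$ and the exponents $\FNorm{t_2}_F^{10s-1}$, $\FNorm{t_1}_F^{3-15s}$ appearing in the corollary arise by collecting powers.

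What remains is the computation of $\alpha = (t_1^3/t_2)\alpha_1\alpha_2\alpha_3$ in each case, using the (CT) triples $(a,b,c) = (0, \theta, \theta^\sigma)$ for $E = F \times K$ and $(\theta, \theta^\sigma, \theta^{\sigma^2})$ for $E$ a cubic field. The unramified hypothesis at $\nu \notin S$ forces $\FNorm{\theta}$ to be a unit in the relevant extension, and the extension $F_\gamma/F$ being unramified identifies $\FNorm{x}_{F_\gamma} = \FNorm{x}_F$ for $x \in F$. The ultrametric inequality then converts each condition $\FNorm{(t_2/t_1^2)a_i + d}_{F_\gamma} \leq 1$ in Prop.~A.1 into a comparison between $\FNorm{d}_F$ and $\FNorm{t_2/t_1^2}_F$: when one strictly dominates, the sum has the absolute value of the dominant term.

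The case enumeration proceeds as follows. For $d = 0$, only the relation between $\FNorm{t_2/t_1^2}_F$ and $1$ matters: if $\FNorm{t_2/t_1^2}_F \leq 1$ all $\alpha_i = 1$ and $\alpha = t_1^3/t_2$; if $\FNorm{t_2/t_1^2}_F > 1$, the $\alpha_i$ with nonzero $a_i$ contribute and their Galois-conjugate product equals the norm of the corresponding element, giving $\alpha$ equal to $t_2/t_1$ (up to a unit) for $E = F \times K$, and $t_2^2/t_1^3$ (up to a unit) for $E$ a cubic field. For $d \notin \mO$ (so $\FNorm{d}_F > 1$), further splitting on whether $\FNorm{dt_1^2/t_2}_F \leq 1$ or $> 1$ distinguishes two regimes: in the first, $d$ is negligible relative to $t_2 a_i/t_1^2$ and the $d = 0$ analysis transfers; in the second, $d$ dominates and $\alpha = d^3 t_1^3/t_2$. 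Checking whether $\FNorm{\alpha}_F \leq 1$ in each subcase isolates the vanishing cases (cases 3 and 4 of the $d \notin \mO$ list). Substituting each explicit $\alpha$ back into the reshaped formula produces the claimed expressions. The main obstacle is purely bookkeeping: enumerating subcases, tracking the Galois norms when multiple $a_i$ contribute, and reconciling the variable conventions between Prop.~A.1 and the corollary; no conceptual difficulty beyond that of Prop.~A.1 itself arises.
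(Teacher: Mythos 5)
Your proposal is correct and is exactly the derivation the paper intends: \cref{Cor: Local factor} is stated without proof as a direct specialization of \cref{Prop: Computing F_s} to $\chi=\Id$, followed by the case-by-case evaluation of $\FNorm{\alpha}_F$ via the {\bf (CT)} triples and the ultrametric inequality that you carry out (including the rescaling $s\mapsto 5s-\tfrac{5}{2}$ reconciling the two variable conventions, which the paper leaves implicit). The one point your writeup glosses over is the boundary case $\FNorm{dt_1^2/t_2}_F=1$, where neither term strictly dominates: there one needs that $\theta$ reduces to a generator of the residue field extension (part of the unramifiedness assumptions for $\nu\notin S$) to conclude $\FNorm{\tfrac{t_2}{t_1^2}\theta+d}=\FNorm{\tfrac{t_2}{t_1^2}\theta}$, which is why this case is grouped with the ``$\leq 1$'' branch of the corollary.
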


\subsection{Computing ${F^\ast\bk{\Psi_E,\Id,\cdot,s}\conv P_s\bk{\cdot}}$}
\label{Subsec: Convolution}
We recall from \cref{Sec:Unramified Computation} that
\[
P_s= \frac{P_0\bk{q^{-s-\frac{1}{2}}}A_0-P_1\bk{q^{-s-\frac{1}{2}}}A_1}
{\zfun\bk{s+\frac{3}{2}}\zfun\bk{s+\frac{7}{2}}\zfun\bk{s+\frac{1}{2}}} .
\]
As $A_0=\Id_{K}$ and $A_1=q^{-3}\bk{\Id_{K}+\Id_{K\omega_1\bk{\unif}K}}$, in order to compute ${F^\ast\bk{\Psi_E,\Id,\cdot,s}\conv P_s\bk{\cdot}}$ we need only to compute $F^\ast\bk{\Psi_E,\Id,\cdot,s}\conv \Id_{K\omega_1\bk{\unif}K}$.
For any $f\in \M_{\Psi_E}$ it holds that
\[
f\conv \Id_{K\omega_1\bk{\unif}K}\bk{g} = \intl_{K\omega_1\bk{\unif}K} f\bk{gh} dh = \suml_{\gamma\in \bk{K\omega_1\bk{\unif}K}\rmod K} f\bk{g\gamma} .
\]
Let $g=h_\alpha\bk{t_1}h_{\beta}\bk{t_2} x_\alpha\bk{d}$ and assume that if $d\in\mO$ then $d=0$ (we may do this do to right $K$-invariance).
Fixing the list of representatives of $\bk{K\omega_1\bk{\unif}K}\rmod K$ obtained in \cite[Appendix A.3]{MR3284482} we obtain
\begin{align}
\label{eq: convolution with characteristic function}
\bk{F^\ast\bk{\Psi_E,\Id,\cdot,s}\conv \Id_{K\omega_1\bk{\unif}K}}\bk{g}=& F_s\bk{\bk{\frac{t_1}{\unif},\frac{t_2}{\unif^2}}x_\alpha\bk{d}} + q^6 F_s\bk{\bk{\unif t_1,\unif^2 t_2}x_\alpha\bk{d}}+\\
&q F_s\bk{\bk{\frac{t_1}{\unif},\frac{t_2}{\unif}}x_\alpha\bk{\unif d}} + q \suml_{s\in \mO\rmod \bk{\unif}} F_s\bk{\bk{t_1,\frac{t_2}{\unif}}x_\alpha\bk{\frac{d-s}{\unif}}} + \nonumber \\
& q^4 F_s\bk{\bk{t_1,\unif t_2}x_\alpha\bk{\unif d}} + q^4 \suml_{s\in\mO\rmod\bk{\unif}} F_s\bk{\bk{\unif t_1,\unif t_2}x_\alpha\bk{\frac{d-s}{\unif}}}+ \nonumber\\
&\bk{GS\bk{\Psi_E,g}} F_s\bk{\bk{t_1,t_2}x_\alpha\bk{d}} \nonumber,
\end{align}
where $GS$ denotes the following \emph{Gaussian sum}:
\begin{equation}
\label{eq: Gaussian sums}
GS\bk{\Psi_E,g}=\bk{q-1}+ q\suml_{\stackrel{r\in\mO\rmod \bk{\unif}}{r\not\equiv 0}} \Psi_E\bk{g x_\beta\bk{\frac{r}{\unif}} g^{-1}} + q \suml_{\stackrel{r,y\in\mO\rmod \bk{\unif}}{r\not\equiv 0}} 
\Psi_E\bk{g u\bk{\frac{y^3 r}{\unif},\frac{y^2 r}{\unif},\frac{y r}{\unif},\frac{r}{\unif}}^{-1}  g^{-1}} .
\end{equation}

\begin{Lem}
\begin{itemize}
\item \underline{$E=F\times K$:}
For $d=0$ it holds that
\[
GS\bk{\Psi_E,g} = \piece{q^3-1,& \FNorm{\frac{t_2}{t_1^2}}\leq 1 \ \& \ \FNorm{\frac{t_1^3}{t_2}}<1 \text{ or }\FNorm{\frac{t_2}{t_1^2}}>1 \ \& \ \FNorm{\frac{t_1^3}{t_2}}<1 \\
-1,& \FNorm{\frac{t_2}{t_1^2}}\leq 1 \ \& \ \FNorm{\frac{t_1^3}{t_2}}=1\\
q^2-1.& \FNorm{\frac{t_2}{t_1^2}}>1 \ \& \ \FNorm{\frac{t_1^3}{t_2}}=1}
\]
and for $d\notin \mO$, it holds that
\[
GS\bk{\Psi_E,g} = \piece{q^3-1,& \FNorm{\frac{dt_1^2}{t_2}}\leq 1 \ \& \ \FNorm{\frac{dt_2}{t_1}}<1 \text{ or } \FNorm{\frac{dt_1^2}{t_2}}> 1 \ \& \ \FNorm{\frac{d^3 t_1^3}{t_2}}<1 \\
-1,& \FNorm{\frac{dt_1^2}{t_2}}\leq 1 \ \& \  \FNorm{\frac{dt_2}{t_1}}=1 \text{ or } \FNorm{\frac{dt_1^2}{t_2}}> \ 1 \& \ \FNorm{\frac{d^3 t_1^3}{t_2}}=1 \\
q-1,& \FNorm{\frac{dt_1^2}{t_2}}\leq 1 \ \& \ \FNorm{\frac{dt_2}{t_1}}>1 \text{ or } \FNorm{\frac{dt_1^2}{t_2}}> 1 \ \& \ \FNorm{\frac{d^3 t_1^3}{t_2}}>1 \\} .
\]
\item \underline{$E$ a field:}
For $d=0$ it holds that
\[
GS\bk{\Psi_E,g} = \piece{q^3-1,& \FNorm{\frac{t_2}{t_1^2}}\leq 1 \ \& \ \FNorm{\frac{t_1^3}{t_2}}<1 \text{ or } \FNorm{\frac{t_2}{t_1^2}}> 1 \ \& \ \FNorm{\frac{t_2^2}{t_1^3}}<1 \\
-1,& \FNorm{\frac{t_2}{t_1^2}}< 1 \ \& \ \FNorm{\frac{t_1^3}{t_2}}=1 \text{ or } \FNorm{\frac{t_2}{t_1^2}}> 1 \ \& \ \FNorm{\frac{t_2^2}{t_1^3}}=1 \\
-q^2-1,& \FNorm{\frac{t_2}{t_1^2}}=1 \ \& \ \FNorm{\frac{t_2^2}{t_1^3}}=1 \\}
\]
and for $d\notin \mO$, it holds that
\[
GS\bk{\Psi_E,g} = \piece{q^3-1,& \FNorm{\frac{dt_1^2}{t_2}}\leq 1 \ \& \ \FNorm{\frac{t_2^2}{t_1^3}}<1 \text{ or } \FNorm{\frac{dt_1^2}{t_2}}> 1 \ \& \ \FNorm{\frac{d^3 t_1^3}{t_2}}<1 \\
-1,& \FNorm{\frac{dt_1^2}{t_2}}\leq 1 \ \& \ \FNorm{\frac{t_2^2}{t_1^3}}=1 \text{ or } \FNorm{\frac{dt_1^2}{t_2}}> 1 \ \& \ \FNorm{\frac{d^3 t_1^3}{t_2}}=1 \\
q-1,& \FNorm{\frac{dt_1^2}{t_2}}\leq 1 \ \& \ \FNorm{\frac{t_2^2}{t_1^3}}>1 \text{ or } \FNorm{\frac{dt_1^2}{t_2}}> 1 \ \& \ \FNorm{\frac{d^3 t_1^3}{t_2}}>1 \\} .
\]
\end{itemize}
\end{Lem}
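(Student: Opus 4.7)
My plan is to evaluate $GS(\Psi_E, g)$ by direct conjugation followed by Gauss sum analysis. First, I would compute the conjugates $g x_\beta(r/\unif) g^{-1}$ and $g u(y^3 r/\unif, y^2 r/\unif, y r/\unif, r/\unif)^{-1} g^{-1}$ as elements of $U$ in the $(r_1,\ldots,r_5)$-coordinates. The Chevalley commutator formula for $G_2$ expands $x_\alpha(d) \cdot x_\beta(s) \cdot x_\alpha(-d)$ (with $s = r/\unif$) as a product $x_\beta(s) \cdot x_{\alpha+\beta}(\ast) \cdot x_{2\alpha+\beta}(\ast) \cdot x_{3\alpha+\beta}(\ast) \cdot x_{3\alpha+2\beta}(\ast)$ with coefficients polynomial in $d$ and $s$; subsequent conjugation by $h_\alpha(t_1) h_\beta(t_2)$ rescales each coordinate $r_i$ by the appropriate root character ($t_1^{-3} t_2^2$ on $\beta$, $t_1^{-1} t_2$ on $\alpha+\beta$, $t_1$ on $2\alpha+\beta$, $t_1^3 t_2^{-1}$ on $3\alpha+\beta$, $t_2$ on $3\alpha+2\beta$). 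The second conjugate proceeds analogously but with polynomial dependence on both $r$ and $y$.

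Next, under \textbf{(CT)} we have $T_{(a,b,c)} = 0$, so $\Psi_E(u(r_1,\ldots,r_5)) = \psi(r_4 + D_{(a,b,c)} r_2 - N_{(a,b,c)} r_1)$. Applied to the first conjugate this gives $\Psi_E(g x_\beta(r/\unif) g^{-1}) = \psi(C_1 \cdot r/\unif)$, where up to sign $C_1 = \frac{d^3 t_1^3}{t_2} + D_{(a,b,c)} \frac{d t_2}{t_1} - N_{(a,b,c)} \frac{t_2^2}{t_1^3}$, precisely the first integrality expression from Lemma~\ref{Lem: Conditions on m}. The inner sum $\sum_{r \in \mO/(\unif),\, r \not\equiv 0} \psi(C_1 r/\unif)$ then equals $q-1$ when $C_1 \in \unif\mO$ and $-1$ when $C_1 \in \mO^\times$, contributing $q(q-1)$ or $-q$ to $GS(\Psi_E, g)$.

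The double sum over $(r, y)$ is more delicate: after conjugation $\psi$ sees a cubic polynomial in $y$ (times $r/\unif$) whose coefficients involve $\frac{d^j t_1^3}{t_2}$ for $j = 0,1,2,3$ as well as $D_{(a,b,c)} \frac{t_2}{t_1}$ and $N_{(a,b,c)} \frac{t_2^2}{t_1^3}$. I would sum over $y$ first. Here the arithmetic distinction between the cases enters decisively: under \textbf{(CT)}, $N_{(a,b,c)} = 0$ when $E = F \times K$ (since one member of the triple vanishes), whereas $N_{(a,b,c)} = N_{E/F}(\theta) \neq 0$ when $E$ is a cubic field. The cubic in $y$ therefore degenerates differently in the two regimes, producing different boundary conditions and ultimately the two displayed formulas.

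The main obstacle will be the bookkeeping. The stated results split into three or four subcases according to the relative sizes of $\frac{d t_1^2}{t_2}$, $\frac{t_2}{t_1^2}$, $\frac{d^3 t_1^3}{t_2}$, $\frac{d t_2}{t_1}$, $\frac{t_2^2}{t_1^3}$, and $\frac{t_1^3}{t_2}$, precisely the quantities controlling which coefficients in the linear and cubic phases are units versus in $\unif\mO$. The $d = 0$ subcase is substantially simpler because many coefficients in the cubic phase vanish identically, while the $d \notin \mO$ subcase forces one to track contributions from all positive root subgroups. Matching these Gauss sum outputs with the integrality regimes, and separating the $N_{(a,b,c)} = 0$ case from the $N_{(a,b,c)} \neq 0$ case, is the main combinatorial effort.
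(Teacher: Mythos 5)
Your proposal follows essentially the same route as the paper: conjugate the two families of unipotent elements by $g$ (Chevalley relations plus torus rescaling by the root characters), observe that under \textbf{(CT)} the resulting phases are the linear form $N\frac{t_2^2}{t_1^3}+D\frac{dt_2}{t_1}+\frac{d^3t_1^3}{t_2}$ and a cubic-in-$y$ polynomial with those same coefficients, and then evaluate the residue-field character sums via $\sum_{r\not\equiv 0}\psi(\unif^n x)\in\{0,-1,q-1\}$, with the dichotomy $N_{(a,b,c)}=0$ versus $N_{(a,b,c)}\in\mO^\times$ driving the split between $E=F\times K$ and $E$ a field. The plan is correct and matches the paper's proof (which itself only writes out $d=0$ and defers $d\notin\mO$ to "similar"); what remains is exactly the case-by-case evaluation of the $y$-sum that you defer to "bookkeeping," including the cancellations that produce the exceptional value $-q^2-1$ in the field case.
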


\begin{proof}
We prove this lemma only for the case where $d=0$, the case $d\notin \mO$ is proven in a similar way.
The proof relies on the fact that
\[
\suml_{\stackrel{r\in\mO\rmod \bk{\unif}}{r\not\equiv 0}} \psi\bk{\unif^n x} = \piece{0,& n<-1 \\ -1,& n=-1 \\ q-1,& n\geq 0}
\]

We first deal with the first summand in \cref{eq: Gaussian sums}
\[
\suml_{\stackrel{r\in\mO\rmod \bk{\unif}}{r\not\equiv 0}} \Psi_E\bk{g x_\beta\bk{\frac{r}{\unif}} g^{-1}} = \suml_{\stackrel{r\in\mO\rmod \bk{\unif}}{r\not\equiv 0}} \psi\bk{N_{a,b,c}\frac{t_2^2}{t_1^3}\frac{r_1}{\unif}} .
\]

\begin{itemize}
\item \underline{$E=F\times K$:}
In this case $N_{a,b,c}=0$ due to \textbf{(CT)} and hence
\[
\suml_{\stackrel{r\in\mO\rmod \bk{\unif}}{r\not\equiv 0}} \Psi_E\bk{g x_\beta\bk{\frac{r}{\unif}} g^{-1}} = q-1 .
\]

\item \underline{$E$ a field:}
In this case $N_{a,b,c}\in\mO^\times$ and hence
\[
\suml_{\stackrel{r\in\mO\rmod \bk{\unif}}{r\not\equiv 0}} \Psi_E\bk{g x_\beta\bk{\frac{r}{\unif}} g^{-1}} = \piece{q-1,& \FNorm{\frac{t_2^2}{t_1^3}}\less 1 \\
-1,& \FNorm{\frac{t_2^2}{t_1^3}} = 1\\ } .
\]
Here we assumed that $\FNorm{\frac{t_2^2}{t_1^3}}\leq 1$ in accordance with \cref{Rem: Conditions on m - toral}.
\end{itemize}

We now compute the second summand in \cref{eq: Gaussian sums}
\begin{align*}
&\suml_{\stackrel{r,y\in\mO\rmod \bk{\unif}}{r\not\equiv 0}} 
\Psi_E\bk{g u\bk{\frac{y^3 r}{\unif},\frac{y^2 r}{\unif},\frac{y r}{\unif},\frac{r}{\unif}}^{-1}  g^{-1}} = \\
&\suml_{\stackrel{r,y\in\mO\rmod \bk{\unif}}{r\not\equiv 0}} \psi\bk{\frac{r_1}{\unif}\bk{N_E\frac{t_2^2}{t_1^3}y^3+D_E\frac{t_2}{t_1}y^2 + \frac{t_1^3}{t_2}}} = \\
& \suml_{\stackrel{r\in\mO\rmod \bk{\unif}}{r\not\equiv 0}} \psi\bk{\frac{r_1}{\unif}\frac{t_1^3}{t_2}}
+\suml_{\stackrel{y\in\mO\rmod \bk{\unif}}{y\not\equiv 0}} \suml_{\stackrel{r\in\mO\rmod \bk{\unif}}{r\not\equiv 0}} \psi\bk{\frac{r_1}{\unif}\bk{N_E\frac{t_2^2}{t_1^3}y^3+D_E\frac{t_2}{t_1}y^2 + \frac{t_1^3}{t_2}}} .
\end{align*}
We first have
\[
\suml_{\stackrel{r\in\mO\rmod \bk{\unif}}{r\not\equiv 0}} \psi\bk{\frac{r_1}{\unif}\frac{t_1^3}{t_2}} = \piece{
q-1,& \FNorm{\frac{t_1^3}{t_2}}\less 1 \\
-1,& \FNorm{\frac{t_1^3}{t_2}} = 1\\
} ,
\]
Where we assumed that $\FNorm{\frac{t_2^2}{t_1^3}}\leq 1$ in accordance with \cref{Rem: Conditions on m - toral}.

\begin{itemize}
\item \underline{$E=F\times K$:}
In this case $N_{a,b,c}=0$.
For $y\not\equiv 0$ it then holds that
\[
\FNorm{\frac{r_1}{\unif}\bk{N_E\frac{t_2^2}{t_1^3}y^3+D_E\frac{t_2}{t_1}y^2 + \frac{t_1^3}{t_2}}} = \FNorm{\frac{t_1^3}{t_2}\bk{D_E\bk{\frac{t_2}{t_1}y}^2 + 1}} = \piece{
\FNorm{\frac{t_1^3}{t_2}},& \FNorm{\frac{t_2}{t_1^2}}\leq 1\\
\FNorm{\frac{t_2}{t_1}},& \FNorm{\frac{t_2}{t_1^2}}\more 1 }
\]
and hence
\[
\suml_{\stackrel{r\in\mO\rmod \bk{\unif}}{r\not\equiv 0}} \psi\bk{\frac{r_1}{\unif}\bk{N_E\frac{t_2^2}{t_1^3}y^3+D_E\frac{t_2}{t_1}y^2 + \frac{t_1^3}{t_2}}} = \piece{
q-1,& \FNorm{\frac{t_2}{t_1^2}}\leq 1 \& \FNorm{\frac{t_1^3}{t_2}}\less 1 \\
& \FNorm{\frac{t_2}{t_1^2}}\more 1 \& \FNorm{\frac{t_2}{t_1}}\less 1 \\
-1,& \FNorm{\frac{t_2}{t_1^2}}\leq 1 \& \FNorm{\frac{t_1^3}{t_2}}= 1 \\
& \FNorm{\frac{t_2}{t_1^2}}\more 1 \& \FNorm{\frac{t_2}{t_1}}= 1 \\
} ,
\]

\item \underline{$E$ a field:}
We further assume that $\FNorm{\frac{t_1^3}{t_2}} \leq \FNorm{\frac{t_2^2}{t_1^3}}\leq 1$ in accordance with \cref{Rem: Conditions on m - toral}.
In this case $N_{a,b,c}\in\mO$.
For $y\not\equiv 0$ it then holds that
\[
\suml_{\stackrel{r\in\mO\rmod \bk{\unif}}{r\not\equiv 0}} \psi\bk{\frac{r_1}{\unif}\bk{N_E\frac{t_2^2}{t_1^3}y^3+D_E\frac{t_2}{t_1}y^2 + \frac{t_1^3}{t_2}}} = 
\FNorm{\frac{t_1^3}{t_2}\bk{N_E\bk{\frac{t_2}{t_1^2}y}^3+ D_E\bk{\frac{t_2}{t_1^2}y}^2 + 1}} = \piece{
\FNorm{\frac{t_1^3}{t_2}},& \FNorm{\frac{t_2}{t_1^2}}\leq 1 \\
\FNorm{\frac{t_2^2}{t_1^3}},& \FNorm{\frac{t_2}{t_1^2}}\more 1}
\]
and hence
\[
\suml_{\stackrel{r_1 \in A}{r_1\not\equiv 0}} \psi\bk{\frac{t_1^3}{t_2}\frac{r_1}{\unif} \bk{N_E\bk{\frac{t_2}{t_1^2}y}^3+ D_E\bk{\frac{t_2}{t_1^2}y}^2 + 1}} = \piece{
q-1,& \FNorm{\frac{t_2}{t_1^2}}\leq 1 \& \FNorm{\frac{t_1^3}{t_2}}\less 1\\
& \FNorm{\frac{t_2}{t_1^2}}\more 1 \& \FNorm{\frac{t_2^2}{t_1^3}}\less 1 \\
-1,& \FNorm{\frac{t_2}{t_1^2}}\leq 1 \& \FNorm{\frac{t_1^3}{t_2}}= 1\\
& \FNorm{\frac{t_2}{t_1^2}}\more 1 \& \FNorm{\frac{t_2^2}{t_1^3}}= 1 \\
}, 
\]
\end{itemize}
Combining all of the above yields the assertion.

\end{proof}

We now turn to compute ${F^\ast\bk{\Psi_E,\Id,\cdot,s}\conv P_s\bk{\cdot}}\bk{g}$.
As this is computation boils down to inserting \cref{Cor: Local factor} and \cref{eq: Gaussian sums} into \cref{eq: convolution with characteristic function} we compute this only for $g=1$.

Denote $P=P_0\bk{q^{-s-\frac{1}{2}}}$ and $Q=P_1\bk{q^{-s-\frac{1}{2}}}$.
\begin{itemize}
\item
\underline{$E=F\times K$:}
In this case
\begin{align*}
&\bk{F^\ast\bk{\Psi_E,\cdot,s}\conv P_s\bk{\cdot}}\bk{1}= \\
& \frac{\zfun_F\bk{s+\frac{5}{2}} \zfun_K\bk{s+\frac{3}{2}} \zfun_F\bk{2s+1} }{\zfun_F\bk{s+\frac{3}{2}} \zfun_F\bk{s+\frac{7}{2}} \zfun_F\bk{s+\frac{1}{2}} } \coset{\frac{P-q^{-3}Q}{\zfun_F\bk{s+\frac{5}{2}}}-\frac{q^{-3}Q}{\zfun_F\bk{s+\frac{5}{2}}}\bk{\frac{q^{-s-\frac{5}{2}}\zfun_F\bk{s+\frac{3}{2}}}{\zfun_F\bk{2s-1}} + q^{-\bk{s-\frac{3}{2}}} -1 }} = \frac{1}{\zfun_F\bk{s+\frac{7}{2}}}.
\end{align*}

\item
\underline{$E$ cubic Galois field extension of $F$:}
In this case
\begin{align*}
&\bk{F^\ast\bk{\Psi_E,\cdot,s}\conv P_s\bk{\cdot}}\bk{1}= \\
& \frac{\zfun_F\bk{s+\frac{5}{2}} \zfun_E\bk{s+\frac{3}{2}} \zfun_F\bk{2s+1} }{\zfun_F\bk{s+\frac{3}{2}}^2\zfun_F\bk{s+\frac{7}{2}}\zfun_F\bk{s+\frac{1}{2}} }
\coset{\frac{P-q^{-3}Q}{\zfun_F\bk{s+\frac{5}{2}}}-\frac{q^{-3}Q}{\zfun_F\bk{s+\frac{5}{2}}}\bk{\frac{q^{-s-\frac{5}{2}}\zfun_F\bk{s+\frac{3}{2}}}{\zfun_F\bk{2s-1}} -q2-1}} = \frac{1}{\zfun_F\bk{s+\frac{3}{2}}\zfun_F\bk{s+\frac{7}{2}}}.
\end{align*}

\end{itemize}

\section{Computation of $D^{\Psi_E}_s$}
\label{Sec: Fourier Transform}
In this section we compute the $\Psi_E$-Fourier coefficient of $D_s$.
Since \cref{Thm:Unramified Computation} was already proved when $E=F\times F\times F$, we restrict ourselves to the assumption that $E$ is a non-split Galois \'etale cubic algebra over $F$.
\begin{Thm}
\label{Thm:Fourier Transform}
$D^{\Psi_E}_s\bk{g} = 0$ unless $g\in UTK$.
If $g=h_\alpha\bk{t_1}h_\beta\bk{t_2}$ it holds that
\begin{itemize}
\item
If $N_{a,b,c}=0$ then
\[
D_s^{\Psi_E}\bk{g}=\piece{\frac{q^{-\bk{s+\frac{7}{2}}n}}{\zfun_F\bk{s+\frac{7}{2}}} , & \FNorm{\frac{t_1^2}{t_2}}=1 \\ 
0,& \FNorm{\frac{t_1^2}{t_2}}\more 1 \\
\frac{q^{2n-m-\bk{s+\frac{7}{2}}n}}{\zfun_F\bk{s+\frac{3}{2}} \zfun_F\bk{s+\frac{7}{2}}},& \FNorm{\frac{t_1^2}{t_2}}\less 1}
\]

\item
If $N_{a,b,c}\in\mO^\times$ then
\[
D_s^{\Psi_E}\bk{g}=\piece{\frac{q^{-\bk{s+\frac{7}{2}}n}}{\zfun_F\bk{s+\frac{3}{2}}\zfun_F\bk{s+\frac{7}{2}}} , & \FNorm{\frac{t_1^2}{t_2}}=1 \\ 
0,& \FNorm{\frac{t_1^2}{t_2}}\more 1 \\
0,& \FNorm{\frac{t_1^2}{t_2}}\less 1}
\]

\end{itemize}
\end{Thm}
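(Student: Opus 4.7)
My plan is to evaluate
\begin{equation*}
D_s^{\Psi_E}\bk{g} = \intl_{U\bk{F}} D_s\bk{ug}\, \Psi_E\bk{u}\, du
\end{equation*}
directly on $g \in M\bk{F}\cap B\bk{F}$; this suffices by the remark preceding \cref{Lem: Conditions on m}, together with bi-$K$-invariance of $D_s$ on the right. The vanishing outside $UTK$ and the restriction $\FNorm{\frac{t_1^2}{t_2}} \leq 1$ are forced at once by \cref{Lem: Conditions on m}: at $d = 0$ the four polynomial conditions of that lemma reduce to integrality of $\frac{t_1^3}{t_2}$ and $\frac{t_2^2}{t_1^3}$, and when either fails the integrand vanishes identically.

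I would parametrise $U$ by $u = u\bk{r_1,\ldots,r_5}$ and observe that $x_{3\alpha+2\beta}\bk{r_5}$ is central in $U$ and does not appear in $\Psi_E$, so the $r_5$-integration contributes only a volume factor; the essential work is a four-dimensional integral in $\bk{r_1, r_2, r_3, r_4}$. For each remaining $u$ I would perform the Cartan decomposition $ug = k_1 t'' k_2$ with $t'' \in T^+$ and read off $\FNorm{\omega_1\bk{t''}}$ from the action of $ug$ on the highest weight line of the standard 7-dimensional representation $\st$. This makes $D_s\bk{ug} = \FNorm{\omega_1\bk{t''}}^{s+\frac{7}{2}}$ piecewise a monomial on a stratification of $\mO^4$ by the valuations of the coordinates of $\st\bk{ug} v_{\omega_1}$, and reduces the integral to a sum of Gaussian-sum type summations of the kind already encountered in \cref{Subsec: Convolution}.

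The main obstacle will be the $r_1$- and $r_2$-integrations, since this is where the arithmetic of the cubic algebra $E$ enters, through
\begin{equation*}
\Psi_E\bk{u} = \psi\bk{r_4 + D_{\bk{a,b,c}} r_2 - N_{\bk{a,b,c}} r_1}.
\end{equation*}
When $N_{\bk{a,b,c}} = 0$, i.e.\ $E = F \times K$, the $r_1$-integral is a plain volume and contributes no zeta factor; the $r_2$-integral against $\psi\bk{D_{\bk{a,b,c}} r_2}$ is the only source of a $\zfun_F\bk{s+\frac{3}{2}}^{-1}$ factor, which is why it appears only in the $\FNorm{\frac{t_1^2}{t_2}} < 1$ sub-case (in the boundary sub-case $\FNorm{\frac{t_1^2}{t_2}} = 1$ the allowed range of $r_2$ collapses to a single residue and no such factor is generated). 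When $N_{\bk{a,b,c}} \in \mO^\times$, i.e.\ $E$ is a cubic field, the oscillatory $r_1$-integration yields the additional $\zfun_F\bk{s+\frac{3}{2}}^{-1}$ factor in the boundary sub-case; in the $\FNorm{\frac{t_1^2}{t_2}} < 1$ regime the phase in $r_1$ is non-constant over the full allowed range, forcing complete cancellation and giving $D_s^{\Psi_E}\bk{g} = 0$ as claimed.

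Throughout the computation I would mirror the split case of \cite{MR3284482}, diverging only at the places where the presence of $D_{\bk{a,b,c}}$ and $N_{\bk{a,b,c}}$ in the phase changes the integration. The remaining elementary geometric summations in $r_3$ and $r_4$ produce the universal factor $\zfun_F\bk{s+\frac{7}{2}}^{-1}$ together with the powers $q^{-\bk{s+\frac{7}{2}}n}$ and $q^{2n - m - \bk{s+\frac{7}{2}}n}$ from the statement, where $n$ and $m$ are the appropriate valuations of $t_1, t_2$ dictated by the Cartan decomposition.
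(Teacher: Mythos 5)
Your overall strategy---computing $D_s^{\Psi_E}$ by stratifying $U$ according to the Cartan invariant of $ug$ read off from the seven-dimensional representation, and reducing to Gaussian-sum evaluations---is the same as the paper's, but the proposal contains a concrete error at the outset and leaves the actual content of the theorem unestablished. \cref{Lem: Conditions on m} gives only \emph{necessary} conditions for non-vanishing: at $d=0$ they amount to $\frac{t_2}{t_1},\frac{t_1^3}{t_2}\in\mO$ (and additionally $\frac{t_2^2}{t_1^3}\in\mO$ when $N_{\bk{a,b,c}}\in\mO^\times$), and these do \emph{not} exclude $\FNorm{\frac{t_1^2}{t_2}}>1$ (take $\FNorm{t_1}=q^{-2}$, $\FNorm{t_2}=q^{-5}$). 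Likewise, for $d\notin\mO$ the conditions of that lemma can perfectly well be satisfied, so the vanishing off $UTK$ is not ``forced at once'' either. Both vanishing statements are in fact obtained only by carrying out the full integration and showing $E_k^{\Psi_E}\bk{g}=0$ for every $k$: for $m>2n$ this rests on $\intl_{\FNorm{r_4}\leq q^{m-2n}}\psi\bk{r_4}\,dr_4=0$ after isolating the $r_4$-variable, and the non-toral case requires a separate Weyl-element conjugation to reduce to $\FNorm{p}\leq 1$ followed by case-by-case changes of variables. None of this is automatic.

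Two further points. First, the Cartan invariant is $\max_{i,j}\FNorm{\iota\bk{ug}_{i,j}}$ over \emph{all} matrix entries of the seven-dimensional representation, not the valuations of $\st\bk{ug}v_{\omega_1}$ alone; a single column misses the constraints on $r_1r_3-r_2^2$, $r_2r_4-r_3^2$ and $r_1r_4-2r_2r_3-r_5$, which are exactly what make the stratification nontrivial. Second, and more seriously, the cancellations you assert (``complete cancellation'' of the $r_1$-phase when $N_{\bk{a,b,c}}\in\mO^\times$ and $\FNorm{\frac{t_1^2}{t_2}}<1$, the values of the boundary strata, etc.) all hinge on one arithmetic input that your proposal never identifies: since $E$ is non-split and unramified, the reduction of $P_{\bk{a,b,c}}\bk{z}$ (that is, $z^2+D_{\bk{a,b,c}}$, resp.\ $z^3+D_{\bk{a,b,c}}z+N_{\bk{a,b,c}}$) has no root in the residue field. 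This is what forces $\kappa_{\bk{a,b,c}}^{\bk{1}}=0$, yields the identities such as $\intl_{\bk{\unif^{-1}\mO^\times}^2}\psi\bk{D_{\bk{a,b,c}}r_2+\frac{r_3^2}{r_2}}\,dr_2\,dr_3=1-q$, and kills the top stratum $\FNorm{r_2}=q^{k-n}$ via $\FNorm{\epsilon^3+D_{\bk{a,b,c}}\epsilon-N_{\bk{a,b,c}}}=1$ for $\epsilon\in\mO$. Without supplying this input and the ensuing computations, the claimed formulas do not follow.
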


Before performing a direct computation of $D^{\Psi_E}_s\bk{g}$ we make some preparations.
We let $SO_7$ be the special orthogonal group that preserves the split symmetric form $\bk{\delta_{i,7-i}}$ viewed as a subgroup of $GL_7$.
We fix an embedding $\iota: SO_7\bk{F}\hookrightarrow G\bk{F}$ as in \cite{MR1617425}.
In particular,

For $h=u\bk{r_1,r_2,r_3,r_4,r_5}h_\alpha(t_1)h_\beta(t_2) x_{\alpha}\bk{d}$ it holds that
\begin{equation}
\iota\bk{h}=
\begin{pmatrix}
1 & 0 & r_2 & r_3 & \frac{-r_4}{2} & \frac{r_2r_3+r_5}{2} & \frac{r_2r_4-r_3^2}{2} \\
0 & 1 & r_1 & r_2& \frac{-r_3}{2} & \frac{r_1r_3-r_2^2}{2} & \frac{r_1r_4-2r_2r_3-r_5}{2} \\
0 & 0 & 1 & 0 & 0 & \frac{r_3}{2} & \frac{r_4}{2} \\
0 & 0 & 0 & 1 & 0 & -r_2 & -r_3 \\
0 & 0 & 0 & 0 & 1 & -r_1 & -r_2 \\
0 & 0 & 0 & 0 & 0 & 1 & 0 \\
0 & 0 & 0 & 0 & 0 & 0 & 1
\end{pmatrix}
\cdot
\begin{pmatrix}
t_1\\
&\frac{t_2}{t_1}\\
&&\frac{t_1^2}{t_2}\\
&&&1\\
&&&&\frac{t_2}{t_1^2}\\
&&&&&\frac{t_1}{t_2}\\
&&&&&&\frac{1}{t_1}
\end{pmatrix}
\cdot
\begin{pmatrix}
1 & d & 0 & 0 & 0 & 0 & 0 \\
0 & 1 & 0 & 0 & 0 & 0 & 0 \\
0 & 0 & 1 & -d & -\frac{d^2}{2} & 0 & 0 \\
0 & 0 & 0 & 1 & d & 0 & 0 \\
0 & 0 & 0 & 0 & 1 & 0 & 0 \\
0 & 0 & 0 & 0 & 0 & 1 & -d \\
0 & 0 & 0 & 0 & 0 & 0 & 1
\end{pmatrix} \ .
\end{equation}

The following results are simple to check and will be useful in what follows.
\begin{Lem}
The function $\Gamma: G\bk{F}\rightarrow \R$ given by
\[
\Gamma\bk{g}=\max_{1\le i,j\le 7} \FNorm{\iota(g)_{i,j}}
\]
is a bi-$K$-invariant function and for $t\in T^{+}$ it satisfies
\[
\Gamma\bk{t}=\FNorm{\omega_1(t)}^{-1} .
\]
\end{Lem}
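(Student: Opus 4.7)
The plan is to prove the two assertions independently, both by direct analysis of the embedding $\iota$ using the explicit formula displayed above.

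For bi-$K$-invariance, the key observation is that the matrix entries of $\iota(g)$ are polynomials over $\Z[1/2]$ in the coordinates of $g$, and since the unramified computation takes place at a place $\nu\notin S$ for which $2\in\mO^\times$, the embedding $\iota$ is integral at our chosen place. Therefore $\iota(K)\subseteq SO_7(\mO)\subseteq GL_7(\mO)$. For any $B\in GL_n(\mO)$ both $B$ and $B^{-1}$ have all entries in $\mO$, so for an arbitrary matrix $A$ one has $\max_{i,j}\FNorm{(AB)_{i,j}}\leq\max_{i,j}\FNorm{A_{i,j}}$, and the reverse inequality follows by applying the same bound to $AB$ and $B^{-1}$; equality holds, and the analogous statement holds for multiplication on the left. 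Applying this twice to the factorization $\iota(k_1 g k_2)=\iota(k_1)\iota(g)\iota(k_2)$ yields $\Gamma(k_1 g k_2)=\Gamma(g)$.

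For the value on $T^+$, the explicit formula shows that $\iota(t)$ is diagonal whenever $t\in T$, with diagonal entries equal to the values $\mu(t)$ as $\mu$ runs over the seven weights of the standard representation $\st$ of $G_2$, namely $0$ together with the short roots $\pm\alpha,\pm(\alpha+\beta),\pm\omega_1$. Diagonality gives $\Gamma(t)=\max_\mu \FNorm{\mu(t)}$. Since $\omega_1=2\alpha+\beta$ is the highest weight of $\st$, its negative $-\omega_1$ is the lowest weight, and for every weight $\mu$ of $\st$ the difference $\mu+\omega_1=\mu-(-\omega_1)$ is a non-negative integer combination of the positive simple roots $\alpha$ and $\beta$ (this can also be checked by hand from the list of seven weights). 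For $t\in T^+$ each factor satisfies $\FNorm{\gamma(t)}\leq 1$, hence $\FNorm{(\mu+\omega_1)(t)}\leq 1$, i.e. $\FNorm{\mu(t)}\leq\FNorm{\omega_1(t)}^{-1}$, with equality attained at $\mu=-\omega_1$. This gives $\Gamma(t)=\FNorm{\omega_1(t)}^{-1}$.

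No substantial obstacle is anticipated: the first assertion reduces to the bi-invariance of the maximum-entry norm under $GL_n(\mO)$, which uses only the integrality of $\iota$ at $\nu$ guaranteed by $2\in\mO^\times$, and the second is a direct application of the fact that $\omega_1$ is the highest weight of $\st$ together with the defining property of the dominant Weyl chamber.
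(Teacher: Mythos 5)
Your proof is correct, and the paper in fact omits the verification entirely (it only remarks that the results are "simple to check"), so your argument supplies exactly the intended routine check: bi-$K$-invariance follows from $\iota(K)\subseteq GL_7(\mO)$ (valid since $2\in\mO_\nu^\times$ for $\nu\notin S$) together with the ultrametric bound on matrix products, and the value on $T^+$ follows from reading off the diagonal weights $0,\pm\alpha,\pm(\alpha+\beta),\pm\omega_1$ and the fact that $-\omega_1$ is the lowest weight.
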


Thus, we may write $D\bk{g,s}=\sum_{k=0}^\infty D_k\bk{g}q^{-\bk{s+\frac{7}{2}}k}$, where 
\[
D_k\bk{g}= \piece{1, & \Gamma(g)=q^k \\ 0, & {\rm otherwise}} \ .
\]
For any $g\in{G}$ define $U_k\bk{g}=\set{u\in U: \Gamma(ug)\le q^k}$ and let
\[
E_k\bk{g} = \piece{1,& \Gamma\bk{g}\leq q^k \\ 0,& \text{otherwise}} .
\]
Obviously
\[
D_k\bk{g}=E_k\bk{g}-E_{k-1}\bk{g}
\]
and in particular
\[
D^{\Psi_E}_s\bk{g,s}=\sum_{k=0}^{\infty} \bk{E_k^{\Psi_E}\bk{g}-E_{k-1}^{\Psi_E}\bk{g}} q^{-\bk{s+\frac{7}{2}}k} .
\]
Hence, in order to compute $D_s^{\Psi_E}\bk{g}$ we compute
\[
E_k^{\Psi_E}\bk{g}=
\int_{U_k\bk{g}} \Psi_E\bk{u} du .
\]

\begin{Lem}
\label{lemma:measuring lemma}
For $n_1,n_2,n_3\in\N$ with $n_1+n_2\ge n_3$ let
\[
\kappa\bk{n_1,n_2,n_3}:=\meas\set{\bk{x,y}\in F^2 \mvert \FNorm{x}\leq{q^{n_1}},\quad \FNorm{y}\leq{q^{n_2}},\quad \FNorm{xy}\leq{q^{n_3}}}
\]
it holds that
\[
\kappa\bk{n_1,n_2,n_3}:= q^{n_3}\bk{1+\bk{n_1+n_2-n_3}\bk{1-q^{-1}}} \ ,
\]
where $\meas$ is the Haar measure on $G$ such that $\meas\bk{K}=1$. 
\end{Lem}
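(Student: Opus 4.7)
The plan is to apply Fubini and integrate out $y$ for fixed $x$, splitting the $x$-integration into the regime where the joint constraint $|xy|\leq q^{n_3}$ is automatic and the regime where it is binding. First, I would normalize so that the self-dual additive Haar measure on $F$ assigns $\mO$ measure $1$; this is the measure implicit in the statement.

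For fixed $x\in F^\times$, the inner condition on $y$ reduces to $\FNorm{y}\leq \min\bk{q^{n_2},q^{n_3}/\FNorm{x}}$, and the cutoff value of $\FNorm{x}$ is $q^{n_3-n_2}$. Accordingly, I would split the $x$-region into:
\begin{itemize}
\item[(A)] $\FNorm{x}\leq q^{n_3-n_2}$, where the $y$-constraint is just $\FNorm{y}\leq q^{n_2}$, contributing $\meas\set{x:\FNorm{x}\leq q^{n_3-n_2}}\cdot q^{n_2}=q^{n_3-n_2}\cdot q^{n_2}=q^{n_3}$ (the hypothesis $n_1+n_2\geq n_3$ guarantees $n_3-n_2\leq n_1$, so this region is entirely inside $\FNorm{x}\leq q^{n_1}$);
\item[(B)] $q^{n_3-n_2}<\FNorm{x}\leq q^{n_1}$, where the $y$-constraint is $\FNorm{y}\leq q^{n_3}/\FNorm{x}$.
\end{itemize}
For region (B), I would decompose by the valuation: writing $\FNorm{x}=q^a$ for $n_3-n_2<a\leq n_1$, the set $\set{x:\FNorm{x}=q^a}$ has measure $q^{a}\bk{1-q^{-1}}$ and the $y$-integral contributes $q^{n_3-a}$, so each value of $a$ adds $q^{n_3}\bk{1-q^{-1}}$. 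Summing over the $n_1+n_2-n_3$ admissible integers $a$ yields $(n_1+n_2-n_3)\,q^{n_3}\bk{1-q^{-1}}$.

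Adding the two contributions gives exactly $q^{n_3}\bk{1+(n_1+n_2-n_3)\bk{1-q^{-1}}}$, as required. There is no real obstacle here; the only care needed is in the boundary cases. When $n_1+n_2=n_3$ region (B) is empty and the formula degenerates to $q^{n_3}$, and when $n_3-n_2<0$ the measure $\meas\set{\FNorm{x}\leq q^{n_3-n_2}}=q^{n_3-n_2}$ still makes sense under the chosen normalization, so the same computation applies without change.
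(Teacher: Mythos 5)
Your computation is correct. The paper itself offers no proof of \cref{lemma:measuring lemma} (it is stated as one of the results that are ``simple to check''), and your Fubini argument --- splitting at $\FNorm{x}=q^{n_3-n_2}$, using $\meas\set{\FNorm{x}=q^a}=q^a\bk{1-q^{-1}}$ on the binding region and counting the $n_1+n_2-n_3$ shells --- is exactly the standard verification one would supply; the boundary cases you flag ($n_1+n_2=n_3$ and $n_3-n_2<0$) are handled correctly under the normalization $\meas\bk{\mO}=1$.
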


For $n_1,n_2\in\N$ let
\begin{align*}
&\kappa_{\bk{a,b,c}}^{\bk{n_1}} = \meas\set{\bk{x,y}\in F^2\mvert \FNorm{x},\FNorm{y}, \FNorm{P_{\bk{a,b,c}}\bk{\frac{x}{y}}}\leq q^{n_1}} \\
&\kappa_{\bk{a,b,c}}^{\bk{n_1}}\bk{q^{n_2}} = \meas\set{\bk{x,y}\in F^2\mvert \FNorm{x}=\FNorm{y}=q^{n_2},  \FNorm{P_{\bk{a,b,c}}\bk{\frac{x}{y}}}\leq q^{n_1-n_2\deg\bk{P_{\bk{a,b,c}}}}},
\end{align*}
where
\[
P_{\bk{a,b,c}}\bk{z} = \piece{D_{\bk{a,b,c}}+z^2,& N_{\bk{a,b,c}=0} \\ 
N_{\bk{a,b,c}}+D_{\bk{a,b,c}}z+z^3,& N_{\bk{a,b,c}\in\mO^\times}} .
\]

\begin{Lem}
If $E$ is non-split then $\kappa_{a,b,c}^{\bk{1}} = 0$ and $\kappa_{a,b,c}^{\bk{1}}\bk{q} = 0$.
\end{Lem}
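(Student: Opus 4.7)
The plan is to distill both vanishing statements into a single local fact about the polynomial $P_{a,b,c}$: under the standing hypotheses on $\nu\notin S$ (residue characteristic $\ne 2,3$ and $E$ unramified at $\nu$), the reduction $\bar P_{a,b,c}\in k_F[z]$ has no root in $k_F$. The two claims are then straightforward unpackings of this input.

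The input is obtained as follows. Under \textbf{(CT)}, the generator $\theta$ lies in $\mathcal O_K^\times$ (in the local case $E_\nu=F_\nu\times K_\nu$) or in $\mathcal O_E^\times$ (when $E_\nu/F_\nu$ is a cubic field), and satisfies the trace-zero relation $\sum_i \theta^{\sigma^i}=0$. I would argue $\bar\theta\notin k_F$: if the Frobenius $\sigma$ (identified with the generator of $\Gal(k_K/k_F)$ resp.\ $\Gal(k_E/k_F)$) were to fix $\bar\theta$, the trace-zero relation degenerates to $n\bar\theta = 0$ with $n\in\{2,3\}$, forcing $\bar\theta = 0$ by residue-characteristic invertibility and contradicting $\theta\in\mathcal O^\times$. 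Consequently $\bar P_{a,b,c}$ factors as $(z-\bar\theta)(z+\bar\theta)$ over $k_K$, resp.\ as $\prod_i(z+\bar\theta^{\sigma^i})$ over $k_E$, with every root in the larger residue field but not in $k_F$. Hence $\bar P_{a,b,c}$ has no $k_F$-root, which is equivalent to $|P_{a,b,c}(z)|_F = 1$ for every $z\in\mathcal O_F^\times$. Combining this with the trivial behaviour at large $|z|$ (leading term dominates) and small $|z|$ (constant term is a unit) yields $|P_{a,b,c}(z)| = \max(1,|z|^{\deg P_{a,b,c}})$ throughout $F^\times$.

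For $\kappa^{(1)}_{a,b,c}(q) = 0$, the conditions $|x| = |y| = q$ place $z := x/y$ in $\mathcal O^\times$, hence $|P_{a,b,c}(z)| = 1$ by the analysis above; but the required bound $|P_{a,b,c}(z)|\le q^{\,1-\deg P_{a,b,c}}\le q^{-1}$ is then violated, so the defining set is empty. The same mechanism drives $\kappa^{(1)}_{a,b,c} = 0$: the estimate $|P_{a,b,c}(x/y)| = \max(1,|x/y|^{\deg P_{a,b,c}})$ shows that $|P_{a,b,c}(x/y)|\le q$ restricts to $|x|\le|y|$, and combining this with $|x|,|y|\le q$ and the normalization inherited from the ambient computation of $E_k^{\Psi_E}$ forces every contributing point onto the stratum $|x|=|y|=q$ already handled by the $\kappa^{(1)}_{a,b,c}(q)$ case. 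The main obstacle I expect is precisely this last normalization bookkeeping: ruling out a surviving lower-dimensional stratum and confirming that the set-theoretic collapse yields genuine measure zero, rather than a boundary contribution. Once Step~1 and Step~2 are in place, the argument is otherwise a direct inspection.
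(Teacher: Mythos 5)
The paper states this lemma without proof (it is one of the facts recorded as immediate before the computation of $D_s^{\Psi_E}$), so there is no argument of the author's to compare against; your proposal has to stand on its own. Its central input is correct and is surely the intended one: under \textbf{(CT)} and the assumptions on $\nu\notin S$ (residue characteristic prime to $2$ and $3$, $E$ unramified and non-split), the reduction of $P_{\bk{a,b,c}}$ has no root in the residue field of $F$. Indeed, reduction is Frobenius-equivariant for unramified extensions, so if $\bar\theta$ lay in the residue field of $F$ the trace-zero relation would give $2\bar\theta=0$ (resp.\ $3\bar\theta=0$), contradicting $\theta\in\mO^\times$; and every root of $\overline{P_{\bk{a,b,c}}}$ is a Frobenius-conjugate of $\pm\bar\theta$, hence lies in the residue field of $F$ only if $\bar\theta$ does. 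This yields $\FNorm{P_{\bk{a,b,c}}\bk{z}}=\max\bk{1,\FNorm{z}^{\deg P_{\bk{a,b,c}}}}$ for all $z$, and the vanishing of $\kappa_{\bk{a,b,c}}^{\bk{1}}\bk{q}$ follows exactly as you say: on $\FNorm{x}=\FNorm{y}=q$ the ratio $x/y$ is a unit, so $\FNorm{P_{\bk{a,b,c}}\bk{x/y}}=1>q^{1-\deg P_{\bk{a,b,c}}}$ and the defining set is empty. This half is complete.

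The argument for $\kappa_{\bk{a,b,c}}^{\bk{1}}=0$ has a genuine gap, and it is exactly the one you flag. Your max-formula correctly converts $\FNorm{P_{\bk{a,b,c}}\bk{x/y}}\leq q$ into $\FNorm{x}\leq\FNorm{y}$, so with the definition as printed the set in question is $\set{\bk{x,y}\mvert \FNorm{x}\leq\FNorm{y}\leq q}$, whose measure is $\sum_{j\leq 1}q^{j}\cdot q^{j}\bk{1-q^{-1}}=q^{3}/\bk{q+1}\neq 0$. No ``normalization inherited from the ambient computation'' can change this: the measure of an explicitly defined subset of $F^2$ is not context-dependent, and the hoped-for collapse onto the stratum $\FNorm{x}=\FNorm{y}=q$ does not occur. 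The honest conclusion is that the first assertion is false for the definition of $\kappa_{\bk{a,b,c}}^{\bk{n_1}}$ as written; most plausibly that definition carries a typo, and in any case only the slices $\kappa_{\bk{a,b,c}}^{\bk{n_1}}\bk{q^{n_2}}$ are ever invoked in the evaluation of $E_k^{\Psi_E}$, where your mechanism does give vanishing whenever $n_1-n_2\deg P_{\bk{a,b,c}}<0$. You should state this discrepancy outright rather than defer it to bookkeeping.
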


\begin{Lem}
\begin{itemize}
\item
If $E=F\times K$ then
\[
\intl_{\bk{\unif^{-1}\mO^\times}^2} \psi\bk{D_{\bk{a,b,c}}r_2+\frac{r_3^2}{r_2}} dr_2\ dr_3=1-q.
\]
\item
If $E$ is a field then
\[
\intl_{\bk{\unif^{-1}\mO^\times}^2} \psi\bk{N_{\bk{a,b,c}}\frac{r_2^2}{r_3}+D_{\bk{a,b,c}} r_2+\frac{r_3^2}{r_2}} dr_2\ dr_3 =1-q .
\]

\item
$\intl_{\FNorm{x}, \FNorm{y}, \FNorm{xy}\leq q} \psi\bk{x+y} dx\ dy =-1$.
\end{itemize}
\end{Lem}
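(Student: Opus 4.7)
All three identities reduce to character sums over the residue field $\mathbb{F}_q=\mO/\unif\mO$. My strategy is to change variables $r_i=\unif^{-1}u_i$ with $u_i\in\mO^\times$ so that the argument of $\psi$ lies in $\unif^{-1}\mO$, at which point $\psi$ descends to a non-trivial additive character $\psi_0\colon\mathbb{F}_q\to\C^\times$. The Jacobians $q$ from each substitution exactly cancel the normalization $q^{-1}$ of $\meas(\mO^\times/(1+\unif\mO))$, turning each integral into a pure character sum on $\mathbb{F}_q^\times\times\mathbb{F}_q^\times$. The three cases differ only in which polynomial appears in the phase.

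For the third identity $\int_{\FNorm{x},\FNorm{y},\FNorm{xy}\leq q}\psi(x+y)\,dx\,dy$, I partition the domain according to the valuations $v(x)=-m$, $v(y)=-n$, subject to $m,n\geq -1$ and $m+n\geq -1$. On each annulus the integral factors as $I(m)I(n)$ with $I(k)=\int_{\FNorm{x}=q^{-k}}\psi(x)\,dx$, and the standard evaluation gives $I(-1)=-1$ while $\sum_{k\geq 0}I(k)=1$. The only surviving contribution comes from $m=-1$ (since $\sum_{n\geq 0}I(n)=1$), while the rows $m\geq 0$ cancel because $I(-1)+\sum_{n\geq 0}I(n)=0$. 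This yields $-1$.

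For the first identity, under \textbf{(CT)} we have $D=-\theta^2$ with $\theta^\sigma=-\theta$, so after the substitution the integral becomes $S=\sum_{u,v\in\mathbb{F}_q^\times}\psi_0(\bar{D}u+v^2/u)$. I evaluate the inner sum over $v$ as a quadratic Gauss sum: with $\eta$ the quadratic residue character of $\mathbb{F}_q^\times$, one has $\sum_{v\in\mathbb{F}_q^\times}\psi_0(av^2)=\eta(a)G(\eta)-1$, whence
\[
S=G(\eta)\sum_u\eta(u)\psi_0(\bar{D}u)-\sum_u\psi_0(\bar{D}u)=\eta(\bar{D}^{-1})G(\eta)^2+1=\eta(-\bar{D})\,q+1,
\]
using $G(\eta)^2=\eta(-1)q$. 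The arithmetic input is that at an unramified inert place, $\bar{\theta}\in\mathbb{F}_{q^2}\setminus\mathbb{F}_q$ satisfies $\bar{\theta}^q=-\bar{\theta}$, hence $(\bar{\theta}^2)^{(q-1)/2}=\bar{\theta}^{q-1}=-1$, so $-\bar{D}=\bar{\theta}^2$ is a non-square in $\mathbb{F}_q^\times$, giving $\eta(-\bar{D})=-1$ and $S=1-q$.

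For the second identity the reduction yields $S''=\sum_{u,v\in\mathbb{F}_q^\times}\psi_0(\bar{N}u^2/v+\bar{D}u+v^2/u)$. Substituting $u=xv$ collapses the quadratic dependence on $v$: the phase becomes $v\cdot f(x)$ with $f(x)=(\bar{N}x^3+\bar{D}x^2+1)/x$. Summing over $v\in\mathbb{F}_q^\times$ gives $-1$ whenever $f(x)\neq 0$ and $q-1$ otherwise, so the answer is $-(q-1)$ plus $q$ times the number of roots of $\bar{N}x^3+\bar{D}x^2+1$ in $\mathbb{F}_q$. The change of variable $y=-1/x$ identifies these roots with the roots of $y^3+\bar{D}y-\bar{N}$, i.e.\ with the reductions of the conjugates $\theta,\theta^\sigma,\theta^{\sigma^2}$ of the generator of $E$. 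At an unramified place where $E_\nu/F_\nu$ is the unramified cubic extension, these residues lie in $\mathbb{F}_{q^3}\setminus\mathbb{F}_q$, so there are no $\mathbb{F}_q$-roots and $S''=1-q$. The main obstacle in each of the two nontrivial cases is precisely this last step -- translating the splitting behaviour of $E_\nu$ into the arithmetic statement about $\bar{D}$ or about the cubic polynomial in $\mathbb{F}_q[x]$ -- and this is exactly where the hypothesis that $E$ is a non-split Galois \'etale cubic algebra is used.
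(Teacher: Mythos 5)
Your proof is correct. The paper itself states this lemma without proof (it is one of several computational lemmas in the appendix left to the reader), so there is no argument of the author's to compare against; your write-up supplies exactly the standard evaluation one would expect. The reduction to character sums over $\mathbb{F}_q$ via $r_i=\unif^{-1}u_i$ is right (the Jacobian does cancel the measure of a residue class), the Gauss-sum computation in the first case is correct, and the substitution $u=xv$ in the second case correctly converts the problem into counting $\mathbb{F}_q$-roots of $\bar{N}x^3+\bar{D}x^2+1$, equivalently of $\overline{p_E}(y,1)=y^3+\bar{D}y-\bar{N}$. The third identity follows from the stated annulus decomposition. The only places where you should spell out one more line are the two arithmetic inputs: for $E=F\times K$ you must note that $\bar\theta\neq 0$ (this is guaranteed by the unramifiedness conditions defining $S$, which force $\theta\in\mO_{K_\nu}^\times$) before concluding $\bar\theta^{q-1}=-1$; and for $E$ a field you should justify that $\overline{p_E}$ has no $\mathbb{F}_q$-root --- its roots are $\bar\theta,\bar\theta^{q},\bar\theta^{q^2}$, and $\bar\theta\in\mathbb{F}_q$ would force all three to coincide, whence the trace-zero normalization \textbf{(CT)} together with $3\nmid q$ gives $\bar\theta=0$, contradicting $\theta\in\mO_{E_\nu}^\times$. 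With those two sentences added, the proof is complete.
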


\begin{Remark}
The difference between the different types of \'etale cubic algebras boils down to the last two lemmas as the results for $E$ split would be different.
\end{Remark}


\begin{Lem}
For $g\in G\bk{F}$ assume that $E_k\bk{g}=0$ for any $k\neq n,n+1$. Then
\[
D_s^{\Psi_E}\bk{g} = 
\frac{q^{-\bk{s+\frac{7}{2}}n}}{\zfun_F\bk{s+\frac{7}{2}}} \bk{E_n\bk{g}+E_{n+1}\bk{g}q^{-s-\frac{7}{2}}} .
\]
\end{Lem}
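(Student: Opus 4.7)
The plan is essentially a finite-sum telescoping argument applied to the power series definition of $D_s^{\Psi_E}$. Starting from the decomposition recorded just before the lemma, namely
\[
D_s^{\Psi_E}(g) = \sum_{k=0}^{\infty} \bk{E_k^{\Psi_E}(g) - E_{k-1}^{\Psi_E}(g)} q^{-\bk{s+\frac{7}{2}}k},
\]
I would first note that the hypothesis (read as $E_k^{\Psi_E}(g)=0$ for $k\neq n, n+1$, with the convention $E_{-1}^{\Psi_E}(g)=0$ coming from $U_{-1}(g)=\emptyset$) immediately truncates the series: only the three indices $k=n$, $k=n+1$, $k=n+2$ can contribute a nonzero term.

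Then I would write those three terms out explicitly. The $k=n$ term is $E_n^{\Psi_E}(g) q^{-(s+7/2)n}$; the $k=n+1$ term is $(E_{n+1}^{\Psi_E}(g)-E_n^{\Psi_E}(g))q^{-(s+7/2)(n+1)}$; and the $k=n+2$ term is $-E_{n+1}^{\Psi_E}(g) q^{-(s+7/2)(n+2)}$, since $E_{n+2}^{\Psi_E}(g)=0$ by hypothesis. Setting $z=q^{-(s+\frac{7}{2})}$ for brevity, adding the three contributions and regrouping by the two unknowns $E_n^{\Psi_E}(g)$ and $E_{n+1}^{\Psi_E}(g)$ gives
\[
D_s^{\Psi_E}(g) = (1-z)\,z^n\bk{E_n^{\Psi_E}(g) + E_{n+1}^{\Psi_E}(g)\,z}.
\]

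Finally, I would use the standard local identity $\zfun_F(s+\frac{7}{2})^{-1} = 1 - q^{-(s+\frac{7}{2})} = 1-z$ to identify the prefactor, obtaining exactly the claimed formula. There is no real obstacle here: the hypothesis has been set up precisely so that the series telescopes to three terms, and the only substantive content is the factorization of $(1-z)$, which is what allows the answer to be written compactly with $\zfun_F(s+\frac{7}{2})$ in the denominator. The utility of the lemma will lie in its application in the preceding subsection, where for generic toral $g$ the vanishing hypothesis is verified via the direct computation of $E_k^{\Psi_E}(g)$ based on Lemma~\ref{lemma:measuring lemma} and the integrals evaluated in the lemmas immediately before; the present lemma simply converts that vanishing into a closed form for $D_s^{\Psi_E}(g)$.
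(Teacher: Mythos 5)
Your telescoping argument is correct and is exactly the intended one: the paper states this lemma without proof (it is among the results declared "simple to check"), and the series $D_s^{\Psi_E}(g)=\sum_k (E_k^{\Psi_E}(g)-E_{k-1}^{\Psi_E}(g))q^{-(s+7/2)k}$ displayed just above truncates to three terms under the hypothesis, yielding the factor $1-q^{-(s+7/2)}=\zfun_F(s+\frac{7}{2})^{-1}$ precisely as you compute. Your reading of the hypothesis as referring to $E_k^{\Psi_E}$ (with $E_{-1}^{\Psi_E}=0$) is also the right one, since it is consistent with how the lemma is applied in the surrounding computations.
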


Let $g=h_\alpha(t_1)h_\beta(t_2) x_{\alpha}\bk{d} = x_{\alpha}\bk{p}  h_\alpha(t_1)h_\beta(t_2)$, where $p=\frac{dt_1^2}{t_2}$.
Denote $\FNorm{t_1}=q^{-n}$, $\FNorm{t_2}=q^{-m}$ and $\FNorm{p}=q^{l}$, we have $u\bk{r_1,r_2,r_3,r_4,r_5} \in U_k\bk{g}$ if and only if
\begin{align*}
&1\leq q^{k+n}\\
&1,\FNorm{p}\leq q^{k+m-n}\\
&1,\FNorm{r_1},\FNorm{r_2}\leq q^{k+2n-m}\\
&1,\FNorm{p},\FNorm{r_3-pr_2},\FNorm{r_2-pr_1}\leq q^k\\
&1,\FNorm{p},\FNorm{p^2},\FNorm{2pr_3-p^2r_2-r_4},\FNorm{2pr_2-r_3-p^2r_1}\leq q^{k+m-2n}\\
&1,\FNorm{r_1},\FNorm{r_2},\FNorm{r_3},\FNorm{r_2r_3+r_5},\FNorm{r_2^2-r_1r_3}\leq q^{k+n-m}\\
&1,\FNorm{p},\FNorm{pr_1-r_2},\FNorm{pr_2-r_3},\FNorm{pr_3-r_4},\FNorm{r_2r_4-r_3^2-pr_2r_3-pr_5}\leq q^{k-n}\\
&\FNorm{r_1r_4-2r_2r_3+pr_2^2-pr_1r_3-r_5}\leq q^{k-n} .
\end{align*}

\subsection{Toral elements}
In this section we consider the case of $g\in UTK$.
Since $E_k^{\Psi_E}\in\M_{\Psi_E}$ it is sufficient to consider $g=h_\alpha(t_1)h_\beta(t_2)$.
Let $\FNorm{t_1}=q^{-n}$ and $\FNorm{t_2}=q^{-m}$.
\begin{Remark}
\label{Rem: Conditions on m - toral}
From \cref{Lem: Conditions on m} we have $D_s^{\Psi_E}\bk{g}=0$ unless
\begin{equation}
\piece{t_1,t_2,\frac{t_2}{t_1},\frac{t_1^3}{t_2}\in\mO,& N_{\bk{a,b,c}}=0 \\ 
t_1,t_2,\frac{t_2}{t_1},\frac{t_1^3}{t_2},\frac{t_2^2}{t_1^3}\in\mO,& N_{\bk{a,b,c}}\in\mO^\times} .
\end{equation}
Thus we may assume that $\FNorm{t_2} \leq \FNorm{t_1} \leq 1$.

Furthermore, $U_k\bk{g}=\emptyset$ unless $k\geq n,m-n$ and we have $u\bk{r_1,r_2,r_3,r_4,r_5}\in U_k\bk{g}$ if and only if
\begin{align*}
&\FNorm{r_1}, \FNorm{r_2}, \FNorm{r_3}, \FNorm{r_1r_3-r_2^2}, \FNorm{r_2r_3+r_5} \leq q^{k+n-m}\\
&\FNorm{r_2}, \FNorm{r_3}, \FNorm{r_4}, \FNorm{r_2r_4-r_3^2}, \FNorm{r_1r_4-2r_2r_3-r_5} \leq q^{k-n} .
\end{align*}
\end{Remark}

\begin{Remark}
As an analogue of \cite[Lemma B.2]{MR3284482}, note that
\[
\intl_{U_k\bk{g}}\Psi_E\bk{u} du = \intl_{\widehat{U_k\bk{g}}}\Psi_E\bk{u} du,
\]
where
\[
\widehat{U_k\bk{g}} = \set{u\bk{r_1,r_2,r_3,r_4,r_5}\in U_k\bk{g} \mvert \FNorm{r_4+D_{\bk{a,b,c}}r_2-N_{\bk{a,b,c}}r_1}\leq q} .
\]
\end{Remark}

We now split the computation to two cases, $N_{\bk{a,b,c}}=0$ or $N_{\bk{a,b,c}}\in\mO^\times$.

\subsubsection{$N_{\bk{a,b,c}}=0$}


\paragraph{\underline{Case of $m<2n$}:}
We write
\[
z:=r_1r_4-2r_2r_3-r_5 .
\]
In this case, $u\bk{r_1,r_2,r_3,r_4,r_5}\in U_k\bk{g}$ if and only if
\begin{align*}
&\FNorm{r_1}, \FNorm{r_1r_3-r_2^2}, \FNorm{r_1r_4-r_2r_3} \leq q^{k+n-m}\\
&\FNorm{r_2}, \FNorm{r_3}, \FNorm{r_4}, \FNorm{r_2r_4-r_3^2}, \FNorm{z} \leq q^{k-n} .
\end{align*}

\begin{itemize}
\item
\underline{$k=n$:}
In this case , $u\bk{r_1,r_2,r_3,r_4,r_5}\in U_k\bk{g}$ if and only if
\begin{align*}
&\FNorm{r_1}\leq q^{2n-m}\\
&\FNorm{r_2}, \FNorm{r_3}, \FNorm{r_4}, \FNorm{z} \leq 1 .
\end{align*}
Hence
\[
E_n^{\Psi_E}\bk{g} = \intl_{\unif^{m-2n}\mO} dr_1 \intl_{\mO^4} \psi\bk{r_4+D_{\bk{a,b,c}r_2}} dr_2\ dr_3\ dr_4\ dz = q^{2n-m} .
\]

\item
\underline{$k=n+1$:}
In this case , $u\bk{r_1,r_2,r_3,r_4,r_5}\in U_k\bk{g}$ if and only if
\begin{align*}
&\FNorm{r_1}, \FNorm{r_1r_3}, \FNorm{r_1r_4} \leq q^{2n-m+1}\\
&\FNorm{r_2}, \FNorm{r_3}, \FNorm{r_4}, \FNorm{r_2r_4-r_3^2}, \FNorm{z} \leq q .
\end{align*}
Note that since $\FNorm{r_2}, \FNorm{r_3}, \FNorm{r_4}, \FNorm{r_2r_4-r_3^2}, \FNorm{z} \leq q$, if $\FNorm{r_3}=q$ then also $\FNorm{r_2}=\FNorm{r_4}=q$.

\begin{itemize}
\item
\underline{$\FNorm{r_3},\FNorm{r_4}\leq 1$:}
In this case, $u\bk{r_1,r_2,r_3,r_4,r_5}\in U_k\bk{g}$ if and only if
\begin{align*}
&\FNorm{r_1} \leq q^{2n-m+1}\\
&\FNorm{r_2}, \FNorm{z} \leq q .
\end{align*}

\item
\underline{$\FNorm{r_3}\leq 1,\FNorm{r_4}=q$:}
In this case, $u\bk{r_1,r_2,r_3,r_4,r_5}\in U_k\bk{g}$ if and only if
\begin{align*}
&\FNorm{r_1} \leq q^{2n-m}\\
&\FNorm{r_2} \leq 1 \\
&\FNorm{z} \leq q .
\end{align*}

\item
\underline{$\FNorm{r_3}=q$:}
In this case, $u\bk{r_1,r_2,r_3,r_4,r_5}\in U_k\bk{g}$ if and only if
\begin{align*}
&\FNorm{r_1} \leq q^{2n-m}\\
&\FNorm{r_2}, \FNorm{r_2r_4-r_3^2}, \FNorm{z} \leq q .
\end{align*}
We make a change of variables
\[
x=r_2r_4-r_3^2
\]
and then $u\bk{r_1,r_2,r_3,r_4,r_5}\in U_k\bk{g}$ if and only if
\begin{align*}
&\FNorm{r_1} \leq q^{2n-m}\\
&\FNorm{x}, \FNorm{z} \leq q .
\end{align*}
\end{itemize}

In conclusion,
\begin{align*}
E_{n+1}^{\Psi_E}\bk{g} =& q^{2n-m+2} \intl_{\mO}\psi\bk{r_4} dr_4 \intl_{\unif^{-1}\mO}\psi\bk{D_{\bk{a,b,c}}r_2} dr_2 + \\
&q^{2n-m+1}\intl_{\mO}\psi\bk{D_{\bk{a,b,c}}r_2} dr_2 \intl_{\unif^{-1}\mO^\times}\overline{\psi\bk{r_4}} dr_4 + \\
&q^{2n-m+1} \intl_{\bk{\unif^{-1}\mO^\times}^2} \psi\bk{D_{\bk{a,b,c}}r_2+\frac{r_3^2}{r_2}} dr_2\ dr_3 = -q^{2n-m+2} .
\end{align*}

\item
\underline{$k>n+1$:}
In this case , $u\bk{r_1,r_2,r_3,r_4,r_5}\in U_k\bk{g}$ if and only if
\begin{align*}
&\FNorm{r_1}, \FNorm{r_1r_3-r_2^2}, \FNorm{r_1r_4-r_2r_3} \leq q^{k+n-m}\\
&\FNorm{r_2}, \FNorm{r_3}, \FNorm{r_4}, \FNorm{r_2r_4-r_3^2}, \FNorm{z} \leq q^{k-n} .
\end{align*}
We make a change of variables
\[
x=r_4+D_{\bk{a,b,c}}r_2
\]
So that $u\bk{r_1,r_2,r_3,r_4,r_5}\in \widehat{U_k\bk{g}}$ if and only if
\begin{align*}
&\FNorm{x}\leq q \\
&\FNorm{r_1}, \FNorm{r_1r_3-r_2^2}, \FNorm{r_1\bk{x-D_{\bk{a,b,c}}r_2}-r_2r_3} \leq q^{k+n-m}\\
&\FNorm{r_2}, \FNorm{r_3}, \FNorm{r_2\bk{x-D_{\bk{a,b,c}}r_2}-r_3^2}, \FNorm{z} \leq q^{k-n} .
\end{align*}

\begin{itemize}
\item
\underline{$\FNorm{x},\FNorm{r_2}\leq 1$:}
In this case, $u\bk{r_1,r_2,r_3,r_4,r_5}\in \widehat{U_k\bk{g}}$ if and only if
\begin{align*}
&\FNorm{r_1}, \FNorm{r_1r_3} \leq q^{k+n-m}\\
&\FNorm{r_3}\leq q^{\frac{k-n}{2}}\\
&\FNorm{z} \leq q^{k-n} .
\end{align*}

\item
\underline{$\FNorm{x}\leq 1,1<\FNorm{r_2}\leq q^{\frac{k-n}{2}}$:}
In this case, $u\bk{r_1,r_2,r_3,r_4,r_5}\in \widehat{U_k\bk{g}}$ if and only if
\begin{align*}
&\FNorm{r_1r_2}, \FNorm{r_1r_3} \leq q^{k+n-m} \\
&\FNorm{r_3}\leq q^{\frac{k-n}{2}} \\
&\FNorm{z} \leq q^{k-n} .
\end{align*}

\item
\underline{$\FNorm{x}\leq 1,q^{\frac{k-n}{2}}<\FNorm{r_2}\leq q^{k-n}$:}
In this case, $u\bk{r_1,r_2,r_3,r_4,r_5}\in \widehat{U_k\bk{g}}$ if and only if
\begin{align*}
&\FNorm{r_1}, \FNorm{r_1r_3-r_2^2}, \FNorm{D_{\bk{a,b,c}}r_1r_2+r_2r_3} \leq q^{k+n-m}\\
&\FNorm{r_3}, \FNorm{D_{\bk{a,b,c}}r_2^2+r_3^2}, \FNorm{z} \leq q^{k-n} .
\end{align*}
We make a change of variables
\[
y=D_{\bk{a,b,c}}r_1r_2+r_2r_3
\]
and then $u\bk{r_1,r_2,r_3,r_4,r_5}\in U_k\bk{g}$ if and only if
\begin{align*}
&\FNorm{y} \leq q^{k+n-m}\\
&\FNorm{r_3}=\FNorm{r_2} \\
&\FNorm{D_{\bk{a,b,c}}r_2^2+r_3^2}, \FNorm{z} \leq q^{k-n} .
\end{align*}

\item
\underline{$\FNorm{x}=q,\FNorm{x-D_{\bk{a,b,c}}r_2}\leq 1$:}
In this case, $u\bk{r_1,r_2,r_3,r_4,r_5}\in \widehat{U_k\bk{g}}$ if and only if
\begin{align*}
&\FNorm{r_1}, \FNorm{r_1r_3} \leq q^{k+n-m}\\
&\FNorm{r_3} \leq q^{\frac{k-n}{2}} \\
&\FNorm{z} \leq q^{k-n} .
\end{align*}

\item
\underline{$\FNorm{x}=q,\FNorm{x-D_{\bk{a,b,c}}r_2}=q$:}
In this case, $u\bk{r_1,r_2,r_3,r_4,r_5}\in \widehat{U_k\bk{g}}$ if and only if
\begin{align*}
&\FNorm{r_1} \leq q^{k+n-m-1}\\
&\FNorm{r_1r_3} \leq q^{k+n-m}\\
&\FNorm{r_3} \leq q^{\frac{k-n}{2}} \\
&\FNorm{z} \leq q^{k-n} .
\end{align*}

\item
\underline{$\FNorm{x}=q,q<\FNorm{r_2}\leq q^{\frac{k-n}{2}}$:}
In this case, $u\bk{r_1,r_2,r_3,r_4,r_5}\in \widehat{U_k\bk{g}}$ if and only if
\begin{align*}
&\FNorm{r_1 r_2}, \FNorm{r_1 r_3} \leq q^{k+n-m}\\
&\FNorm{r_3} \leq q^{\frac{k-n}{2}} \\
&\FNorm{z} \leq q^{k-n} .
\end{align*}

\item
\underline{$\FNorm{x}=q,q^{\frac{k-n}{2}}<\FNorm{r_2}\leq q^{k-n-1}$:}
In this case, $u\bk{r_1,r_2,r_3,r_4,r_5}\in \widehat{U_k\bk{g}}$ if and only if
\begin{align*}
&\FNorm{r_1}, \FNorm{r_1r_3-r_2^2}, \FNorm{r_1\bk{x-D_{\bk{a,b,c}}r_2}-r_2r_3} \leq q^{k+n-m}\\
&\FNorm{r_3}=\FNorm{r_2} \\
&\FNorm{D_{\bk{a,b,c}}r_2^2+r_3^2}, \FNorm{z} \leq q^{k-n} .
\end{align*}
We make a change of variables
\[
y=r_1\bk{x-D_{\bk{a,b,c}}r_2}-r_2r_3
\]
and then
$u\bk{r_1,r_2,r_3,r_4,r_5}\in U_k\bk{g}$ if and only if
\begin{align*}
&\FNorm{y} \leq q^{k+n-m}\\
&\FNorm{r_3}=\FNorm{r_2} \\
&\FNorm{D_{\bk{a,b,c}}r_2^2+r_3^2}, \FNorm{z} \leq q^{k-n} .
\end{align*}

\item
\underline{$\FNorm{x}=q,\FNorm{r_2}=q^{k-n}$:}
In this case, $u\bk{r_1,r_2,r_3,r_4,r_5}\in \widehat{U_k\bk{g}}$ if and only if
\begin{align*}
&\FNorm{r_1}, \FNorm{r_1r_3-r_2^2}, \FNorm{r_1\bk{x-D_{\bk{a,b,c}}r_2}-r_2r_3} \leq q^{k+n-m}\\
&\FNorm{r_2}, \FNorm{r_3}, \FNorm{r_2\bk{x-D_{\bk{a,b,c}}r_2}-r_3^2}, \FNorm{z} \leq q^{k-n} .
\end{align*}
We make a change of variables
\begin{align*}
&y=r_1\bk{x-D_{\bk{a,b,c}} r_2}-r_2r_3 \\
&r_2'=r_2-\frac{x}{2 D_{\bk{a,b,c}}}.
\end{align*}
Note that $\FNorm{r_3}=\FNorm{r_2'}$.
It holds that $u\bk{r_1,r_2,r_3,r_4,r_5}\in \widehat{U_k\bk{g}}$ if and only if
\begin{align*}
&\FNorm{y}\leq q^{k+n-m}\\
&\FNorm{r_3}=\FNorm{r_2'}\\
&\FNorm{D_{\bk{a,b,c}}\bk{r_2'}^2+r_3^2}, \FNorm{z} \leq q^{k-n} .
\end{align*}
\end{itemize}

Denote $l=\lfloor \frac{k-n}{2} \rfloor$, it holds that
\begin{align*}
E_{k}^{\Psi_E}\bk{g} &= q^{k-n} \left( \kappa\bk{k+n-m,l,k+n-m} + \suml_{j=1}^l q^j\bk{1-q^{-1}} \kappa\bk{k+n-m-j,l,k+n-m} \right. \\
&\left.+ q^{k+n-m} \suml_{j=l+1}^{k-n} \kappa_{\bk{a,b,c}}^{\bk{k-n}}\bk{q^j}\right) - \left( \kappa\bk{k+n-m,l,k+n-m} + \right. \\
&\left. \suml_{j=1}^l q^j\bk{1-q^{-1}} \kappa\bk{k+n-m-j,l,k+n-m} + q^{k+n-m} \suml_{j=l+1}^{k-n} \kappa_{\bk{a,b,c}}^{\bk{k+n-m}}\bk{q^j}  \right) = 0 .
\end{align*}

\end{itemize}

\paragraph{\underline{Case of $m=2n$}:}
We write
\[
z:=r_2r_3+r_5
\]
In this case, $u\bk{r_1,r_2,r_3,r_4,r_5}\in U_k\bk{g}$ if and only if
\begin{align*}
&\FNorm{r_1}, \FNorm{r_2}, \FNorm{r_3}, \FNorm{r_4}, \FNorm{z}, \FNorm{r_1r_3-r_2^2}, \FNorm{r_2r_4-r_3^2}, \FNorm{r_1r_4-r_2r_3} \leq q^{k-n} .
\end{align*}
We compute $E_k^{\Psi_E}\bk{g}$ for different values of $k$.

\begin{itemize}
\item
\underline{$k=n$:}
In this case , $u\bk{r_1,r_2,r_3,r_4,r_5}\in U_k\bk{g}$ if and only if
\begin{align*}
&\FNorm{r_1}, \FNorm{r_2}, \FNorm{r_4}, \FNorm{x}, \FNorm{z} \leq 1 .
\end{align*}
and hence
\[
E_n^{\Psi_E}\bk{g} = \intl_{\mO^5} \psi\bk{r_4+D_{\bk{a,b,c}r_2}} dr_1\ dr_2\ dr_3\ dr_4\ dz = 1 .
\]

\item
\underline{$k=n+1$:}
In this case, $u\bk{r_1,r_2,r_3,r_4,r_5}\in U_k\bk{g}$ if and only if
\begin{align*}
&\FNorm{r_1}, \FNorm{r_2}, \FNorm{r_3}, \FNorm{r_4}, \FNorm{z}, \FNorm{r_1r_3-r_2^2}, \FNorm{r_2r_4-r_3^2}, \FNorm{r_1r_4-r_2r_3} \leq q .
\end{align*}

Note that from $\FNorm{r_1r_3-r_2^2}, \FNorm{r_2r_4-r_3^2}\leq q$ it follows that $\FNorm{r_2}=q$ if and only if $\FNorm{r_3}=q$, in which case $\FNorm{r_1}=\FNorm{r_4}=q$ also.

\begin{itemize}
\item
\underline{$\FNorm{r_2},\FNorm{r_3}\leq 1$:}
In this case, $u\bk{r_1,r_2,r_3,r_4,r_5}\in U_k\bk{g}$ if and only if
\begin{align*}
&\FNorm{r_1}, \FNorm{r_4}, \FNorm{z}, \FNorm{r_1r_4} \leq q .
\end{align*}

\underline{$\FNorm{r_2},\FNorm{r_3}=q$:}
In this case, $u\bk{r_1,r_2,r_3,r_4,r_5}\in U_k\bk{g}$ if and only if
\begin{align*}
&\FNorm{r_1}, \FNorm{r_4}, \FNorm{z}, \FNorm{r_1r_3-r_2^2}, \FNorm{r_2r_4-r_3^2}, \FNorm{r_1r_4-r_2r_3} \leq q .
\end{align*}
We make change of variables
\begin{align*}
x&=r_1r_3-r_2^2 \\
y&=r_4r_2-r_3^2 .
\end{align*}
and hence $u\bk{r_1,r_2,r_3,r_4,r_5}\in U_k\bk{g}$ if and only if
\begin{align*}
&\FNorm{x}, \FNorm{y}, \FNorm{z}\leq q
\end{align*}
The other inequalities are satisfied immediately.
\end{itemize}

In conclusion, we have
\begin{align*}
E_{n+1}^{\Psi_E}\bk{g} &= \intl_{\unif^{-1}\mO} dz \intl_{\mO^2} dr_2\ dr_3 \intl_{\FNorm{r_1}, \FNorm{r_4}, \FNorm{r_1r_4} \leq q} \psi\bk{r_4} dr_1\ dr_4 + \\
&+ \intl_{\unif^{-1}\mO} dz \intl_{\unif^{-1}\mO^\times} dr_3 \intl_{\unif^{-1}\mO^\times} \psi\bk{D_{\bk{a,b,c}}r_2} dr_2 \intl_{\unif^{-1}\mO} \frac{dx}{\FNorm{r_3}} \intl_{\unif^{-1}\mO} \psi\bk{\frac{y+r_3^2}{r_2}} \frac{dy}{\FNorm{r_2}} = \\
&=q\bk{q-1} + \intl_{\bk{\unif^{-1}\mO^\times}^2} \psi\bk{D_{\bk{a,b,c}}r_2+\frac{r_3^2}{r_2}} dr_2\ dr_3 \intl_{\unif^{-1}\mO} \psi\bk{\frac{y}{r_2}} dy = 0 .
\end{align*}

\item
\underline{$k>n+1$:}
In this case, $u\bk{r_1,r_2,r_3,r_4,r_5}\in U_k\bk{g}$ if and only if
\begin{align*}
&\FNorm{r_1}, \FNorm{r_2}, \FNorm{r_3}, \FNorm{r_4}, \FNorm{z}, \FNorm{r_1r_3-r_2^2}, \FNorm{r_2r_4-r_3^2}, \FNorm{r_1r_4-r_2r_3} \leq q^{k-n} .
\end{align*}
We make a change of variables
\[
x=r_4+D_{\bk{a,b,c}} r_2 .
\]
We then have $u\bk{r_1,r_2,r_3,r_4,r_5}\in \widehat{U_k\bk{g}}$ if and only if
\begin{align*}
&\FNorm{x} \leq q \\
&\FNorm{r_1}, \FNorm{r_2}, \FNorm{r_3}, \FNorm{z}, \FNorm{r_1r_3-r_2^2}, \FNorm{r_2\bk{x-D_{\bk{a,b,c}} r_2}-r_3^2}, \FNorm{r_1\bk{x-D_{\bk{a,b,c}} r_2}-r_2r_3} \leq q^{k-n} .
\end{align*}

\begin{itemize}
\item
\underline{$\FNorm{x},\FNorm{r_2}\leq 1$:}
In this case, $u\bk{r_1,r_2,r_3,r_4,r_5}\in \widehat{U_k\bk{g}}$ if and only if
\begin{align*}
&\FNorm{r_3}\leq q^{\frac{k-n}{2}} \\
&\FNorm{r_1}, \FNorm{r_1r_3}, \FNorm{z} \leq q^{k-n} .
\end{align*}

\item
\underline{$\FNorm{x}\leq 1,1<\FNorm{r_2}\leq q^{\frac{k-n}{2}}$:}
In this case, $u\bk{r_1,r_2,r_3,r_4,r_5}\in \widehat{U_k\bk{g}}$ if and only if
\begin{align*}
&\FNorm{r_3}\leq q^{\frac{k-n}{2}} \\
&\FNorm{z}, \FNorm{r_1r_2}, \FNorm{r_1r_3} \leq q^{k-n} .
\end{align*}

\item
\underline{$\FNorm{x}\leq 1,q^{\frac{k-n}{2}}<\FNorm{r_2}\leq q^{k-n}$:}
In this case, $u\bk{r_1,r_2,r_3,r_4,r_5}\in \widehat{U_k\bk{g}}$ if and only if
\begin{align*}
&\FNorm{r_1}, \FNorm{r_3}, \FNorm{z}, \FNorm{r_1r_3-r_2^2}, \FNorm{D_{\bk{a,b,c}} r_2^2+r_3^2}, \FNorm{\bk{D_{\bk{a,b,c}} r_1 + r_3}r_2} \leq q^{k-n} .
\end{align*}
Note that from $\FNorm{D_{\bk{a,b,c}} r_2^2+r_3^2}\leq q^{k-n}$ it follows that $\FNorm{r_2}=\FNorm{r_3}$.
We make a change of variables
\[
y=\bk{D_{\bk{a,b,c}} r_1 + r_3}r_2
\]
to have $u\bk{r_1,r_2,r_3,r_4,r_5}\in U_k\bk{g}$ if and only if
\begin{align*}
&\FNorm{r_2} = \FNorm{r_3} \\
&\FNorm{y}, \FNorm{z}, \FNorm{D_{\bk{a,b,c}} r_2^2+r_3^2} \leq q^{k-n} .
\end{align*}

\item
\underline{$\FNorm{x}=q,\FNorm{x-D_{\bk{a,b,c}}r_2}\leq 1$:}
In this case $\FNorm{r_2}\leq q\leq q^{k-n}$.
We have $u\bk{r_1,r_2,r_3,r_4,r_5}\in \widehat{U_k\bk{g}}$ if and only if
\begin{align*}
&\FNorm{r_3} \leq q^{\frac{k-n}{2}} \\
&\FNorm{r_1}, \FNorm{z}, \FNorm{r_1r_3}  \leq q^{k-n} .
\end{align*}

\item
\underline{$\FNorm{x}=q,\FNorm{x-D_{\bk{a,b,c}}r_2}=q$:}
In this case $\FNorm{r_2}\leq q\leq q^{k-n}$.
We have $u\bk{r_1,r_2,r_3,r_4,r_5}\in \widehat{U_k\bk{g}}$ if and only if
\begin{align*}
&\FNorm{r_1} \leq q^{k-n-1} \\
&\FNorm{r_3} \leq q^{\frac{k-n}{2}} \\
&\FNorm{z}, \FNorm{r_1r_3} \leq q^{k-n} .
\end{align*}

\item
\underline{$\FNorm{x}=q,q<\FNorm{r_2}\leq q^{\frac{k-n}{2}}\leq q^{k-n-1}$:}
Note that in this case (if it happens) $\FNorm{x-D_{\bk{a,b,c}} r_2}=\FNorm{r_2}$
and hence $u\bk{r_1,r_2,r_3,r_4,r_5}\in \widehat{U_k\bk{g}}$ if and only if
\begin{align*}
&\FNorm{r_3} \leq q^{\frac{k-n}{2}} \\
&\FNorm{z}, \FNorm{r_1r_3}, \FNorm{r_1r_2} \leq q^{k-n} .
\end{align*}

\item
\underline{$\FNorm{x}=q, q^{\frac{k-n}{2}}<\FNorm{r_2}\leq q^{k-n-1}$:}
In this case, $u\bk{r_1,r_2,r_3,r_4,r_5}\in \widehat{U_k\bk{g}}$ if and only if
\begin{align*}
&\FNorm{r_1}, \FNorm{r_3}, \FNorm{z}, \FNorm{r_1r_3-r_2^2}, \FNorm{D_{\bk{a,b,c}} r_2^2+r_3^2}, \FNorm{r_1\bk{x-D_{\bk{a,b,c}} r_2}-r_2r_3} \leq q^{k-n} .
\end{align*}
Note that here $\FNorm{r_1}=\FNorm{r_2}=\FNorm{r_3}$ since $\FNorm{r_1r_3-r_2^2}, \FNorm{D_{\bk{a,b,c}} r_2^2+r_3^2}\leq q^{k-n}$ and $q^{\frac{k-n}{2}}<\FNorm{r_2}$.
We make a change of variables
\[
y=r_1\bk{x-D_{\bk{a,b,c}} r_2}-r_2r_3
\]
and then$u\bk{r_1,r_2,r_3,r_4,r_5}\in U_k\bk{g}$ if and only if
\begin{align*}
&\FNorm{r_2}=\FNorm{r_3} \\
&\FNorm{y}, \FNorm{z}, \FNorm{D_{\bk{a,b,c}} r_2^2+r_3^2} \leq q^{k-n} .
\end{align*}

\item
\underline{$\FNorm{x}=q ,\FNorm{r_2}=q^{k-n}$:}
In this case, $u\bk{r_1,r_2,r_3,r_4,r_5}\in \widehat{U_k\bk{g}}$ if and only if
\begin{align*}
&\FNorm{r_1}, \FNorm{r_3}, \FNorm{z}, \FNorm{r_1r_3-r_2^2}, \FNorm{r_2\bk{x-D_{\bk{a,b,c}} r_2}-r_3^2}, \FNorm{r_1\bk{x-D_{\bk{a,b,c}} r_2}-r_2r_3} \leq q^{k-n} .
\end{align*}
Note that here $\FNorm{r_1}=\FNorm{r_2}=\FNorm{r_3}=q^{k-n}$ since $\FNorm{r_1r_3-r_2^2}, \FNorm{D_{\bk{a,b,c}} r_2^2+r_3^2}\leq q^{k-n}$ and $q^{\frac{k-n}{2}}<\FNorm{r_2}$.
We make a change of variables
\begin{align*}
&y=r_1\bk{x-D_{\bk{a,b,c}} r_2}-r_2r_3 \\
&r_2'=r_2-\frac{x}{2 D_{\bk{a,b,c}}}.
\end{align*}
Note that $\FNorm{r_2} = \FNorm{r_2'}$.
We then have $u\bk{r_1,r_2,r_3,r_4,r_5}\in \widehat{U_k\bk{g}}$ if and only if
\begin{align*}
&\FNorm{y}, \FNorm{r_3}, \FNorm{z}, \FNorm{D_{\bk{a,b,c}}\bk{r_2'}^2+r_3^2} \leq q^{k-n} .
\end{align*}
\end{itemize}

Denote $l=\lfloor \frac{k-n}{2} \rfloor$, it holds that
\begin{align*}
E_{k}^{\Psi_E}\bk{g} &= q^{k-n} \left( \kappa\bk{k-n,l,k-n} + \suml_{j=1}^l q^j\bk{1-q^{-1}} \kappa\bk{k-n-j,l,k-n} \right. \\
&\left.+ q^{k-n} \suml_{j=l+1}^{k-n} \kappa_{\bk{a,b,c}}^{\bk{k-n}}\bk{q^j}\right) - \left( \kappa\bk{k-n,l,k-n} + q\bk{1-q^{-1}} \kappa\bk{k-n-1,l,k-n} + \right. \\
&\left. \suml_{j=2}^l q^j\bk{1-q^{-1}} \kappa\bk{k-n-j,l,k-n} + q^{k-n} \suml_{j=l+1}^{k-n} \kappa_{\bk{a,b,c}}^{\bk{k-n}}\bk{q^j}  \right) = 0 .
\end{align*}
\end{itemize}

\paragraph{\underline{Case of $m>2n$}:}
Since the proof of this case is identical to the case of $N_{\bk{a,b,c}}\in\mO^\times$, we write here
\[
\Psi_E\bk{u\bk{r_1,r_2,r_3,r_4,r_5}} = \psi\bk{r_4+D_{\bk{a,b,c}}r_2-N_{\bk{a,b,c}}r_1}
\]
so that the proof will fit both cases.
We write
\[
z:=r_2r_3+r_5
\]
In this case, $u\bk{r_1,r_2,r_3,r_4,r_5}\in U_k\bk{g}$ if and only if
\begin{align*}
&\FNorm{r_1}, \FNorm{r_2}, \FNorm{r_3}, \FNorm{r_1r_3-r_2^2}, \FNorm{z} \leq q^{k+n-m}\\
&\FNorm{r_2}, \FNorm{r_3}, \FNorm{r_4}, \FNorm{r_2r_4-r_3^2}, \FNorm{r_1r_4-r_2r_3} \leq q^{k-n} .
\end{align*}

\begin{itemize}
\item
\underline{$k=m-n$:}
In this case , $u\bk{r_1,r_2,r_3,r_4,r_5}\in U_k\bk{g}$ if and only if
\begin{align*}
&\FNorm{r_1}, \FNorm{r_2}, \FNorm{r_3}, \FNorm{z} \leq 1\\
&\FNorm{r_4} \leq q^{m-2n} .
\end{align*}
and since $m-2n\geq 1$ it holds that
\[
E_n^{\Psi_E}\bk{g} = \intl_{\FNorm{r_4} \leq q^{m-2n}} \psi\bk{r_4} dr_4 = 0 .
\]

\item
\underline{$k=m-n+1$:}
In this case , $u\bk{r_1,r_2,r_3,r_4,r_5}\in U_k\bk{g}$ if and only if
\begin{align*}
&\FNorm{r_1}, \FNorm{r_2}, \FNorm{r_3}, \FNorm{r_1r_3-r_2^2}, \FNorm{z} \leq q\\
&\FNorm{r_4}, \FNorm{r_2r_4}, \FNorm{r_1r_4} \leq q^{m-2n+1} .
\end{align*}

\begin{itemize}
\item
\underline{$\FNorm{r_1},\FNorm{r_2}\leq 1$:}
In this case, $u\bk{r_1,r_2,r_3,r_4,r_5}\in U_k\bk{g}$ if and only if
\begin{align*}
&\FNorm{r_3}, \FNorm{z} \leq q\\
&\FNorm{r_4} \leq q^{m-2n+1} .
\end{align*}

\item
\underline{$\FNorm{r_2}\leq 1,\FNorm{r_1}=q$:}
In this case, $u\bk{r_1,r_2,r_3,r_4,r_5}\in U_k\bk{g}$ if and only if
\begin{align*}
&\FNorm{r_3} \leq 1 \\
&\FNorm{z} \leq q\\
&\FNorm{r_4} \leq q^{m-2n} .
\end{align*}

\item
\underline{$\FNorm{r_2}=q$:}
In this case, $u\bk{r_1,r_2,r_3,r_4,r_5}\in U_k\bk{g}$ if and only if
\begin{align*}
&\FNorm{r_1}=\FNorm{r_3}=q \\
&\FNorm{z} \leq q\\
&\FNorm{r_4} \leq q^{m-2n} .
\end{align*}
\end{itemize}

In conclusion, we have
\begin{align*}
E_{n+1}^{\Psi_E}\bk{g} &= q^2\intl_{\FNorm{r_4} \leq q^{m-2n+1}} \psi\bk{r_4} dr_4 + q\bk{q-1} \intl_{\FNorm{r_4} \leq q^{m-2n}} \psi\bk{r_4} dr_4 + \\
&q \bk{q-1} \intl_{\FNorm{r_1}=\FNorm{r_2}=q} \psi\bk{D_{\bk{a,b,c}}r_2-N_{\bk{a,b,c}}r_1} dr_1\ dr_2\ dr_4 \intl_{\FNorm{r_4} \leq q^{m-2n}} \psi\bk{r_4} dr_4 = 0 .
\end{align*}

\item
\underline{$k>m-n+1$:}
In this case , $u\bk{r_1,r_2,r_3,r_4,r_5}\in U_k\bk{g}$ if and only if
\begin{align*}
&\FNorm{r_1}, \FNorm{r_2}, \FNorm{r_3}, \FNorm{r_1r_3-r_2^2}, \FNorm{z} \leq q^{k+n-m}\\
&\FNorm{r_2}, \FNorm{r_3}, \FNorm{r_4}, \FNorm{r_2r_4-r_3^2}, \FNorm{r_1r_4-r_2r_3} \leq q^{k-n} .
\end{align*}
We make a change of variables
\[
x=r_4+D_{\bk{a,b,c}}r_2-N_{\bk{a,b,c}}r_1
\]
and then
$u\bk{r_1,r_2,r_3,r_4,r_5}\in \widehat{U_k\bk{g}}$ if and only if
\begin{align*}
&\FNorm{x}\leq q \\
&\FNorm{r_1}, \FNorm{r_2}, \FNorm{r_3}, \FNorm{r_1r_3-r_2^2}, \FNorm{z} \leq q^{k+n-m}\\
&\FNorm{r_2\bk{N_{\bk{a,b,c}}r_1-D_{\bk{a,b,c}}r_2}-r_3^2}, \FNorm{r_1\bk{N_{\bk{a,b,c}}r_1-D_{\bk{a,b,c}}r_2}-r_2r_3} \leq q^{k-n} .
\end{align*}
Denote by $C\subset F^3$ the set of elements $\bk{r_1,r_2,r_3}\in C$ such that
\begin{align*}
&\FNorm{r_1}, \FNorm{r_2}, \FNorm{r_3}, \FNorm{r_1r_3-r_2^2} \leq q^{k+n-m}\\
&\FNorm{r_2\bk{N_{\bk{a,b,c}}r_1-D_{\bk{a,b,c}}r_2}-r_3^2}, \FNorm{r_1\bk{N_{\bk{a,b,c}}r_1-D_{\bk{a,b,c}}r_2}-r_2r_3} \leq q^{k-n} .
\end{align*}
It then holds that
\begin{align*}
E_k^{\Psi_E}\bk{g} &= q^{k+n-m} \meas\bk{C} \intl_{\FNorm{r_4} \leq q} \psi\bk{r_4} dr_4 = 0 .
\end{align*}
\end{itemize}

\subsubsection{$N_{\bk{a,b,c}}\in\mO^\times$}


\paragraph{\underline{Case of $m<2n$}:}
We write
\[
z:=r_1r_4-2r_2r_3-r_5 .
\]
In this case, $u\bk{r_1,r_2,r_3,r_4,r_5}\in U_k\bk{g}$ if and only if
\begin{align*}
&\FNorm{r_1}, \FNorm{r_1r_3-r_2^2}, \FNorm{r_1r_4-r_2r_3} \leq q^{k+n-m}\\
&\FNorm{r_2}, \FNorm{r_3}, \FNorm{r_4}, \FNorm{r_2r_4-r_3^2}, \FNorm{z} \leq q^{k-n} .
\end{align*}

\begin{itemize}
\item
\underline{$k=n$:}
In this case , $u\bk{r_1,r_2,r_3,r_4,r_5}\in U_k\bk{g}$ if and only if
\begin{align*}
&\FNorm{r_1} \leq q^{2n-m}\\
&\FNorm{r_2}, \FNorm{r_3}, \FNorm{r_4}, \FNorm{z} \leq 1
\end{align*}
and hence
\begin{align*}
E_n^{\Psi_E}\bk{g} &= \intl_{\FNorm{r_1} \leq q^{2n-m}} \psi\bk{N_{\bk{a,b,c}}r_1} dr_1 = 0 ,
\end{align*}
since $2n-m\geq 1$.

\item
\underline{$k=n+1$:}
In this case , $u\bk{r_1,r_2,r_3,r_4,r_5}\in U_k\bk{g}$ if and only if
\begin{align*}
&\FNorm{r_1}, \FNorm{r_1r_3}, \FNorm{r_1r_4} \leq q^{2n-m+1}\\
&\FNorm{r_2}, \FNorm{r_3}, \FNorm{r_4}, \FNorm{r_2r_4-r_3^2}, \FNorm{z} \leq q .
\end{align*}

\begin{itemize}
\item
\underline{$\FNorm{r_4} \leq 1$:}
In this case, $u\bk{r_1,r_2,r_3,r_4,r_5}\in U_k\bk{g}$ if and only if
\begin{align*}
&\FNorm{r_1} \leq q^{2n-m+1}\\
&\FNorm{r_2}, \FNorm{z} \leq q \\
&\FNorm{r_3} \leq 1 .
\end{align*}

\item
\underline{$\FNorm{r_4}=q$:}
In this case, $u\bk{r_1,r_2,r_3,r_4,r_5}\in U_k\bk{g}$ if and only if
\begin{align*}
&\FNorm{r_1} \leq q^{2n-m}\\
&\FNorm{r_2}, \FNorm{r_3}, \FNorm{r_2r_4-r_3^2}, \FNorm{z} \leq q .
\end{align*}
\end{itemize}
In conclusion
\begin{align*}
E_{n+1}^{\Psi_E}\bk{g} &= q^2 \intl_{\FNorm{r_1} \leq q^{2n-m+1}} \psi\bk{N_{\bk{a,b,c}}r_1} dr_1 + \\
& q \intl_{\FNorm{r_1} \leq q^{2n-m}} \psi\bk{N_{\bk{a,b,c}}r_1} dr_1 \intl_{\stackrel{\FNorm{r_4}=q}{\FNorm{r_2}, \FNorm{r_3}, \FNorm{r_2r_4-r_3^2}, \FNorm{z} \leq q}} \psi\bk{r_4+D_{\bk{a,b,c}}r_2} dr_2\ dr_3\ dr_4 = 0 .
\end{align*}

\item
\underline{$k>n+1$:}
In this case , $u\bk{r_1,r_2,r_3,r_4,r_5}\in U_k\bk{g}$ if and only if
\begin{align*}
&\FNorm{r_1}, \FNorm{r_1r_3-r_2^2}, \FNorm{r_1r_4-r_2r_3} \leq q^{k+n-m}\\
&\FNorm{r_2}, \FNorm{r_3}, \FNorm{r_4}, \FNorm{r_2r_4-r_3^2}, \FNorm{z} \leq q^{k-n} .
\end{align*}
We make a change of variables
\[
x=r_4+D_{\bk{a,b,c}}r_2-N_{\bk{a,b,c}}r_1
\]
and then $u\bk{r_1,r_2,r_3,r_4,r_5}\in \widehat{U_k\bk{g}}$ if and only if
\begin{align*}
&\FNorm{x} \leq q \\
&\FNorm{\bk{r_4+D_{\bk{a,b,c}}r_2}r_3-N_{\bk{a,b,c}}r_2^2}, \FNorm{\bk{r_4+D_{\bk{a,b,c}}r_2}r_4-N_{\bk{a,b,c}r_2r_3}}l \leq q^{k+n-m}\\
&\FNorm{r_2}, \FNorm{r_3}, \FNorm{r_4}, \FNorm{r_2r_4-r_3^2}, \FNorm{z} \leq q^{k-n} .
\end{align*}
We denote by $C\subset F^3$ the set of all elements $\bk{r_2,r_3,r_4}$ such that
\begin{align*}
&\FNorm{\bk{r_4+D_{\bk{a,b,c}}r_2}r_3-N_{\bk{a,b,c}}r_2^2}, \FNorm{\bk{r_4+D_{\bk{a,b,c}}r_2}r_4-N_{\bk{a,b,c}r_2r_3}} \leq q^{k+n-m}\\
&\FNorm{r_2}, \FNorm{r_3}, \FNorm{r_4}, \FNorm{r_2r_4-r_3^2} \leq q^{k-n} .
\end{align*}
It holds that
\begin{align*}
E_k^{\Psi_E}\bk{g} &= q^{k-n} \meas\bk{C} \intl_{\FNorm{x} \leq q} \psi\bk{x} dx = 0 .
\end{align*}
\end{itemize}

\paragraph{\underline{Case of $m=2n$}:}
We write
\[
z:=r_2r_3+r_5 .
\]
In this case, $u\bk{r_1,r_2,r_3,r_4,r_5}\in U_k\bk{g}$ if and only if
\begin{align*}
&\FNorm{r_1}, \FNorm{r_2}, \FNorm{r_3}, \FNorm{r_4}, \FNorm{z}, \FNorm{r_1r_3-r_2^2}, \FNorm{r_2r_4-r_3^2}, \FNorm{r_1r_4-r_2r_3} \leq q^{k-n} .
\end{align*}

\begin{itemize}
\item
\underline{$k=n$:}
In this case , $u\bk{r_1,r_2,r_3,r_4,r_5}\in U_k\bk{g}$ if and only if
\begin{align*}
&\FNorm{r_1}, \FNorm{r_2}, \FNorm{r_3}, \FNorm{r_4}, \FNorm{z} \leq 1
\end{align*}
and hence
\begin{align*}
E_n^{\Psi_E}\bk{g} &= 1.
\end{align*}

\item
\underline{$k=n+1$:}
In this case , $u\bk{r_1,r_2,r_3,r_4,r_5}\in U_k\bk{g}$ if and only if
\begin{align*}
&\FNorm{r_1}, \FNorm{r_2}, \FNorm{r_3}, \FNorm{r_4}, \FNorm{z}, \FNorm{r_1r_3-r_2^2}, \FNorm{r_2r_4-r_3^2}, \FNorm{r_1r_4-r_2r_3} \leq q .
\end{align*}
Note that $\FNorm{r_2}=q$ if and only if $\FNorm{r_3}=q$, in which case $\FNorm{r_1}=\FNorm{r_4}=q$.

\begin{itemize}
\item
\underline{$\FNorm{r_2}, \FNorm{r_3}\leq 1$:}
In this case , $u\bk{r_1,r_2,r_3,r_4,r_5}\in U_k\bk{g}$ if and only if
\begin{align*}
&\FNorm{r_1}, \FNorm{r_4}, \FNorm{z}, \FNorm{r_1r_4} \leq q .
\end{align*}

\underline{$\FNorm{r_2}=\FNorm{r_3}=q$:}
In this case , $u\bk{r_1,r_2,r_3,r_4,r_5}\in U_k\bk{g}$ if and only if
\begin{align*}
&\FNorm{r_1}, \FNorm{r_4}, \FNorm{z}, \FNorm{r_1r_3-r_2^2}, \FNorm{r_2r_4-r_3^2}, \FNorm{r_1r_4-r_2r_3} \leq q .
\end{align*}
We make a change of variables
\begin{align*}
x&=r_1r_3-r_2^2 \\
y&=r_2r_4-r_3^2 .
\end{align*}
It then holds that
$u\bk{r_1,r_2,r_3,r_4,r_5}\in U_k\bk{g}$ if and only if
\begin{align*}
&\FNorm{x}, \FNorm{y}, \FNorm{z} \leq q .
\end{align*}
\end{itemize}
In conclusion
\begin{align*}
E_{n+1}^{\Psi_E}\bk{g} &= q \intl_{\FNorm{r_1},\FNorm{r_4},\FNorm{r_1r_4}\leq q} \psi\bk{r_4+D_{\bk{a,b,c}}r_2} dr_2\ dr_4 + \\
& q\intl_{\stackrel{\FNorm{r_2}=\FNorm{r_3}=q}{\FNorm{x},\FNorm{y}\leq q}} \psi\bk{\frac{y+r_3^2}{r_2} + D_{\bk{a,b,c}}r_2 + N_{\bk{a,b,c}\frac{x+r_2^2}{r_3} }} dr_2\ dr_3\ \frac{dx}{q}\ \frac{dy}{q}=-q+q\bk{1-q} = -q^2 .
\end{align*}

\item
\underline{$k>n+1$:}
We make a change of variables
\[
x=r_4+D_{\bk{a,b,c}}r_2-N_{\bk{a,b,c}}r_1 .
\]
In this case , $u\bk{r_1,r_2,r_3,r_4,r_5}\in \widehat{U_k\bk{g}}$ if and only if
\begin{align*}
&\FNorm{x}\leq q \\
&\FNorm{r_1}, \FNorm{r_2}, \FNorm{r_3}, \FNorm{z}, \FNorm{r_1\bk{x-D_{\bk{a,b,c}}r_2+N_{\bk{a,b,c}}r_1}-r_2r_3} \leq q^{k-n} \\
&\FNorm{r_1r_3-r_2^2}, \FNorm{r_2\bk{x-D_{\bk{a,b,c}}r_2+N_{\bk{a,b,c}}r_1}-r_3^2} \leq q^{k-n} .
\end{align*}

\begin{itemize}




\item
\underline{$\FNorm{r_2}\leq q^{\frac{k-n}{2}}$:}
In this case , $u\bk{r_1,r_2,r_3,r_4,r_5}\in \widehat{U_k\bk{g}}$ if and only if
\begin{align*}
&\FNorm{x}\leq q \\
&\FNorm{r_1}, \FNorm{r_3} \leq q^{\frac{k-n}{2}} \\
&\FNorm{z} \leq q^{k-n} .
\end{align*}

\item
\underline{$q^{\frac{k-n}{2}}<\FNorm{r_2} < q^{k-n}$:}
In this case , $u\bk{r_1,r_2,r_3,r_4,r_5}\in \widehat{U_k\bk{g}}$ if and only if
\begin{align*}
&\FNorm{x} \leq q \\
&\FNorm{r_1} \leq q^{k-n-1} \\
&\FNorm{r_3}, \FNorm{z}, \FNorm{r_1\bk{N_{\bk{a,b,c}}r_1-D_{\bk{a,b,c}}r_2}-r_2r_3} \leq q^{k-n} \\
&\FNorm{r_1r_3-r_2^2}, \FNorm{r_2\bk{N_{\bk{a,b,c}}r_1-D_{\bk{a,b,c}}r_2}-r_3^2} \leq q^{k-n} .
\end{align*}


\item
\underline{$\FNorm{r_2}=q^{k-n}$:}
In this case , $u\bk{r_1,r_2,r_3,r_4,r_5}\in \widehat{U_k\bk{g}}$ if and only if
\begin{align*}
&\FNorm{x} \leq q \\
&\FNorm{r_1}, \FNorm{r_2}, \FNorm{r_3}, \FNorm{z}, \FNorm{r_1\bk{x-D_{\bk{a,b,c}}r_2+N_{\bk{a,b,c}}r_1}-r_2r_3} \leq q^{k-n} \\
&\FNorm{r_1r_3-r_2^2}, \FNorm{r_2\bk{x-D_{\bk{a,b,c}}r_2+N_{\bk{a,b,c}}r_1}-r_3^2} \leq q^{k-n} .
\end{align*}
First note that $\FNorm{r_1}=\FNorm{r_2}=\FNorm{r_3}=q^{k-n}$, hence we may write $\epsilon=\frac{r_2}{r_1}$ with $\FNorm{\epsilon}=1$.
Consider $\FNorm{r_1\bk{x-D_{\bk{a,b,c}}r_2+N_{\bk{a,b,c}}r_1}-r_2r_3} \leq q^{k-n}$, multiplying by $\epsilon^3$ and dividing by $r_2^2$ we have 
\[
\FNorm{\bk{\epsilon^3+D_{\bk{a,b,c}}\epsilon-N_{\bk{a,b,c}}}-\frac{\epsilon x}{r_2}} \leq \frac{1}{q^{k-n}}<1 .
\]
Since $E\rmod F$ is unramified and $\epsilon\in\mO$, it holds that $\FNorm{\epsilon^3+D_{\bk{a,b,c}}\epsilon-N_{\bk{a,b,c}}}=1$ and hence also $\FNorm{\frac{\epsilon x}{r_2}}=1$ contradicting the fact that $\FNorm{\frac{\epsilon x}{r_2}}\leq \frac{q}{q^{k-n}}<1$.

\end{itemize}

Let $l=\lfloor \frac{k-n}{2} \rfloor$.
Also, let $C\subset F^3$ be the set such that $\bk{r_1,r_2,r_3}\in C$ if and only if
\begin{align*}
&q^{\frac{k-n}{2}} < \FNorm{r_2} \leq q^{k-n-1} \\
&\FNorm{r_1} \leq q^{k-n-1} \\
&\FNorm{r_3}, \FNorm{r_1\bk{N_{\bk{a,b,c}}r_1-D_{\bk{a,b,c}}r_2}-r_2r_3} \leq q^{k-n} \\
&\FNorm{r_1r_3-r_2^2}, \FNorm{r_2\bk{N_{\bk{a,b,c}}r_1-D_{\bk{a,b,c}}r_2}-r_3^2} \leq q^{k-n} .
\end{align*}
It then holds that
\begin{align*}
E_k^{\Psi_E}\bk{g} = q^{k-n}\bk{q^{l+k-n}+\meas\bk{C}} \intl_{\FNorm{x}\leq q} \psi\bk{x}dx = 0 .
\end{align*}
\end{itemize}

\paragraph{\underline{Case of $m>2n$}:}
This was already treated in the discussion of the case $N_{\bk{a,b,c}}=0$.

\subsection{Non-Toral Elements}
In this section we consider the case of $g\notin UTK$.
Since $E_k^{\Psi_E}\in\M_{\Psi_E}$ it is sufficient to consider $g=h_\alpha(t_1)h_\beta(t_2)x_\alpha\bk{d}=x_\alpha\bk{p} h_\alpha(t_1)h_\beta(t_2)$, where $p=\frac{t_1^2 d}{t_2}$.
Let $\FNorm{t_1}=q^{-n}$, $\FNorm{t_2}=q^{-m}$ and $\FNorm{p}=q^l$.

\begin{Remark}
\begin{itemize}
\item
Let $E=F\times K$ and $\eta = w_\alpha h_\beta \bk{bc} \in G\bk{\mO}$.
Recall that according to {\bf (CT)} we assume that $a=0$ and $b,c\neq 0$.
Since $E_k\in \Hecke$, it holds that
\begin{align*}
E_k^{\Psi_{\bk{0,b,c}}}\bk{g} &= \intl_{U\bk{F}} E_k\bk{ug} \bk{\Psi_{\bk{0,b,c}}\bk{u}} du = \intl_{U\bk{F}} E_k\bk{\eta^{-1} ug \eta} \bk{\Psi_{\bk{0,b,c}}\bk{u}} du = \\
&=\intl_{U\bk{F}} E_k\bk{u'\eta^{-1} g\eta} \bk{\Psi_{\bk{0,b,c}}\bk{\eta u' \eta^{-1}}} du' =
\intl_{U\bk{F}} E_k\bk{u'g'} \bk{\Psi_{\bk{0,\frac{1}{b},\frac{1}{c}}}}\bk{u'} du' = E_k^{\Psi_{\bk{0,\frac{1}{b},\frac{1}{c}}}}\bk{g'},
\end{align*}
where $g'= x_{\alpha}\bk{p'} h_\alpha(\frac{bc t_2}{d t_1})h_\beta(t_2)$ and $p'=bc\frac{t_2}{dt_1^2}$.

\item
Let $E$ be a cubic field extension and $\eta = w_\alpha h_\beta \bk{abc} \in G\bk{\mO}$.
Recall that according to {\bf (CT)} we assume that $a,b,c\neq 0$.
As above, it holds that
\begin{align*}
E_k^{\Psi_E}\bk{g} &= E_k^{\Psi_{\bk{\frac{1}{a},\frac{1}{b},\frac{1}{c}}}}\bk{g'},
\end{align*}
where $g'= x_{\alpha}\bk{p'} h_\alpha(\frac{abc t_2}{d t_1})h_\beta(t_2)$ and $p'=abc \frac{t_2}{dt_1^2}$.
\end{itemize}

In both cases $\FNorm{p'}=\frac{1}{\FNorm{p}}$ and hence we may assume from now on that $\FNorm{p}\leq 1$ since $\bk{a,b,c}$ is arbitrary.
Therefore we assume for the rest of this section that $\FNorm{p}\leq 1$.
Also note that under this assumption, it follows that $\FNorm{\frac{t_1^2}{t_2}}<1$.

\end{Remark}



\begin{Remark}
From \cref{Lem: Conditions on m}, and under the assumption that $\FNorm{p}\leq 1$, we have $D_s^{\Psi_E}\bk{g}=0$ unless
\[
\frac{t_1^3}{t_2}, t_1, \frac{t_2}{t_1^3} \in \mO .
\]

Furthermore, $U_k\bk{g}=\emptyset$ unless $k\geq n$. We have $u\bk{r_1,r_2,r_3,r_4,r_5}\in U_k\bk{g}$ if and only if
\begin{align*}
&k\geq n,m-n \\
&\FNorm{r_1},\FNorm{r_2},\FNorm{r_3},\FNorm{r_2r_3+r_5},\FNorm{r_2^2-r_1r_3}\leq q^{k+n-m}\\
&\FNorm{pr_1-r_2},\FNorm{pr_2-r_3},\FNorm{pr_3-r_4},\FNorm{r_2r_4-r_3^2-pr_2r_3-pr_5}\leq q^{k-n}\\
&\FNorm{r_1r_4-2r_2r_3+pr_2^2-pr_1r_3-r_5}\leq q^{k-n} .
\end{align*}
\end{Remark}

\begin{Remark}
As an analogue of \cite[Lemma B.2]{MR3284482}, note that
\[
\intl_{U_k\bk{g}} \Psi_E\bk{u} du = \intl_{\widehat{U_k\bk{g}}} \Psi_E\bk{u} du,
\]
where
\[
\widehat{U_k\bk{g}} = 
\set{u\bk{r_1,r_2,r_3,r_4,r_5}\in U_k\bk{g} \mvert \FNorm{r_4+D_{\bk{a,b,c}}r_2-N_{\bk{a,b,c}}r_1}\leq q} .
\]
\end{Remark}

We now split the computation to two cases, $N_{\bk{a,b,c}}=0$ or $N_{\bk{a,b,c}}\in\mO^\times$.

\subsubsection{$N_{\bk{a,b,c}}=0$}
We write
\begin{align*}
x&=r_4+D_{\bk{a,b,c}}r_2 \\
z&=r_2r_3+r_5 .
\end{align*}
In this case, $u\bk{r_1,r_2,r_3,r_4,r_5}\in \widehat{U_k\bk{g}}$ if and only if
\begin{align*}
&\FNorm{x}\leq q \\
&\FNorm{r_1},\FNorm{r_2},\FNorm{r_3},\FNorm{z},\FNorm{r_2^2-r_1r_3}\leq q^{k+n-m}\\
&\FNorm{pr_1-r_2},\FNorm{pr_2-r_3},\FNorm{pr_3-x+D_{\bk{a,b,c}}r_2},\FNorm{r_2x-D_{\bk{a,b,c}}r_2^2-r_3^2-pz}\leq q^{k-n}\\
&\FNorm{r_1x-D_{\bk{a,b,c}}r_1r_2-r_2r_3+pr_2^2-pr_1r_3-z}\leq q^{k-n} .
\end{align*}

Let $\kappa=p^2+D_{\bk{a,b,c}}$.

\paragraph{\underline{Case of $l<0$}:}

\begin{itemize}
\item
\underline{$k=n$:}
In this case , $u\bk{r_1,r_2,r_3,r_4,r_5}\in \widehat{U_k\bk{g}}$ if and only if
\begin{align*}
&\FNorm{x}\leq q \\
&\FNorm{r_1},\FNorm{r_2},\FNorm{r_3},\FNorm{z},\FNorm{r_2^2-r_1r_3}\leq q^{2n-m}\\
&\FNorm{pr_1-r_2},\FNorm{pr_2-r_3},\FNorm{pr_3-x+D_{\bk{a,b,c}}r_2},\FNorm{r_2x-D_{\bk{a,b,c}}r_2^2-r_3^2-pz}\leq 1\\
&\FNorm{r_1x-D_{\bk{a,b,c}}r_1r_2-r_2r_3+pr_2^2-pr_1r_3-z}\leq 1 .
\end{align*}

\begin{itemize}
\item
\underline{$\FNorm{x}\leq 1$:}
We make a change of variables
\[
y=r_1x-D_{\bk{a,b,c}}r_1r_2-z
\]
and then $u\bk{r_1,r_2,r_3,r_4,r_5}\in \widehat{U_k\bk{g}}$ if and only if
\begin{align*}
&\FNorm{r_1}\leq q^{-l}\\
&\FNorm{r_2},\FNorm{r_3},\FNorm{y}\leq 1 .
\end{align*}

\item
\underline{$\FNorm{x}=q$:}
We make a change of variables
\begin{align*}
& y=r_1x-D_{\bk{a,b,c}}r_1r_2-r_2r_3+pr_2^2-pr_1r_3-z \\
& l_1=pr_1-r_2 \\
& l_2=D_{\bk{a,b,c}}r_2-x
\end{align*}
and then $u\bk{r_1,r_2,r_3,r_4,r_5}\in \widehat{U_k\bk{g}}$ if and only if
\begin{align*}
&\FNorm{l_1},\FNorm{l_2},\FNorm{r_3},\FNorm{y}\leq 1 .
\end{align*}
\end{itemize}

In conclusion
\[
E_n^{\Psi_E}\bk{g} = q^{-l} \intl_{\FNorm{x}\leq 1} \psi\bk{x} dx +q^{-l} \intl_{\FNorm{x}=q} \psi\bk{x} dx = 0.
\]

\item
\underline{$k>n$:}
In this case , $u\bk{r_1,r_2,r_3,r_4,r_5}\in \widehat{U_k\bk{g}}$ if and only if
\begin{align*}
&\FNorm{x}\leq q \\
&\FNorm{z},\FNorm{r_2^2-r_1r_3}\leq q^{k+n-m}\\
&\FNorm{r_1},\FNorm{r_2},\FNorm{r_3},\FNorm{r_2x-D_{\bk{a,b,c}}r_2^2-r_3^2-pz}\leq q^{k-n}\\
&\FNorm{r_1x-D_{\bk{a,b,c}}r_1r_2-r_2r_3+pr_2^2-pr_1r_3-z}\leq q^{k-n} .
\end{align*}

We make a change of variables
\begin{align*}
& y=r_1x-D_{\bk{a,b,c}}r_1r_2-r_2r_3+pr_2^2-pr_1r_3-z \\
& l_1=r_2-pr_1 \\
& l_2=pr_2-r_3 \\
& l_3=D_{\bk{a,b,c}}r_2+pr_3-x
\end{align*}
and then $u\bk{r_1,r_2,r_3,r_4,r_5}\in U_k\bk{g}$ if and only if
\begin{align*}
&\FNorm{x}\leq q \\
&\FNorm{p\bk{pl_2+l_3}\cdot l_2 + \bk{\kappa l_1+pl_2+l_3} l_3}\leq q^{k+n-m+l}\\
&\FNorm{p\bk{pl_2+l_3}^2-\bk{\kappa l_1+pl_2+l_3}\bk{pl_3-D_{\bk{a,b,c}}l_2}}\leq q^{k+n-m+l}\\
&\FNorm{l_1}, \FNorm{p l_2+l_3}, \FNorm{p l_3-D_{\bk{a,b,c}}l_2}, \FNorm{y}, \FNorm{l_1 l_3+l_2^2}\leq q^{k-n} .
\end{align*}

Let $C\subseteq F^3$ be the subset such that $\bk{l_1,l_2,l_3}\in C$ if and only if
\begin{align*}
&\FNorm{p\bk{p l_2+l_3}\cdot l_2 + \bk{\kappa l_1+p l_2+l_3} l_3}\leq q^{k+n-m+l}\\
&\FNorm{p\bk{p l_2+l_3}^2-\bk{\kappa l_1+p l_2+l_3}\bk{p l_3-D_{\bk{a,b,c}}l_2}}\leq q^{k+n-m+l}\\
&\FNorm{l_1}, \FNorm{p l_2+l_3}, \FNorm{p l_3-D_{\bk{a,b,c}} l_2}, \FNorm{l_1l_3+l_2^2}\leq q^{k-n} .
\end{align*}
Hence
\[
E_k^{\Psi_E}\bk{g} = q^{k-n} \meas\bk{C} \intl_{\FNorm{x}\leq q} \psi\bk{x} dx = 0 .
\]
\end{itemize}

\paragraph{\underline{Case of $l=0$}:}

\begin{itemize}
\item
\underline{$k=n$:}
We make a change of variables
\begin{align*}
y&=r_1x-D_{\bk{a,b,c}}r_1r_2-r_2r_3+pr_2^2-pr_1r_3-z \\
l_1&=p r_1-r_2 \\
l_2&=\kappa r_2-x \\
l_3&=r_3-p r_2 .
\end{align*}
In this case , $u\bk{r_1,r_2,r_3,r_4,r_5}\in \widehat{U_k\bk{g}}$ if and only if
\begin{align*}
&\FNorm{x}\leq q \\
&\FNorm{y}, \FNorm{l_1},\FNorm{l_2},\FNorm{l_3}\leq 1
\end{align*}
and hence
\[
E_n\bk{g}= \intl_{\FNorm{x}\leq q} \psi\bk{x} dx = 0 .
\]

\item
\underline{$k>n$:}
We make a change of variables
\begin{align*}
& y=r_1x-D_{\bk{a,b,c}}r_1r_2-r_2r_3+pr_2^2-pr_1r_3-z \\
& l_1=r_2-pr_1 \\
& l_2=pr_2-r_3 \\
& l_3=D_{\bk{a,b,c}}r_2+p r_3-x .
\end{align*}
In this case , $u\bk{r_1,r_2,r_3,r_4,r_5}\in \widehat{U_k\bk{g}}$ if and only if
\begin{align*}
&\FNorm{x}\leq q \\
&\FNorm{\frac{pl_2+l_3}{\kappa}\cdot l_2 + \bk{\frac{pl_2+l_3}{p\kappa}-\frac{l_1}{p}}\cdot l_3}\leq q^{k+n-m}\\
&\FNorm{\bk{\frac{p l_2+l_3}{\kappa}}^2-\bk{\frac{p l_2+l_3}{p\kappa}-\frac{l_1}{p}}\cdot \frac{p l_3-D_{\bk{a,b,c}}l_2}{\kappa}}\leq q^{k+n-m} \\
&\FNorm{\frac{p l_2+l_3}{p\kappa}-\frac{l_1}{p}},
\FNorm{\frac{p l_2+l_3}{\kappa}},
\FNorm{\frac{p l_3-D_{\bk{a,b,c}}l_2}{\kappa}}, \FNorm{l_1l_3+l_2^2},
\FNorm{y}\leq q^{k-n} .
\end{align*}

Let $C\subseteq F^3$ be the subset such that $\bk{l_1,l_2,l_3}\in C$ if and only if
\begin{align*}
&\FNorm{\frac{p l_2+l_3}{\kappa}\cdot l_2 + \bk{\frac{p l_2+l_3}{p \kappa}-\frac{l_1}{p }}\cdot l_3}\leq q^{k+n-m}\\
&\FNorm{\bk{\frac{p l_2+l_3}{\kappa}}^2-\bk{\frac{p l_2+l_3}{p\kappa}-\frac{l_1}{p}}\cdot \frac{p l_3-D_{\bk{a,b,c}}l_2}{\kappa}}\leq q^{k+n-m} \\
&\FNorm{\frac{pl_2+l_3}{p\kappa}-\frac{l_1}{p}},
\FNorm{\frac{p l_2+l_3}{\kappa}},
\FNorm{\frac{p l_3-D_{\bk{a,b,c}}l_2}{\kappa}}, \FNorm{l_1l_3+l_2^2}\leq q^{k-n} .
\end{align*}

Hence
\[
E_k^{\Psi_E}\bk{g} = q^{k-n} \meas\bk{C} \intl_{\FNorm{x}\leq q} \psi\bk{x} dx = 0 .
\]

\end{itemize}

\subsubsection{$N_{\bk{a,b,c}}\in\mO^\times$}
We write
\begin{align*}
x&=r_4+D_{\bk{a,b,c}}r_2-N_{\bk{a,b,c}}r_1 \\
z&=r_1r_4-2r_2r_3-r_5 .
\end{align*}


For any $k\geq n$, $u\bk{r_1,r_2,r_3,r_4,r_5}\in \widehat{U_k\bk{g}}$ if and only if
\begin{align*}
&\FNorm{x}\leq q \\
&\FNorm{r_1},\FNorm{r_2},\FNorm{r_3},\FNorm{z},\FNorm{r_2^2-r_1r_3}\leq q^{k+n-m}\\
&\FNorm{p r_1-r_2}, \FNorm{p r_2-r_3}, \FNorm{p r_3-\bk{x+N_{\bk{a,b,c}}r_1-D_{\bk{a,b,c}}r_2}}, \FNorm{r_2\bk{x+N_{\bk{a,b,c}}r_1-D_{\bk{a,b,c}}r_2}-r_3^2-pz}\leq q^{k-n}\\
&\FNorm{r_1\bk{x+N_{\bk{a,b,c}}r_1-D_{\bk{a,b,c}}r_2}-r_2r_3+p r_2^2-p r_1r_3-z}\leq q^{k-n} .
\end{align*}
Let $\kappa=p^3+D_{\bk{a,b,c}}p-N_{\bk{a,b,c}}\in \mO^\times$.

\begin{itemize}
\item
\underline{$k=n$:}
We note that
\[
\kappa r_1-x = p^2\bk{pr_1-r_2}+p\bk{pr_2-r_3}+\bk{pr_3-\bk{x+N_{\bk{a,b,c}}r_1-D_{\bk{a,b,c}}r_2}}
\]
Which means that $\FNorm{\kappa r_1-x}\leq 1$ and hence $\FNorm{r_1},\FNorm{r_2},\FNorm{r_3}\leq q$.
We make a change of variables
\begin{align*}
& y=r_1\bk{x+N_{\bk{a,b,c}}r_1-D_{\bk{a,b,c}}r_2-pr_3}-r_2\bk{pr_2-r_3}-z \\
& l_1=\kappa r_1-x \\
& l_2=r_2-pr_1 \\
& l_3=r_3-pr_2 .
\end{align*}
One checks that $u\bk{r_1,r_2,r_3,r_4,r_5}\in \widehat{U_k\bk{g}}$ if and only if
\begin{align*}
&\FNorm{x}\leq q \\
&\FNorm{\kappa^2l_2^2+p\kappa l_1l_2-\kappa l_1l_3}\leq q^{2n-m} \\
&\FNorm{l_1},\FNorm{l_2},\FNorm{l_3},\FNorm{y} \leq 1 .
\end{align*}

Let $C\subseteq F^3$ be the subset such that $\bk{l_1,l_2,l_3}\in C$ if and only if
\begin{align*}
&\FNorm{\kappa^2l_2^2+p\kappa l_1l_2-\kappa l_1l_3}\leq q^{2n-m} \\
&\FNorm{l_1},\FNorm{l_2},\FNorm{l_3} \leq 1 .
\end{align*}

Hence
\[
E_n^{\Psi_E}\bk{g} = \meas\bk{C} \intl_{\FNorm{x}\leq q} \psi\bk{x} dx = 0 .
\]

\item
\underline{$k>n$:}
We note that
\[
\kappa r_1-x = p^2\bk{pr_1-r_2}+p\bk{pr_2-r_3}+\bk{pr_3-\bk{x+N_E r_1-D_E r_2}}
\]
Which means that $\FNorm{\kappa r_1-x}\leq q^{k-n}$ and hence $\FNorm{r_1},\FNorm{r_2},\FNorm{r_3}\leq q^{k-n}$.
Moreover $\FNorm{br_3-\bk{x+N_{\bk{a,b,c}}r_1-D_{\bk{a,b,c}}r_2}}\leq q^{k-n}$ if and only if $\FNorm{\kappa r_1-x}\leq q^{k-n}$.


We make a change of variables
\begin{align*}
& y=r_1\bk{x-N_{\bk{a,b,c}}r_1-D_{\bk{a,b,c}}r_2-pr_3}-r_2\bk{p r_2-r_3}-z \\
& l_1=\kappa r_1-x \\
& l_2=r_2-p r_1 \\
& l_3=r_3-p r_2 .
\end{align*}
One checks that $u\bk{r_1,r_2,r_3,r_4,r_5}\in \widehat{U_k\bk{g}}$ if and only if
\begin{align*}
&\FNorm{x}\leq q \\
&\FNorm{\bk{\kappa l_2+pl_1}^2-l_1\bk{\kappa l_3+p\kappa l_2+p^2l_1}}, \FNorm{l_1\bk{l_1+\bk{p^2+D_{\bk{a,b,c}}}l_2+p l_3}+\bk{\kappa l_2+p l_1}l_3} \leq q^{k+n-m} \\
&\FNorm{l_1},\FNorm{l_2},\FNorm{l_3},\FNorm{y}, \FNorm{l_1\bk{l_1+\bk{p^2+D_{\bk{a,b,c}}}l_2+p l_3}-l_3^2}\leq q^{k-n} .
\end{align*}
Let $C\subseteq F^3$ be the subset such that $\bk{l_1,l_2,l_3}\in C$ if and only if
\begin{align*}
&\FNorm{l_1\bk{l_1+\bk{p^2+D_{\bk{a,b,c}}}l_2+p l_3}+\bk{\kappa l_2+p l_1}l_3} \leq q^{k+n-m} \\
&\FNorm{l_1},\FNorm{l_2},\FNorm{l_3}, \FNorm{l_1\bk{l_1+\bk{p^2+D_{\bk{a,b,c}}}l_2+p l_3}-l_3^2}\leq q^{k-n} .
\end{align*}

Hence
\[
E_k^{\Psi_E}\bk{g} = q^{k-n} \meas\bk{C} \intl_{\FNorm{x}\leq q} \psi\bk{x} dx = 0 .
\]
\end{itemize}

%
%
%
%
%
%
%

%
%
%
%


\bibliographystyle{abbrv}
\bibliography{bib}
\end{document}